\numberwithin{equation}{section}    
\newtheorem{theorem}{Theorem}[section]
\newtheorem{proposition}[theorem]{Proposition}
\newtheorem{corollary}[theorem]{Corollary}
\newtheorem{lemma}[theorem]{Lemma}
\theoremstyle{definition}
\newtheorem{definition}[theorem]{Definition}
\newtheorem{remark}[theorem]{Remark}
\newcommand{\absv}{\hspace*{0.1cm}\bigg|\hspace*{0.1cm}}
\renewcommand{\P}{\mathrm{P}}
\newcommand{\E}{\mathrm{E}}
\newcommand{\hh}{\hat{h}}
\newcommand{\hb}{\hat{\beta}}
\newcommand{\hcc}{\mathbf{h}_c^\alpha}
\newcommand{\fhc}{\mathbf{F}^\alpha}
\newcommand{\hd}{\hat{\delta}}
\newcommand{\tolaw}{\overset{(\mathrm{d})}{\longrightarrow}}
\newcommand{\btau}{\boldsymbol{\tau}^{\alpha}}
\newcommand{\Z}{\mathbf{Z}_{\hb,\hh}^{W,\mathtt{c}}}
\newcommand{\Zrho}{\mathbf{Z}_{\hb,\hh'}^{W,\mathtt{c}}}
\newcommand{\Zo}{\mathbf{Z}_{\hb,0}^{W,\mathtt{c}}}
\newcommand{\Zf}{\mathbf{Z}_{\hb,\hh}^{W}}
\newcommand{\Zcf}{\mathbf{Z}_{c^{\alpha-\frac{1}{2}}\hb,c^\alpha\hh}^{W}}
\newcommand{\Zfo}{\mathbf{Z}_{\hb,0}^{W}}
\newcommand{\Y}{\mathbf{Y}_{\hb}^{W,\mathtt{c}}}
\newcommand{\Yf}{\mathbf{Y}_{\hb}^{W}}
\newcommand{\tZ}{\mathtt{Z}}
\newcommand{\btZ}{\boldsymbol{\mathtt{Z}}}
\newcommand{\bPsi}{\mathbf{\Psi}}
\newcommand{\Zn}{\mathrm{Z}_{\beta_N,h_N}^{\omega,\mathtt{c}}}
\newcommand{\Zno}{\mathrm{Z}_{\beta_N,0}^{\omega,\mathtt{c}}}
\newcommand{\Znf}{\mathrm{Z}_{\beta_N,h_N}^{\omega}}
\newcommand{\Znt}{\mathrm{Z}_{\beta_N,h_N'}^{\omega,\mathtt{c}}}
\newcommand{\dd}{\mathrm{d}}
\newcommand{\sso}{\mathtt{s}}
\newcommand{\tto}{\mathtt{t}}
\newcommand{\mmo}{\mathtt{m}_t}
\newcommand{\JJo}{\mathtt{J}}
\renewcommand{\ss}{\boldsymbol{\mathtt{s}}}
\renewcommand{\tt}{\boldsymbol{\mathtt{t}}}
\newcommand{\mm}{\boldsymbol{\mathtt{m}}_t}
\newcommand{\JJ}{\boldsymbol{\mathtt{J}}}
\newcommand{\cF}{\mathcal{F}}
\newcommand{\rss}{{\mathtt{s}}^{(N)}}
\newcommand{\rtt}{{\mathtt{t}}^{(N)}}
\newcommand{\rmm}{{\mathtt{m}}_t^{(N)}}
\newcommand{\rJJ}{{\mathtt{J}}^{(N)}}
\newcommand{\dss}{\boldsymbol{\mathtt{s}}^{(N)}}
\newcommand{\dtt}{\boldsymbol{\mathtt{t}}^{(N)}}
\newcommand{\dmm}{\boldsymbol{\mathtt{m}}_t^{(N)}}
\newcommand{\gne}{g_{N,\epsilon}}
\newcommand{\sumtwo}[2]{\sum_{\substack{#1 \\ #2}}} 
\newcommand{\RN}{\mathrm{R}}
\title{Universality for the pinning model in the weak coupling regime}
\author{Francesco Caravenna}
\address{Dipartimento di Matematica e Applicazioni, 
Universit\`a degli Studi di Milano-Bicocca,
via Cozzi 55,
20125 Milano, Italy}
\email{francesco.caravenna@unimib.it}
\author{Fabio Toninelli}
\address{Universit\'e de Lyon, CNRS, Institut Camille Jordan,
  Universit\'e Claude Bernard Lyon 1, 43 bd du 11 novembre 1918, 69622 Villeurbanne cedex, France
}
\email{toninelli@math.univ-lyon1.fr}
\author{Niccolò Torri}
\address{Universit\'e de Lyon, Institut Camille Jordan,
  Universit\'e Claude Bernard Lyon 1, 43 bd du 11 novembre 1918, 69622 Villeurbanne cedex, France
 }
\email{torri@math.univ-lyon1.fr}
\keywords{Scaling limit; Disorder relevance; Weak disorder; Pinning model; Random polymer; Universality; Free energy; Critical curve; Coarse-graining}
\subjclass[2010]{Primary 82B44; Secondary 82D60; 60K35}
\thanks{This work was supported by the \emph{Programme Avenir Lyon 
Saint-Etienne de l'Université de Lyon} (ANR-11-IDEX-0007), within 
the program \emph{Investissements d'Avenir} operated by the French
National Research Agency (ANR). F. T. was partially supported by the Marie Curie IEF Action “DMCP- Dimers, Markov chains and Critical Phenomena”, grant agreement n. 621894.
}
\newenvironment{myenumerate}{%
\renewcommand{\theenumi}{\arabic{enumi}}%
\renewcommand{\labelenumi}{{\rm(\theenumi)}}%
\begin{list}{\labelenumi}
	{%
	\setlength{\itemsep}{0.4em}%
	\setlength{\topsep}{0.5em}%
	\setlength\leftmargin{2.45em}%
	\setlength\labelwidth{2.05em}%
	\setlength{\labelsep}{0.4em}%
	\usecounter{enumi}%
	}%
	}%
{\end{list}
}
\newenvironment{myitemize}{%
\begin{list}{$\bullet$}%
 	{%
	\setlength{\itemsep}{0.4em}%
	\setlength{\topsep}{0.5em}%
	\setlength\leftmargin{2.45em}%
	\setlength\labelwidth{2.05em}%
	\setlength{\labelsep}{0.4em}%
	}%
	}%
{\end{list}}
\renewenvironment{itemize}{
\begin{myitemize}}%
{\end{myitemize}}
\newcommand{\N}{\mathbb{N}}
\newcommand{\R}{\mathbb{R}}
\newcommand{\bbP}{\mathbb{P}}
\newcommand{\bbE}{\mathbb{E}}
\newcommand{\ind}{\mathds{1}}
\begin{document}

\begin{abstract}
We consider disordered pinning models,
when the return time distribution of the underlying renewal process
has a polynomial tail with exponent $\alpha \in (\frac{1}{2},1)$.
This corresponds to a regime where disorder is known to be
  \emph{relevant}, i.e. to change the critical exponent of the
  localization transition and to induce a non-trivial shift of the
  critical point.
We show that the free energy and critical curve have an
explicit universal asymptotic behavior in the weak coupling regime,
depending only on the tail of the return time distribution
and not on finer details of the models. 
This is obtained comparing the partition functions 
with corresponding continuum quantities, through coarse-graining
techniques.
\end{abstract}

\maketitle

\section{Introduction and motivation}

Understanding the effect of disorder is a key topic
in statistical mechanics, dating back at least to the seminal work of Harris~\cite{H74}.
For models that are disorder relevant, i.e.\ for which an arbitrary amount of disorder
modifies the critical properties, it was recently shown in \cite{CRZ13} that it is interesting to
look at a suitable \emph{continuum and weak disorder regime},
tuning the disorder strength to zero as the size of the system diverges, which leads to
a \emph{continuum model} in which disorder is still present. This framework includes
many interesting models, including the 2d random field Ising model with site disorder,
the disordered pinning model and the directed polymer in random environment
(which was previously considered by Alberts, Quastel and Khanin
\cite{AKQ14a,AKQ14b}).

Heuristically, a continuum model should capture the properties of a large family
of discrete models, leading to sharp predictions about the scaling behavior of key quantities,
such free energy and critical curve, in the weak disorder regime. The goal of this paper
is to make this statement rigorous in the context of disordered 
pinning models \cite{GB07,GB10,FdH07},
sharpening the available estimates in the literature and proving a form of universality.
Although we stick to pinning models, the main ideas have a general value and should 
be applicable to other models as well.

%

In this section we give a concise description
of our results, focusing on the critical curve.
Our complete results are presented in the next section.
Throughout the paper we use the conventions $\N = \{1,2,3,\ldots\}$ and $\N_0 =
\N \cup \{0\}$, and we write $a_n \sim b_n$ to mean $\lim_{n\to\infty} a_n/b_n = 1$.

\medskip

To build a disordered pinning model,
we take a Markov chain $(S=(S_n)_{n\in\mathbb{N}_0}, \P)$
starting at a distinguished state, called $0$,
and we modify its distribution by rewarding/penalizing each visit to $0$.
The rewards/penalties are determined by a sequence of  i.i.d.\ real random variables 
$(\omega=(\omega_{n})_{n\in\mathbb{N}},\mathbb{P})$, independent of $S$,
called \emph{disorder variables} (or \emph{charges}).
We make the following assumptions.
\begin{itemize}
\item The return time to $0$ of the Markov chain
$\tau_1 := \min\{n\in\N: \ S_n = 0\}$ satisfies
\begin{equation}\label{assRP0}
\P\left( \tau_1 < \infty \right) = 1 , \qquad
K(n):=\P\left(\tau_1 = n \right)\sim \frac{L(n)}{n^{1+\alpha}}, \quad n\to\infty,
\end{equation}
where $\alpha \in (0,\infty)$
and $L(n)$ is a slowly varying function \cite{BGT89}.
For simplicity we assume that $K(n)>0$ for all $n\in\mathbb{N}$,
but periodicity can be easily dealt with (e.g.\ $K(n) > 0$ iff $n \in 2\N$).

\item The disorder variables have locally finite exponential moments:
\begin{equation}\label{assD1}
\exists \beta_0>0 :\
\Lambda(\beta):=\log\mathbb{E}(e^{\beta\omega_1})<\infty,\ \forall \beta \in (-\beta_0,\beta_0),
\qquad \mathbb{E}(\omega_1)=0,\qquad \mathbb{V}(\omega_1)=1 \,,
\end{equation}
where the choice of zero mean and unit variance is just a convenient normalization.

\end{itemize}
Given a $\bbP$-typical realization of the sequence $\omega = (\omega_n)_{n\in\N}$,
the \emph{pinning model} is defined as the following
random probability law $\mathrm{P}_{\beta, h, N}^{\omega}$ on Markov chain paths $S$:
\begin{equation}\label{eq1intro}
\frac{\mathrm{d}\P_{\beta,h, N}^{\omega}}{\mathrm{d}\mathrm{P}}(S)
:=\frac{e^{\sum_{n=1}^N (\beta \omega_n - \Lambda(\beta) + h) \ind_{\{S_n = 0\}}}}
{\mathrm{Z}_{\beta,h}^{\omega}(N)} , \qquad
\mathrm{Z}_{\beta,h}^{\omega}(N) := 
\E \Big[ e^{\sum_{n=1}^N (\beta \omega_n - \Lambda(\beta) + h) \ind_{\{S_n = 0\}}} \Big] \,,
\end{equation}
where $N \in \N$ represents the ``system size'' while
$\beta \ge 0$ and $h\in\R$ tune the disorder strength and bias.
(The factor $\Lambda(\beta)$ in \eqref{eq1intro} is just a translation of $h$,
introduced so that
$\mathbb{E}[e^{\beta\omega_n-\Lambda(\beta)}]=1$.)

\smallskip

Fixing $\beta \ge 0$ and varying $h$, the pinning model undergoes a
localization/delocalization \emph{phase transition} at a critical
value $h_c(\beta) \in \R$: the typical paths $S$ under $\P_{\beta,h,N}^\omega$ 
are localized at $0$ for $h > h_c(\beta)$, while
they are delocalized away from $0$ for $h < h_c(\beta)$
(see \eqref{eq:locdeloc} below for a precise result).

It is known that $h_c(\cdot)$ is a continuous function,
with $h_c(0) = 0$
(note that for $\beta = 0$ the disorder $\omega$ disappears in \eqref{eq1intro}
and one is left with a homogeneous model, which is exactly solvable).
The behavior of $h_c(\beta)$ as $\beta \to 0$
has been investigated in depth \cite{GLT10, AZ09, DGLT09,A08,ChedH},
confirming the so-called \emph{Harris criterion} \cite{H74}: recalling that 
$\alpha$ is the tail exponent in \eqref{assRP0}, it was shown that:
\begin{itemize}
\item for $\alpha < \frac{1}{2}$ one  has $h_c(\beta) \equiv 0$ for $\beta > 0$ 
small enough (irrelevant disorder regime);

\item for $\alpha > \frac{1}{2}$, on the other hand, one has
  $h_c(\beta) > 0$ for all $\beta > 0$.  Moreover, it was proven  \cite{GTsmooth} that
  disorder changes the order of the phase transition: 
  free energy vanishes for $h\downarrow h_c(\beta)$ at least as fast as
  $(h-h_c(\beta))^2$, while for $\beta=0$ the critical exponent is $\max(1/\alpha,1)<2$.
This case is therefore called 
\emph{relevant disorder regime};

\item for $\alpha = \frac{1}{2}$, known as the ``marginal'' case,
the answer depends on the slowly varying function $L(\cdot)$ in \eqref{assRP0}: 
more precisely one has disorder relevance 
if and only if $\sum_n \frac{1}{n\, (L(n))^2}=\infty$, as recently
proved in \cite{BL15}
(see also \cite{A08,GLT10,GLT11} for previous partial results).

\end{itemize}

In the special case $\alpha>1$, when the mean return time $\E[\tau_1]$ is finite,
one has (cf. \cite{BCPSZ14}) 
\begin{equation} \label{eq:alpha>1}
\lim\limits_{\beta\to 0}\,\frac{h_c(\beta)}{\beta^{2}}
= \frac{1}{2\E[\tau_1]} \frac{\alpha}{1+\alpha} \,.
\end{equation}
In this paper we focus on the case $\alpha \in (\frac{1}{2},1)$, where
the mean return time is infinite: $\E[\tau_1]= \infty$.
In this case,
the precise asymptotic behavior of $h_c(\beta)$ as $\beta \to 0$ was known only up
to non-matching constants, cf.\ \cite{AZ09,DGLT09}:
there is a slowly varying function $\tilde{L}_\alpha$ (determined explicitly by $L$ and $\alpha$)
and constants $0 < c < C < \infty$
such that for $\beta>0$ small enough
\begin{equation}\label{eq:known}
c \, \tilde{L}_\alpha\big(\tfrac{1}{\beta}\big) \,
\beta^{\frac{2\alpha}{2\alpha-1}}\leq h_c(\beta)\leq 
C \, \tilde{L}_\alpha \big( \tfrac{1}{\beta} \big) \, \beta^{\frac{2\alpha}{2\alpha-1}} .
\end{equation}
Our key result (Theorem~\ref{Thm2} below)
shows that this relation can be made sharp:
there exists $m_{\alpha} \in (0,\infty)$ such that,
under mild assumptions on the return time and disorder distributions,
\begin{equation}\label{Conj1}
\lim\limits_{\beta\to 0}\,\frac{h_c(\beta)}{\tilde{L}_\alpha(\frac{1}{\beta})
\, \beta^{\frac{2\alpha}{2\alpha-1}}}=m_{\alpha} .
\end{equation}

Let us stress the \emph{universality} value of \eqref{Conj1}:
the asymptotic behavior of $h_c(\beta)$ as $\beta \to 0$ depends
only on the \emph{tail} of the return time distribution $K(n) = \P(\tau_1 = n)$,
through the exponent $\alpha$ and the slowly varying function $L$ appearing in \eqref{assRP0}
(which determine $\tilde L_\alpha$): all finer details of $K(n)$
beyond these key features disappear in the weak disorder regime.
The same holds for the disorder variables:
any admissible distribution for $\omega_1$ has the same effect on the asymptotic behavior
of $h_c(\beta)$.

Unlike \eqref{eq:alpha>1},
we do not know the explicit value of the limiting constant
$m_\alpha$ in \eqref{Conj1}, but we can characterize it as the critical parameter
of the \emph{continuum disordered pinning model} (CDPM)
recently introduced in \cite{CRZ14,CRZ13}.
The core of our approach is a precise quantitative comparison
between discrete pinning models and the CDPM,
or more precisely between the corresponding partition functions,
based on a subtle \emph{coarse-graining} procedure which
extends the one developed in \cite{BdH97,CG10} for the copolymer model.
This extension turns out to be quite subtle, because
unlike the copolymer case 
\emph{the CDPM admits no ``continuum Hamiltonian''}:
although it is built over the $\alpha$-stable regenerative set
(which is the continuum limit of renewal processes satisfying \eqref{assRP0},
see \S\ref{closedsetsec}), its law is \emph{not} absolutely continuous with respect to the law
of the regenerative set, cf.~\cite{CRZ14}. As a consequence, we need to introduce a suitable
coarse-grained Hamiltonian, based on partition functions, which behaves well in the continuum limit.
This extension of the coarse-graining procedure
is of independent interest and should be applicable to other models 
with no ``continuum Hamiltonian'', including the directed
polymer in random environment \cite{AKQ14b}.

Overall, our results reinforce the role of the CDPM as a universal model,
capturing the key properties of discrete pinning models in the weak coupling regime.

\section{Main results}


\subsection{Pinning model revisited}

The disordered pinning model $\P_{\beta,h,N}^\omega$ was defined 
in \eqref{eq1intro} as a perturbation of a Markov chain $S$.
Since the interaction only takes place when $S_n = 0$, it is customary to forget about the full
Markov chain path, focusing only on its zero level set
\begin{equation*}
	\tau = \{n\in\N_0: \ S_n = 0\} ,
\end{equation*}
that we look at as a random subset of $\N_0$.
Denoting by $0 = \tau_0 < \tau_1 < \tau_2 < \ldots$ the points of $\tau$,
we have a \emph{renewal process} $(\tau_k)_{k\in\N_0}$,
i.e.\ the random variables $(\tau_j - \tau_{j-1})_{j\in\N}$
are i.i.d. with values in $\N$. Note that we have the equality $\{S_n = 0\} = \{n\in\tau\}$,
where we use the shorthand
\begin{equation*}
	\{n \in \tau\} := \bigcup_{k\in\N_0} \{ \tau_k = n\}.
\end{equation*}
Consequently, viewing the pinning model $\P_{\beta,h, N}^{\omega}$
as a law for $\tau$, we can rewrite \eqref{eq1intro} as follows:
\begin{equation}\label{model}
\frac{\mathrm{d}\P_{\beta,h, N}^{\omega}}{\mathrm{d}\mathrm{P}}( \tau)
:=\frac{e^{\sum_{n=1}^N (\beta \omega_n - \Lambda(\beta) + h) \ind_{\{n \in \tau\}}}}
{\mathrm{Z}_{\beta,h}^{\omega}(N)} , \qquad
\mathrm{Z}_{\beta,h}^{\omega}(N) := 
\E \Big[ e^{\sum_{n=1}^N (\beta \omega_n - \Lambda(\beta) + h) \ind_{\{n \in \tau\}}} \Big] \,.
\end{equation}
To summarize, henceforth we fix a renewal process $(\tau = (\tau_k)_{k\in\N_0}, \P)$
satisfying \eqref{assRP0} and an i.i.d.\ sequence of
disorder variables $(\omega = (\omega_n)_{n\in\N}, \bbP)$ satisfying \eqref{assD1}.
We then define the disordered pinning model as the random probability
law $\P_{\beta,h, N}^{\omega}$ for $\tau$ defined in \eqref{model}. 


\smallskip

In order to prove our results, we need some additional assumptions.
We recall that for any renewal process satisfying \eqref{assRP0}
with $\alpha\in (0,1)$, the following local
renewal theorem holds \cite{GL63,D97}:
\begin{equation}\label{assRP1}
u(n) := \P(n\in\tau)
\sim  \frac{C_\alpha}{L(n) \, n^{1-\alpha}}, \quad n\to\infty, \qquad
\text{with} \ \ C_\alpha := \frac{\alpha\sin(\alpha\pi)}{\pi} \,.
\end{equation}  
In particular, if $\ell=o(n)$, then
$u(n+\ell)/u(n)\to 1$ as $n\to\infty$. 
We are going to assume that this convergence
takes place at a not too slow rate, i.e.\ at least a power law of $\frac{\ell}{n}$,
as in \cite[eq. (1.7)]{CRZ14}:
\begin{equation}\label{assRP2}
\exists\, C, n_0\in (0,\infty);\, \epsilon,\delta\in (0,1]:\quad \left|\frac{u(n+\ell)}{u(n)}-1\right|
\leq C\left(\frac{\ell}{n}\right)^\delta,\quad \forall n\geq n_0,\, 0\leq \ell\leq \epsilon n.
\end{equation}

\begin{remark}\label{rem:mild}\rm
This is a mild assumption, as discussed in \cite[Appendix B]{CRZ14}.
For instance, one can build a wide family of nearest-neighbor Markov 
chains on $\mathbb{N}_0$ with $\pm 1$ increments (Bessel-like random walks)
satisfying \eqref{assRP0}, cf.\ \cite{A11}, and in this case \eqref{assRP2} 
holds for any $\delta<\alpha$.
\end{remark}

Concerning the disorder distribution, we strengthen the
finite exponential moment assumption \eqref{assD1}, requiring the following
concentration inequality:
\begin{equation}\label{assD2}
\begin{split}
	\exists \gamma>1, C_1, C_2 \in (0,\infty): \text{ for all $n\in\N$ and for all } f: \R^n \to \R
	\text{ convex and 	$1$-Lipschitz} \\
	\bbP \Big( \big|
	f(\omega_1, \ldots, \omega_n) - M_f \big| 
	\ge t\Big) \le 
	C_1 \exp \bigg(-\frac{t^\gamma}{C_2}\bigg) \,, \qquad \quad
\end{split}
\end{equation}
where $1$-Lipschitz means $|f(x)-f(y)| \le |x-y|$ for all $x,y\in\R^n$,
with $|\cdot|$ the usual Euclidean norm, and $M_f$ denotes a median of
$f(\omega_1, \ldots, \omega_n)$. (One can equivalently take $M_f$ to be
the mean $\bbE[f(\omega_1, \ldots, \omega_n)]$
just by changing the constants $C_1, C_2$, cf.\ \cite[Proposition 1.8]{L05}.)

It is known that
\eqref{assD2} holds under fairly general assumptions, namely:
\begin{itemize}
\item ($\gamma=2$) \
if $\omega_1$ is bounded, i.e.\ $\bbP(|\omega_1| \le a) = 1$ for some $a \in (0,\infty)$,
cf.\ \cite[Corollary 4.10]{L05};

\item ($\gamma=2$) \ if the law of $\omega_1$ satisfies a log-Sobolev
inequality, in particular if $\omega_1$
is Gaussian, cf.\ \cite[Theorems~5.3 and Corollary~5.7]{L05}; more generally, if 
the law of $\omega_1$ is absolutely continuous with density $\exp(-U - V)$, where $U$ is uniformly
strictly convex (i.e. $U(x) - c x^2$ is convex, for some $c>0$) and $V$ is bounded,
cf.\ \cite[Theorems 5.2 and Proposition 5.5]{L05};

\item ($\gamma \in (1,2)$) \ if the law of $\omega_1$ is absolutely continuous with density 
given by
$c_\gamma \, e^{-|x|^\gamma}$
(see Propositions~4.18 and~4.19 in \cite{L05} and the following considerations).
\end{itemize}

%

\subsection{Free energy and critical curve}

The normalization constant $\mathrm{Z}_{\beta,h}^{\omega}(N)$ in \eqref{model} is called 
\emph{partition function} and plays a key role.
Its rate of exponential growth as $N \to \infty$
is called \emph{free energy}:
\begin{equation}\label{DFE}
\mathrm{F}(\beta,h)
:=\lim_{N\to\infty}\frac{1}{N} \log\mathrm{Z}_{\beta,h}^{\omega}(N)
=\lim_{N\to\infty}\frac{1}{N}\mathbb{E} \Big[ \log\mathrm{Z}_{\beta,h}^{\omega}(N)\Big],
\qquad \bbP\text{-a.s. \ and in \ } L^1,
\end{equation}
where the limit exists and is finite by super-additive arguments \cite{GB07,FdH07}. 
Let us stress that $\mathrm{F}(\beta,h)$ 
depends on the laws of the renewal process $\P(\tau_1 = n)$
and of the disorder variables $\bbP(\omega_1 \in \dd x)$, but it does not depend on
the $\bbP$-typical realization of the sequence $(\omega_n)_{n\in\N}$.
Also note that $h \mapsto \mathrm{F}(\beta,h)$ 
inherits from $h \mapsto \log\mathrm{Z}_{\beta,h}^{\omega}(N)$ the properties of being
convex and non-decreasing.

Restricting the expectation defining $\mathrm{Z}_{\beta,h}^{\omega}(N)$ 
to the event $\{\tau_1>N\}$ and recalling
the polynomial tail assumption \eqref{assRP0}, one obtains the basic but crucial
inequality
\begin{equation}\label{pinn3}
\mathrm{F}(\beta,h)\geq 0\quad \forall \beta \ge 0, h\in\mathbb{R}.
\end{equation}
One then defines the \emph{critical curve} by
\begin{equation}\label{eq:hc}
	h_c(\beta) := \sup\{h \in \R : \ \mathrm{F}(\beta,h)=0\} .
\end{equation}
It can be shown that $0 < h_c(\beta) < \infty$ for $\beta > 0$,
and by monotonicity and continuity in $h$ one has
\begin{equation}\label{eq:><}
	\mathrm{F}(\beta,h) = 0 \ \ \text{if} \ \ h \le h_c(\beta), \qquad
	\mathrm{F}(\beta,h) > 0 \ \ \text{if} \ \ h > h_c(\beta) .
\end{equation}
In particular, the
function $h \mapsto \mathrm{F}(\beta,h)$ is non-analytic at the point $h_c(\beta)$,
which is called a \emph{phase transition} point.
A probabilistic interpretation can be given looking at the quantity
\begin{equation} \label{eq:ellN}
	\ell_N := \sum_{n=1}^N \ind_{\{n\in\tau\}}
	= \big| \tau \cap (0, N] \big|	 ,
\end{equation}
which represents the number of points of 
$\tau \cap (0,N]$. By convexity, $h \mapsto \mathrm{F}(\beta,h)$
is differentiable at all but a countable number of points, and for pinning models
it can be shown that it is actually $C^\infty$ for $h \ne h_c(\beta)$
 \cite{alea}.
Interchanging differentiation and limit in \eqref{DFE}, by convexity,
relation \eqref{model} yields
\begin{equation} \label{eq:locdeloc}
	\text{for $\bbP$-a.e. $\omega$}, \qquad 
	\lim_{N\to\infty}  \E_{\beta,h, N}^{\omega} \bigg[
	\frac{\ell_N}{N} \bigg] = \frac{\partial \mathrm{F}(\beta,h)}{\partial h} \
	\begin{cases}
	= 0 & \text{if } h < h_c(\beta) \\
	> 0 & \text{if } h > h_c(\beta) 
	\end{cases} \,.
\end{equation}
This shows that the typical paths of the pinning model
are indeed localized at $0$ for $h > h_c(\beta)$
and delocalized away from $0$ for $h < h_c(\beta)$.\footnote{Note that,
in Markov chain terms, $\ell_N$ is the number 
of visits of $S$ to the state $0$, up to time $N$.}
We refer to \cite{GB07,GB10,FdH07} for details and for finer results.

\subsection{Main results}

Our goal is to study the asymptotic behavior of the
free energy $\mathrm{F}(\beta,h)$ and critical curve
$h_c(\beta)$ in the weak coupling regime $\beta, h \to 0$.

Let us recall the recent results in \cite{CRZ13,CRZ14},
which are the starting point of our analysis.
Consider any disordered pinning model where the renewal process satisfies
\eqref{assRP0}, with $\alpha \in (\frac{1}{2},1)$,
and the disorder satisfies \eqref{assD1}.
If we let $N \to \infty$ and simultaneously
$\beta \to 0$, $h \to 0$ as follows:
\begin{align}\label{eqParam}
	\beta = \beta_N := \hb \frac{L(N)}{N^{\alpha - \frac{1}{2}}},
	\qquad 
	h = h_N := \hh \frac{L(N)}{N^{\alpha}} , \qquad
	\text{for fixed } \hb > 0, \ \hh\in \R \,,
\end{align} 
the family of partition functions $\mathrm{Z}_{\beta_N,h_N}^{\omega}(Nt )$, with $t \in [0,\infty)$,
has a universal limit,
in the sense of finite-dimensional distributions \cite[Theorem 3.1]{CRZ13}:
\begin{equation} \label{eq:convd}
\Big( \Znf(Nt) \Big)_{t \in [0,\infty)} \tolaw
\Big( \Zf(t) \Big)_{t \in [0,\infty)}, \quad N\to\infty .
\end{equation}
The \emph{continuum partition function} $\Zf(t)$
depends only on the exponent
$\alpha$ 
and on a Brownian motion $(W = (W_t)_{t\ge 0}, \bbP)$,
playing the role of continuum disorder. We point out that $\Zf(t)$ has an
explicit Wiener chaos representation, as a series of deterministic and stochastic
integrals (see \eqref{eq:Wiener} below), and admits a version which is continuous
in $t$, that we fix henceforth (see \S\ref{sec:further} for more details).


\begin{remark}\rm\label{rem:hb}
For an intuitive explanation of why $\beta_N, h_N$ should scale as in \eqref{eqParam},
we refer to the discussion following Theorem~1.3 in \cite{CRZ14}.
Alternatively, one can invert the relations in \eqref{eqParam},
for simplicity in the case $\hb = 1$,
expressing $N$ and $h$ as a function of $\beta$ as follows:
\begin{equation} \label{eq:hb}
	\frac{1}{N} \sim \tilde L_\alpha(\tfrac{1}{\beta})^2 \,
	\beta^{\frac{2}{2\alpha-1}}, 
	\qquad
	h \sim \hh \, \tilde L_\alpha(\tfrac{1}{\beta}) \, \beta^{\frac{2\alpha}{2\alpha-1}} \,,
\end{equation}
where $\tilde L_\alpha$ is the same slowly varying function appearing in \eqref{eq:known},
determined explicitly by $L$ and $\alpha$.
Thus $h = h_N$ is of the same order as the critical curve
$h_c(\beta_N)$, which is quite a natural choice.

More precisely, one has $\tilde L_\alpha(x) = M^\#(x)^{-\frac{1}{2\alpha-1}}$,
where $M^\#$ is the \emph{de Bruijn conjugate} of 
the slowly varying function 
$M(x) := 1 / L(x^{\frac{2}{2\alpha-1}})$, cf.\ \cite[Theorem 1.5.13]{BGT89},
defined by the asymptotic property
$M^\#(x M(x)) \sim 1/M(x)$.
We refer to (3.17) in \cite{CRZ13} and the following lines for more details.

\end{remark}

It is natural to define a \emph{continuum free energy} ${\fhc}(\hb,\hh)$
in terms of $\Zf(t)$, in analogy with \eqref{DFE}. 
Our first result ensures the existence
of such a quantity along $t\in\N$, if we average over the disorder.
One can also show the existence of such limit,
without restrictions on $t$,  in the $\bbP(\dd W)$-a.s.\ and $L^1$ senses:
we refer to \cite{cf:T15} for a proof.

\begin{theorem}[Continuum free energy]\label{Thm1}
For all $\alpha \in (\frac{1}{2},1)$, $\hb > 0$, $\hh \in \R$
the following limit exists and is finite:
\begin{align}\label{CFcE}
{\fhc}(\hb,\hh) :=  
\lim_{t\to\infty, \, t\in\N} \, \frac{1}{t} \, \bbE \Big[ \log \Zf(t) \Big]  \,.
\end{align}
The function ${\fhc}(\hb,\hh)$ is non-negative:
${\fhc}(\hb,\hh) \ge 0$ for all $\hb > 0$, $\hh \in \R$. Furthermore,
it is a convex function of $\hh$, for fixed $\hb$, and satisfies
the following scaling relation:
\begin{equation}\label{eq:scalingfe}
	{\fhc}(c^{\alpha-\frac{1}{2}} \hb,c^{\alpha}\hh) = 
	c \, {\fhc}(\hb,\hh) \,, \qquad \forall
	\hb > 0, \ \hh \in \R, \ c \in (0,\infty) \,.
\end{equation}
\end{theorem}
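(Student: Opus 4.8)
\textbf{Proof strategy for Theorem~\ref{Thm1}.} The plan is to establish \eqref{CFcE} by a super-additivity argument for the sequence $a_t := \bbE[\log \Zf(t)]$, $t \in \N$, mimicking the discrete proof of \eqref{DFE} but adapted to the continuum partition function. The key structural input is a Markov-type decomposition of $\Zf$ at an intermediate time: for $0 < s < t$ one expects a relation of the form $\Zf(t) \ge (\text{contribution of configurations with a renewal point near } s)$, which factorizes (in law, after conditioning on the Brownian increments on $[0,s]$ and $[s,t]$) into a product of two independent copies of continuum partition functions of ``sizes'' $s$ and $t-s$, up to a correction term. Concretely, using the Wiener chaos representation \eqref{eq:Wiener} of $\Zf$ together with the regenerative structure of the $\alpha$-stable set underlying the CDPM, one writes $\Zf(t) = \int \Zf_{[0,u]}\, (\text{last renewal at } u)\, \Zf_{[u,t]}\,\dd(\cdots)$ and bounds below by restricting $u$ to a neighbourhood of $s$; taking logarithms and expectations, and using Jensen where needed, yields $a_t \ge a_s + a_{t-s} - C$ for a constant $C$ independent of $s,t$. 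Then Fekete's lemma gives existence of the limit $\lim_{t\to\infty,\,t\in\N} a_t/t \in [-\infty,\infty)$; finiteness from above follows from an $L^1$ upper bound on $\log \Zf(t)$ (e.g.\ $\bbE[\log \Zf(t)] \le \log \bbE[\Zf(t)]$ by Jensen, and $\bbE[\Zf(t)]$ grows at most exponentially, which can be read off from the chaos expansion or from the convergence \eqref{eq:convd} combined with the corresponding discrete bound).

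\textbf{Non-negativity.} For ${\fhc}(\hb,\hh) \ge 0$ the natural route is to pass to the limit in the discrete inequality \eqref{pinn3}. Since $\frac1N \log \Znf(N) \to \mathrm{F}(\beta,h) \ge 0$ along the scaling \eqref{eqParam}, and since by \eqref{eq:convd} (upgraded, if necessary, via uniform integrability of $\log \Znf(Nt)$) one has $\frac1N \bbE[\log \Znf(Nt)] \to \frac1t\,{\fhc}(\hb,\hh)$ after the appropriate rescaling — here one must be careful to match the two limits $N\to\infty$ and $t\to\infty$, so it is cleaner to argue directly in the continuum. Alternatively, and more self-containedly: restrict the expectation defining $\Zf(t)$ to the event that the regenerative set has no point in $(0,t]$; this contributes a strictly positive deterministic amount (the ``survival probability'' $\asymp t^{-\alpha}$, which is $W$-independent), so $\Zf(t) \ge c\, t^{-\alpha}$ deterministically, hence $a_t \ge \log c - \alpha \log t$, and dividing by $t$ and letting $t\to\infty$ gives ${\fhc}(\hb,\hh) \ge 0$. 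This is the exact continuum analogue of the argument behind \eqref{pinn3}.

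\textbf{Convexity and scaling.} Convexity of $\hh \mapsto \frac1t\bbE[\log\Zf(t)]$ is inherited from convexity of $\hh \mapsto \log \Zf(t)$, which in turn follows because $\hh$ enters $\Zf(t)$ linearly inside an exponential-type (log-convex) functional — precisely as for the discrete $\log \Znf(N)$; convexity is preserved under pointwise limits, giving convexity of ${\fhc}(\cdot\,,\hh)$ — sorry, of $\hh \mapsto {\fhc}(\hb,\hh)$. The scaling relation \eqref{eq:scalingfe} is the crux and the part I expect to require the most care: it should come from a distributional scaling identity for the continuum partition function itself, namely $\big(\Zf_{c^{\alpha-1/2}\hb,\, c^\alpha \hh}(t)\big)_{t\ge0} \overset{(d)}{=} \big(\Zf_{\hb,\hh}(ct)\big)_{t\ge0}$, which reflects the Brownian scaling $W_{ct}\overset{(d)}{=}\sqrt{c}\,W_t$ together with the $\alpha$-self-similarity of the $\alpha$-stable regenerative set (a point at ``time'' $u$ rescales to ``time'' $cu$, and the associated local-time/renewal density picks up the factors of $c^{\alpha-1/2}$ in $\hb$ and $c^\alpha$ in $\hh$ exactly as dictated by \eqref{eqParam}). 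Granting this identity, $\bbE[\log \Zf_{c^{\alpha-1/2}\hb, c^\alpha\hh}(t)] = \bbE[\log \Zf_{\hb,\hh}(ct)]$; dividing by $t$ and sending $t\to\infty$ along $\N$ (with $ct$ ranging over $c\N$, which is harmless once the limit along $\N$ is known to exist, by a standard interpolation/monotonicity argument using the super-additivity bound above) yields $c\,{\fhc}(\hb,\hh)$ on the right and ${\fhc}(c^{\alpha-1/2}\hb,c^\alpha\hh)$ on the left. The main obstacle is thus verifying the distributional scaling identity rigorously at the level of the Wiener chaos series \eqref{eq:Wiener} — term by term, checking that each multiple stochastic integral transforms with the correct power of $c$ — and handling the subtlety that the limit in \eqref{CFcE} is a priori only along integer $t$, which forces a short argument (again via super-additivity) to compare $a_{\lfloor ct\rfloor}/t$ with $c\, a_t/t$.
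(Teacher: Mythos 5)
Your approach to the existence part is genuinely different from the paper's. The paper does \emph{not} run a super-additivity argument in the continuum: the limit in \eqref{CFcE} drops out of the coarse-graining comparison with the discrete model (Lemma~\ref{th:impex}, relying on the $f^{(1)}\simeq f^{(3)}$ machinery of Theorem~\ref{Thm2}), the key point being that $\lim_{t\to\infty} f_t^{(3)}(N,\hb,\hh)=N\,\mathrm{F}(\beta_N,h_N)$ already exists by the discrete super-additive theory. The paper explicitly defers a direct continuum proof (a.s.\ and $L^1$, for general real $t$) to the reference \cite{cf:T15}; that is where a Fekete-type argument like yours would live. Your route could in principle work, but be aware that the intermediate-time decomposition is substantially more delicate than the discrete one: the CDPM is singular with respect to the regenerative set (no ``continuum Hamiltonian'' exists), and the event ``renewal exactly at $s$'' has probability zero, forcing you to work with a block/neighbourhood decomposition and regularity estimates on $(s,t)\mapsto\Z(s,t)$ (essentially reinventing the paper's coarse-grained Hamiltonian \eqref{HVCF}--\eqref{Hm}). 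Your treatment of convexity and of the scaling relation \eqref{eq:scalingfe} matches the paper's in spirit.

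The non-negativity argument has a genuine gap. You propose restricting to the event that the regenerative set has no point in $(0,t]$ and reading off a deterministic lower bound $\Zf(t)\ge c\,t^{-\alpha}$. But for $\alpha\in(0,1)$ the $\alpha$-stable regenerative set $\btau$ is a perfect set that accumulates at $0$ from the right, so $\P(\btau\cap(0,\epsilon)=\emptyset)=0$ for every $\epsilon>0$; the ``survival event'' has zero probability, and the deterministic bound $\Zf(t)\ge c\,t^{-\alpha}$ simply does not hold (the constant term of the chaos expansion \eqref{eq:Wiener} is $1$, but the stochastic terms can push the value arbitrarily close to $0$). The discrete argument behind \eqref{pinn3} does not carry over verbatim. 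The paper instead inherits non-negativity from the discrete side through \eqref{eqFreeEnergyIntro}. A correct direct continuum argument would start from the representation $\Zf(t)=\E\big[e^{\mathbf{H}^W_{t;\hb,\hh}(\btau)}\big]$ of \eqref{HVCF}, apply Jensen in the $\E$-expectation to get $\log\Zf(t)\ge\E[\mathbf{H}_t(\btau)]=\E\big[\sum_{k=1}^{\mm}\log\Z(\ss_k,\tt_k)\big]$, bound each summand below uniformly using Corollary~\ref{p1resultscont}, and use that $\E[\mm]=O(t^\alpha)=o(t)$ (the expected number of blocks visited by the $\alpha$-stable set up to time $t$ grows sublinearly), so that $\tfrac1t\bbE[\log\Zf(t)]\ge -C\,t^{\alpha-1}\to 0$.
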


\smallskip

In analogy with \eqref{eq:hc}, we define the \emph{continuum critical curve}
$\hcc(\hb)$ by
\begin{equation}\label{criticalcurvecont}
\hcc(\hb)=\sup\{\hh \in \R : \ \fhc(\hb,\hh)=0\},
\end{equation}
which turns out to be positive and finite (see Remark~\ref{rem:finite} below).
Note that, by \eqref{eq:scalingfe},
\begin{equation} \label{eq:scaling}
	{\fhc}(\hb,\hh) = 
	{\fhc}\bigg(1, \frac{\hh}{\hb^{\frac{2\alpha}{2\alpha-1}}} \bigg) \,
	\hb^{\frac{2}{2\alpha-1}} \,, \qquad \text{hence} \qquad
	\hcc(\hb) = \hcc(1) \, \hb^{\frac{2\alpha}{2\alpha-1}} \,.
\end{equation}

Heuristically, the continuum free energy ${\fhc}(\hb,\hh)$
and critical curve $\hcc(\hb)$ 
capture the asymptotic behavior
of their discrete counterparts $\mathrm{F}(\beta,h)$ and $h_c(\beta)$
in the weak coupling regime $h, \beta \to 0$.
In fact, the convergence in distribution
\eqref{eq:convd} suggests that
\begin{equation} \label{eq:ui}
	\mathbb{E}\left[\log \Zf(t)\right] =
	\lim_{N\to\infty}\mathbb{E}\left[\log \Znf(Nt)\right].
\end{equation}
Plugging \eqref{eq:ui} into \eqref{CFcE} and
\emph{interchanging the limits $t \to \infty$ and $N \to \infty$} would yield
\begin{equation}\label{eqFreeEnergyIntro3}
{\fhc}(\hb,\hh) =
\lim_{t\to\infty}\frac{1}{t}\lim_{N\to\infty}\mathbb{E}\left[\log \Znf(Nt)\right] =
\lim_{N\to\infty} N \lim_{t\to\infty}\frac{1}{Nt} \mathbb{E}\left[\log \Znf(Nt)\right],
\end{equation}
which by \eqref{DFE} and \eqref{eqParam} leads to the key relation
(with $\epsilon = \frac{1}{N}$):
\begin{equation}\label{eqFreeEnergyIntro}
{\fhc}(\hb,\hh)=\lim_{N\to\infty}N \, \mathrm{F}(\beta_N,h_N)
= \lim_{\epsilon \downarrow 0} \frac{\mathrm{F}\big(\hb \, \epsilon^{\alpha-\frac{1}{2}}
L(\frac{1}{\epsilon}) , \, \hh \,
\epsilon^\alpha L(\frac{1}{\epsilon}) \big)}{\epsilon}.
\end{equation}
We point out that relation \eqref{eq:ui} is typically justified, as the
family $(\log \Znf(Nt))_{N\in\N}$ can be shown to be uniformly integrable,
but the interchanging of limits in \eqref{eqFreeEnergyIntro3} is in general
a delicate issue.
This was shown to hold for the copolymer model
with  tail exponent $\alpha < 1$, cf. \cite{BdH97,CG10},
but it is known to \emph{fail} for both pinning and copolymer models
with $\alpha > 1$ (see point 3 in \cite[\S 1.3]{CRZ13}).

The following theorem, which is our main result,
shows that for disordered pinning models with $\alpha \in (\frac{1}{2},1)$
relation \eqref{eqFreeEnergyIntro} does hold.
We actually prove a stronger relation,
which also yields the precise asymptotic behavior of the critical curve.

\begin{theorem}[Interchanging the limits]\label{Thm2}
Let $\mathrm{F}(\beta,h)$ be the free energy of the disordered pinning
model \eqref{model}-\eqref{DFE}, where the renewal process $\tau$ satisfies
\eqref{assRP0}-\eqref{assRP2} for some $\alpha \in (\frac{1}{2}, 1)$
and the disorder $\omega$ satisfies
\eqref{assD1}-\eqref{assD2}.
For all $\hb > 0$, $\hh\in \R$ and $\eta > 0$ there exists 
$\epsilon_0 > 0$ such that
\begin{equation} \label{eq:key}
{\fhc}\left(\hb,\hh - \eta\right)\leq 
\frac{\mathrm{F}\big(\hb \, \epsilon^{\alpha-\frac{1}{2}}
L(\frac{1}{\epsilon}) , \, \hh \,
\epsilon^\alpha L(\frac{1}{\epsilon}) \big)}{\epsilon}
\leq {\fhc}\left(\hb,\hh + \eta \right), \qquad
\forall \epsilon \in (0,\epsilon_0) \,.
\end{equation}
As a consequence, relation \eqref{eqFreeEnergyIntro} holds,
and furthermore
\begin{equation}\label{eq:keyhc}
\lim_{\beta\to 0}
\,\frac{h_c(\beta)}{\tilde{L}_\alpha(\frac{1}{\beta}) \, \beta^{\frac{2\alpha}{2\alpha-1}}}=\hcc(1) ,
\end{equation}
where $\tilde L_\alpha$ is the slowly function appearing in \eqref{eq:hb}
and the following lines.
\end{theorem}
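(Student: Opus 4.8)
\textbf{Proof strategy for Theorem~\ref{Thm2}.}
The plan is to prove the two-sided bound \eqref{eq:key}, since both \eqref{eqFreeEnergyIntro} and \eqref{eq:keyhc} follow by routine arguments: the former by letting $\eta \downarrow 0$ and using continuity of $\hh \mapsto \fhc(\hb,\hh)$ (convexity gives continuity on the interior, and one checks continuity does not fail at the relevant points), the latter by inverting the scaling relations \eqref{eq:hb}–\eqref{eq:scaling} exactly as sketched in Remark~\ref{rem:hb}, translating $\fhc(\hb,\hh\pm\eta) = 0 \iff \hh\pm\eta \le \hcc(\hb)$ into a sandwich for $h_c(\beta)/(\tilde L_\alpha(1/\beta)\beta^{2\alpha/(2\alpha-1)})$ around $\hcc(1)$. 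So the heart of the matter is comparing, for fixed large $t$ and then $N\to\infty$, the discrete finite-volume free energy $\frac{1}{Nt}\log \Znf(Nt)$ with the continuum one $\frac{1}{t}\log\Zf(t)$, uniformly enough to survive the interchange of limits.

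First I would set up the coarse-graining. Fix a (large) "block length" and cut $\{1,\dots,Nt\}$ into $\sim t$ macroscopic blocks of size $N$ (or, more flexibly, $N/k$ for an auxiliary integer $k$ that is later sent to infinity). Expanding $\Znf(Nt)$ over which blocks are "occupied" by the renewal $\tau$ and which are empty, one writes $\Znf(Nt)$ as a sum over subsets $\mathcal{I}\subseteq\{1,\dots,t\}$ of products of: (i) block partition functions $\Zn$ restricted to occupied blocks, which by \eqref{eq:convd} converge in law to the continuum block partition functions $\Zf$, times (ii) combinatorial renewal factors $K(\cdot)$ and $u(\cdot)$ linking successive blocks, whose polynomial asymptotics \eqref{assRP0} and \eqref{assRP1} (with the regularity \eqref{assRP2}) match exactly the $\alpha$-stable regenerative-set kernels appearing in the Wiener-chaos/continuum description of $\Zf$. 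This is precisely the "coarse-grained Hamiltonian built from partition functions" advertised in the introduction — the point being that there is no continuum Hamiltonian to compare directly, so one compares the coarse-grained ones. The upper bound on $\frac1{Nt}\log\Znf(Nt)$ comes from bounding each discrete block partition function from above by (a constant times) the corresponding continuum quantity with a slightly enlarged $\hh$ (absorbing the slowly-varying and error terms into the $\eta$-shift), summing over $\mathcal I$, and recognizing the resulting sum as a coarse-grained representation of $\Zf(t)$ with parameter $\hh+\eta$; then take $\frac1t\log$ and let $t\to\infty$ via \eqref{CFcE}. The lower bound is dual: restrict the discrete sum to a favorable sub-collection of $\mathcal I$'s, bound the block partition functions from below, and recover $\fhc(\hb,\hh-\eta)$.

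The key analytic inputs are: (a) a \emph{quantitative} version of the convergence $\Zn \tolaw \Zf(1)$ — not just in law but with matching moments / concentration, which is exactly what the strengthened disorder assumption \eqref{assD2} buys us (convex-Lipschitz concentration controls $\log$ of the partition function, giving exponential tail bounds uniform in $N$, hence uniform integrability and control of large deviations of the block partition functions); and (b) tightness/integrability estimates ensuring that the "bad" configurations (blocks where $\log\Zn$ is atypically large, or where the number of occupied blocks is atypical) contribute negligibly — this is where one needs the coarse-graining to be robust, controlling entropy (the number of subsets $\mathcal I$) against the energy cost via the renewal tail exponent $\alpha\in(\frac12,1)$. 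I expect the main obstacle to be exactly this: making the coarse-graining error uniform in $N$ and summable in $t$ simultaneously, i.e. showing the interchange of limits does not produce an extra additive constant. Concretely, one must show the discrepancy between $\frac{1}{Nt}\E\log\Znf(Nt)$ and its continuum analogue is $o(1/t)$ as $t\to\infty$ \emph{uniformly} for large $N$, which requires the block-partition-function tail bounds from \eqref{assD2} to be strong enough ($\gamma>1$) that the sub/super-multiplicativity errors telescope. Controlling the regime where a block contains very few renewal points (so the Gaussian/continuum approximation \eqref{eq:convd} is weakest) against the polynomial renewal mass is the delicate technical point, and is presumably where the bulk of the paper's work — and the role of \eqref{assRP2} — will lie.
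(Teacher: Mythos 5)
Your high-level plan is the right one and matches the paper's: express both $\Znf(Nt)$ and $\Zf(t)$ via coarse-grained Hamiltonians that are sums of $\log$ of block partition functions over visited blocks, then compare those representations with an $\eta$-shift in $\hh$, and use the concentration hypothesis \eqref{assD2} to get uniform-in-$N$ moment and tail control on the block quantities (Proposition~\ref{p1results}). But the mechanism you describe for carrying out the comparison---``bound each discrete block partition function from above by (a constant times) the corresponding continuum quantity with a slightly enlarged $\hh$, sum, and recognize the result''---would not close, for two reasons. First, the discrete and continuum block partition functions are distinct random variables: even under the coupling of Theorem~\ref{PropByProd} you cannot deterministically dominate $\Zn$ by $\Z$ (with shifted $\hh$) on the whole probability space, and a multiplicative constant in front of the continuum quantity destroys the rate because you sum order $t$ such logs. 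The paper's device is a fractional-moment/H\"older interpolation: with $\Delta := \mathrm{H}^{(i)} - (1-\epsilon)\mathrm{H}^{(j)}$ one writes $\E(e^{\mathrm{H}^{(i)}}) \le \E(e^{\mathrm{H}^{(j)}})^{1-\epsilon}\E(e^{\Delta/\epsilon})^\epsilon$, so it suffices to show $\limsup_t \frac{1}{t}\log\E\bbE(e^{\Delta/\epsilon})\le 0$; this exchanges the impossible pathwise bound for a moment bound, and the block-ratio moments are controlled by the strict monotonicity in $\hh$ (Proposition~\ref{th:continuous}) together with the geometric-series argument in \eqref{eq:geomm}, with Lemma~\ref{LT1} supplying the key estimate. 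Second, your sketch averages the coarse-grained Hamiltonian over the discrete renewal $\tau/N$ on one side and over the regenerative set $\btau$ on the other, but these laws are not equal, only close. The paper introduces an intermediate quantity $f^{(2)}$ (continuum regenerative set, discrete block partition functions) and handles the change of measure $\tau/N \leftrightarrow \btau$ via the Radon--Nikodym derivative $\RN_t$ in \eqref{eq:RADONNIKODYM}, whose approximate factorization \eqref{eq:RNfact} and the resulting moment bounds (Lemma~\ref{lemma:lemmone1}, Lemma~\ref{th:auxi}) are where the renewal-regularity hypothesis \eqref{assRP2} and the estimates on $\ss_k,\tt_k$ (Lemma~\ref{th:lemmaP2}) are actually consumed---not, as you guessed, primarily in controlling sparsely-visited blocks. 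These two ingredients (fractional-moment interpolation and the Radon--Nikodym change of renewal law, with the delicate block-boundary issue $\tt_{k-1}\approx\JJ_{k-1}$ handled by the $\gamma$-split in Lemma~\ref{lemma11111}) are the substance of the proof and are absent from your sketch.
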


Note that relation \eqref{eqFreeEnergyIntro} follows immediately by
\eqref{eq:key}, sending first $\epsilon \to 0$ and then $\eta \to 0$,
because $\hh \mapsto {\fhc}(\hb,\hh)$ is continuous
(by convexity, cf.\ Theorem~\ref{Thm1}). Relation
\eqref{eq:keyhc} also follows by \eqref{eq:key},
cf.\ \S\ref{sec:hcfe}, but it would not follow from \eqref{eqFreeEnergyIntro},
because convergence of functions does not necessarily
imply convergence of the respective
zero level sets. This is why we prove \eqref{eq:key}.

\begin{remark}\label{rem:finite}
Relation \eqref{eq:keyhc}, coupled with the known bounds \eqref{eq:known}
from the literature, shows in particular that $0 < \hcc(1) < \infty$ (hence
$0 < \hcc(\hb) < \infty$ for every $\hb > 0$, by \eqref{eq:scaling}).
Of course, in principle this can
be proved by direct estimates on the continuum
partition function.
\end{remark}

\subsection{On the critical behavior}
Fix $\hb > 0$.
The scaling relations \eqref{eq:scaling} imply that for all $\epsilon > 0$
\begin{equation*}
	{\fhc}(\hb,\hcc(\hb) + \epsilon) = 
	\hb^{\frac{2}{2\alpha-1}} \,
	{\fhc}\left(1, \hcc(1)+ \frac{\epsilon}{\hb^{\frac{2\alpha}{2\alpha-1}}} \right) \, \,.
\end{equation*}
Thus, as $\epsilon \downarrow 0$ (i.e.\ 
as $\hh \downarrow \hcc(\hb)$) the free energy vanishes in the same way;
in particular, \emph{the critical exponent $\gamma$ is the same for every $\hb$}
(provided it exists):
\begin{equation}\label{eq:critexp}
	{\fhc}(1,\hh) \underset{\hh \downarrow \hcc(1)}{=} (\hh - \hcc(1))^{\gamma + o(1)}
	\qquad \Longrightarrow \qquad
	{\fhc}(\hb,\hh) \underset{\hh \downarrow \hcc(1)}{=} 
	\hb^{\frac{-2(\alpha\gamma-1)}{2\alpha-1}} (\hh - \hcc(\hb))^{\gamma + o(1)} \,.
\end{equation}

Another interesting observation is that the smoothing inequality of \cite{GTsmooth}
can be extended to the continuum. For instance, in the case of Gaussian disorder
$\omega_i \sim N(0,1)$, it is known that the discrete free energy
$\mathrm{F}(\beta,h)$ satisfies the following relation, for all $\beta > 0$ and $h\in\R$:
\begin{equation*}
\begin{split}
	0 \le \mathrm{F}(\beta,h) \le \frac{1+\alpha}{2 \beta^2} \, (h-h_c(\beta))^2 \,.
\end{split}
\end{equation*}
Consider a renewal process satisfying \eqref{assRP0} with $L \equiv 1$
(so that also $\tilde L_\alpha \equiv 1$, cf.\ Remark~\ref{rem:hb}).
Choosing $\beta = \hb \, \epsilon^{\alpha-\frac{1}{2}}$ 
and $h = \hh \, \epsilon^\alpha$ 
and letting $\epsilon \downarrow 0$, we can
apply our key results \eqref{eqFreeEnergyIntro} and \eqref{eq:keyhc}
(recall also \eqref{eq:scaling}),
obtaining a smoothing inequality for the continuum free energy:
\begin{equation*}
	{\fhc}(\hb,\hh) \le \frac{1+\alpha}{2 \hb^2} \, \left(\hh-
	\hcc(\hb) \right)^2 \,.
\end{equation*}
In particular, the exponent $\gamma$ in \eqref{eq:critexp} has to satisfy $\gamma \ge 2$
(and consequently, the prefactor in the second relation in \eqref{eq:critexp} is
$\hb^{-\eta}$ with $\eta > 0$).

\subsection{Further results}
\label{sec:further}

Our results on the free energy and critical curve 
are based on a comparison of discrete and continuum partition function,
whose properties we investigate in depth. Some of the results
of independent interest are presented here.

Alongside the ``free'' partition function $\mathrm{Z}_{\beta,h}^{\omega}(N)$
in \eqref{model}, it is useful to consider a family $\mathrm{Z}_{\beta,h}^{\omega,\mathtt{c}}(a,b)$
of ``conditioned'' partition functions, for $a,b\in\mathbb{N}_0$ with $a \le b$:
\begin{equation}\label{discrcondfor}
\mathrm{Z}_{\beta,h}^{\omega,\mathtt{c}}(a,b)=\mathrm{E}\left(\left.e^{\sum_{k=a+1}^{b-1}(\beta\omega_k-\Lambda(\beta)+h)\ind_{k\in\tau}} \right| a\in\tau, b\in\tau\right) \,.
\end{equation}
If we let $N\to\infty$ with $\beta_N, h_N$ as in \eqref{eqParam},
the partition
functions $\mathrm{Z}_{\beta_N,h_N}^{\omega,\mathtt{c}}(Ns,Nt)$, for $(s,t)$ in
\begin{equation*}
	[0,\infty)_{\leq}^2:=\{(s,t)\in [0,\infty)^2 \mid s\leq t\} \,,
\end{equation*}
converge in the sense of finite-dimensional distributions \cite[Theorem 3.1]{CRZ13},
in analogy with \eqref{eq:convd}:
\begin{equation} \label{eq:convd2}
\Big( \mathrm{Z}_{\beta_N,h_N}^{\omega,\mathtt{c}}(Ns,Nt)
\Big)_{(s,t) \in [0,\infty)^2_\le} \tolaw 
\Big( \Z(s,t) \Big)_{(s,t) \in [0,\infty)^2_\le}, \quad N\to\infty ,
\end{equation}
where $\Z(s,t)$ admits an explicit Wiener chaos expansion, cf.\ \eqref{eq:Wiener2} below.

\smallskip

It was shown in \cite[Theorem 2.1 and Remark 2.3]{CRZ14} that,
under the further assumption \eqref{assRP2},
the convergences \eqref{eq:convd} and \eqref{eq:convd2}
can be upgraded: by linearly interpolating the
discrete partition functions for $Ns, Nt\not\in \N_0$, one has convergence
in distribution in the space of continuous functions of $t \in [0,\infty)$
and of $(s,t) \in [0,\infty)^2_\le$, respectively, equipped
with the topology of uniform convergence on compact sets.
We strengthen this result, by
showing that the convergence is locally uniform also in the variable $\hh \in \R$.
We formulate this fact through the existence
of a suitable coupling. 

\begin{theorem}[Uniformity in $\hh$]\label{PropByProd}
Assume
\eqref{assRP0}-\eqref{assRP2}, for some $\alpha \in (\frac{1}{2},1)$, and
\eqref{assD1}. For all $\hb > 0$,
%
there is a coupling of discrete and continuum partition functions such that
the convergence \eqref{eq:convd}, resp.\ \eqref{eq:convd2}, holds 
$\bbP(\dd \omega, \dd W)$-a.s.\ 
uniformly in any compact set of values of $(t,\hh)$, resp.\ of $(s,t,\hh)$.
\end{theorem}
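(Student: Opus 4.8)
The plan is to upgrade the uniform (in $t$, resp.\ $(s,t)$) convergence already established in \cite[Theorem 2.1]{CRZ14} to joint uniformity in the external field parameter $\hh$, by exploiting two structural features: the explicit Wiener chaos expansion of the partition functions, and the fact that $\hh$ enters both the discrete and the continuum objects in a very controlled, essentially \emph{polynomial} (analytic) way. First I would recall that, for fixed $\hb>0$, both $\Znf(Nt)$ and $\Zf(t)$ (and their conditioned versions) can be written as convergent series of multilinear terms indexed by the number $k$ of ``pinned points'', where the $\hh$-dependence of the $k$-th term is through a factor that is polynomial in $\hh$ of degree at most $k$ (this is transparent from \eqref{eq:Wiener}--\eqref{eq:Wiener2}, where shifting $h$ amounts to reweighting each renewal point). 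Concretely, one has an expansion of the schematic form $\Zf(t)=\sum_{k\ge 0}\hh^{\,?}(\text{chaos terms})$ — more precisely, writing the continuum partition function with field $\hh$ as an integral over the $\alpha$-stable regenerative set against a Poisson-type intensity $\hh\,\dd s$ plus the white-noise part $\hb\,\dd W$, expanding $e^{\hh(\cdots)}$ gives an absolutely convergent double series in which the $(j,k)$-term carries a factor $\hb^{\,j}\hh^{\,k}$.

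Next I would fix a compact set $K\subset[0,\infty)\times\R$ (resp.\ in $[0,\infty)^2_\le\times\R$), say with $\hh\in[-M,M]$ and $t\le T$, and argue in two steps. \emph{Step 1 (tail control, uniform in $\hh\in[-M,M]$).} Using the $L^2$ bounds on the chaos expansions that underlie \eqref{eq:convd}--\eqref{eq:convd2} — which are already quantitative in \cite{CRZ13,CRZ14} — together with the crude bound $|\hh|^k\le M^k$, I would show that the series for $\Zf(t)$ and for $\Znf(Nt)$, truncated at level $k_0$, approximate the full quantities in $L^2(\bbP)$ uniformly over $\hh\in[-M,M]$, $t\le T$ and (for the discrete one) over $N$, with an error that tends to $0$ as $k_0\to\infty$; a Borel--Cantelli / maximal-inequality argument along the coupling promotes this to an almost sure uniform-in-$(t,\hh)$ statement. \emph{Step 2 (finite truncations).} For each fixed truncation level $k_0$ the truncated partition function is a \emph{polynomial} in $\hh$ of degree $k_0$ whose coefficients are exactly the chaos terms appearing in the $\hh=0$ theory, and \cite[Theorem 2.1]{CRZ14} (applied coefficient by coefficient, or rather to the finitely many chaos integrals involved) already gives, under the same coupling, a.s.\ uniform-in-$t$ convergence of each coefficient. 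Since a polynomial of fixed degree depends continuously — indeed Lipschitz on compacts — on its coefficients, uniformly in $\hh\in[-M,M]$, the truncated discrete partition functions converge a.s.\ to the truncated continuum ones uniformly in $(t,\hh)\in K$. Combining Steps 1 and 2 with a triangle inequality yields the claim.

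I would handle the conditioned partition functions $\Z(s,t)$ in exactly the same way, the only extra bookkeeping being the two endpoint constraints $s,t\in\tau$, which do not affect the polynomial structure in $\hh$; and I would note that the coupling is simply the one already produced in \cite{CRZ14} (built from Skorokhod representation for the joint law of $(\omega,W)$ realizing \eqref{eq:convd2} for $\hh=0$), since $\hh$ is a deterministic parameter and no re-coupling is needed. The main obstacle I anticipate is \emph{Step 1}: one needs the $L^2$-control of the chaos tails to be genuinely uniform in $N$ and in $\hh$ over compacts, and then to convert an $L^2$-in-$\omega,W$ bound on a sup over the \emph{continuous} parameter $t$ (or $(s,t)$) into an almost sure statement — this requires either a modulus-of-continuity estimate in $t$ for each chaos level (available from \cite{CRZ14}, since the limiting chaos has a continuous version and the discrete interpolations are tight in $C$) or a chaining argument, combined with summability in $k_0$ of the tail bounds so that Borel--Cantelli applies. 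Once that uniform tail estimate is in place, the rest is the soft polynomial-continuity argument of Step 2 and is essentially routine.
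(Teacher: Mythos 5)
Your strategy matches the paper's: both proofs expand the $\hh$\nobreakdash-dependence of the partition function as a power series in $\hh$ whose coefficients involve only the $\hh = 0$ objects, then combine the a.s.\ uniform convergence of the latter (already available from \cite[Theorem 2.1]{CRZ14} under a Skorokhod coupling) with a uniform-in-$N$ tail estimate on the series. The difference is in how explicitly the coefficients are identified, and this is not just cosmetic. You say the $\hh^k$\nobreakdash-coefficient of the truncated partition function is ``exactly the chaos terms appearing in the $\hh = 0$ theory'' and propose to invoke \cite[Theorem 2.1]{CRZ14} ``coefficient by coefficient''. That is not quite right as stated: after expanding $\prod_i(\hb\,\dd W_{t_i}+\hh\,\dd t_i)$ binomially, the coefficient of $\hh^k$ is \emph{not} a single chaos term but a double series (over the chaos order $n\ge k$) which must be \emph{resummed in $n$} to recognize it as a $k$\nobreakdash-fold deterministic integral of products of the $\hh = 0$ partition functions $\Y(t_{i-1},t_i)$. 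The paper does this resummation explicitly (Theorem~\ref{Theorem676result}, formulas \eqref{eq:version}--\eqref{eq:version2}); it is precisely this identification that lets one apply the $\hh=0$ uniform convergence to each coefficient, and it requires its own justification (absolute convergence to permit the rearrangement). Similarly, on the discrete side the natural expansion is not a polynomial in $\hh$ but in $e^{h_N}-1$, via $e^{h\ind_{n\in\tau}}=1+(e^h-1)\ind_{n\in\tau}$, and one must track the prefactor $\big(\tfrac{e^{h_N}-1}{h_N}\big)^k\to 1$ alongside the Riemann-sum convergence of the $u$\nobreakdash-kernels; this is the content of \eqref{eq:multiple}--\eqref{eq:multiple1}. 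Finally, your ``Step 1'' ($L^2$ tail control plus Borel--Cantelli/chaining) is more roundabout than necessary: since the coupling already gives a.s.\ uniform convergence of $\Y$, the paper controls the tails of the $\hh$\nobreakdash-series \emph{pathwise}, via Potter's bounds and \cite[Lemma B.3]{CRZ13}, on an event of probability one where $Q_{\beta_N}^\omega$ converges uniformly; no further Borel--Cantelli step is needed. So the proposal has the correct architecture, but you should make the resummation Theorem~\ref{Theorem676result} explicit, treat the $e^h-1$ vs.\ $h$ discrepancy, and note that the tail estimate can be done a.s.\ rather than in $L^2$.
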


We prove Theorem~\ref{PropByProd} 
by showing that partition functions with
$\hh \ne 0$ can be expressed in terms of those with $\hh = 0$ through
an explicit series expansion (see Theorem~\ref{Theorem676result} below).
This representation shows that the continuum partition functions
are increasing in $\hh$. They are also log-convex in $\hh$, because
$h \mapsto \log \mathrm{Z}_{\beta, h}^\omega$ and
$h \mapsto \log \mathrm{Z}_{\beta, h}^{\omega, \mathtt{c}}$ are convex functions
(by H\"older's inequality, cf.\ \eqref{model} and \eqref{discrcondfor})
and convexity is preserved by pointwise limits.
Summarizing:

\begin{proposition} \label{th:continuous}
For all $\alpha \in (\frac{1}{2},1)$ and $\hb > 0$,
the process $\Zf(t)$, resp.\ $\Z(s,t)$, admits a version which is
continuous in $(t,\hh)$, resp.\ in $(s,t,\hh)$. For fixed $t > 0$, resp.\ $t > s$,
the function $\hh \mapsto \log \Zf(t)$, resp.\ $\hh \mapsto \log \Z(s,t)$,
is strictly convex and strictly increasing.
\end{proposition}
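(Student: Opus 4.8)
\textbf{Proof plan for Proposition~\ref{th:continuous}.}
The plan is to derive everything from the announced series representation of the continuum partition functions in terms of their $\hh = 0$ counterparts (Theorem~\ref{Theorem676result}, cited just above), together with elementary convexity arguments. First I would record the discrete analogue: for the discrete conditioned partition function, expanding the factor $e^{h \ind_{k\in\tau}} = 1 + (e^h - 1)\ind_{k\in\tau}$ over all sites $a < k < b$ yields
\begin{equation*}
	\Znf(N) = \sum_{m \ge 0} (e^{h_N}-1)^m
	\sumtwo{0 < t_1 < \cdots < t_m \le N}{}
	\prod_{i=1}^m \mathrm{Z}_{\beta_N,0}^{\omega,\mathtt c}(t_{i-1},t_i) \,
	\mathrm{Z}_{\beta_N,0}^{\omega,\mathtt c}(t_m, \cdot) \,,
\end{equation*}
(with $t_0 := 0$ and an analogous identity for the conditioned version), and then pass to the limit $N\to\infty$ using \eqref{eq:convd}--\eqref{eq:convd2}: since $e^{h_N}-1 \sim h_N \sim \hh\, L(N)/N^\alpha$ and the number of sites in a macroscopic block is of order $N$, the $m$-th term converges to an $m$-fold deterministic--stochastic integral in the variables $(t_1,\dots,t_m)$ against the continuum partition functions $\Zo$, producing a convergent series expansion for $\Zf$ (resp.\ $\Z$) in powers of $\hh$ with \emph{nonnegative} coefficients that do not depend on the sign of $\hh$. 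This is precisely the content I would attribute to Theorem~\ref{Theorem676result}; here I only need its structural consequences.

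Given such a representation, monotonicity is immediate: each term in the series is a monomial $\hh^m$ times a nonnegative (indeed a.s.\ positive, since $\Zo > 0$) random coefficient, so $\hh \mapsto \Zf(t)$ is nondecreasing, and strictly increasing because already the $m=0,1$ terms give $\Zo(t) + \hh \int_0^t \Zo(0,u)\,\Zo(u,t)\,\dd u$ with positive integrand. For continuity in $(t,\hh)$: the series converges locally uniformly in $\hh$ (the coefficients have finite moments of all orders, by the same $L^2$/hypercontractivity bounds used in \cite{CRZ13,CRZ14} to control the Wiener chaos, and one gets a deterministic geometric-type bound on the $m$-th coefficient on compacts), so it suffices to know that each coefficient process is continuous in $t$, which follows from the continuity in $t$ of $\Zo$ already fixed in \cite{CRZ14} together with dominated convergence inside the finitely many integrals. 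Joint continuity in $(t,\hh)$ then follows from continuity in $t$ for each $\hh$ plus local uniformity in $\hh$.

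For log-convexity I would argue at the discrete level first: by H\"older's inequality applied to \eqref{model} and \eqref{discrcondfor}, $h \mapsto \log \Znf(N)$ and $h \mapsto \log \mathrm{Z}_{\beta_N, h}^{\omega,\mathtt c}(a,b)$ are convex for every fixed realization of $\omega$; convexity is preserved under pointwise limits, so $\hh \mapsto \log\Zf(t)$ and $\hh \mapsto \log\Z(s,t)$ are convex. To upgrade to \emph{strict} convexity, note that a convex function that is affine on some interval would force the partition function itself to be log-affine there; but from the series expansion the second $\hh$-derivative of $\log\Zf(t)$ is a ratio of the kind $\big(\Zf \cdot \partial_{\hh}^2\Zf - (\partial_{\hh}\Zf)^2\big)/\Zf^2$, and the numerator is strictly positive a.s.\ because $\Zf$ is a genuine (non-degenerate) power series in $\hh$ with positive coefficients — a nonconstant such series is strictly log-convex. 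I expect the main obstacle to be the justification of \eqref{eq:convd}--\eqref{eq:convd2} implying termwise convergence of the series with uniform-in-$\hh$ control, i.e.\ interchanging the limit $N\to\infty$ with the infinite sum over $m$; this requires a uniform tail bound on the $m$-th term, which is exactly where the $L^2$ estimates on chaos expansions from \cite{CRZ13} (the finiteness of $\sum_m \bbE[(\text{coeff})^2]\,r^{2m} < \infty$ for $r$ in a neighbourhood of any fixed $|\hh|$) enter, and this is deferred to the proof of Theorem~\ref{Theorem676result}.
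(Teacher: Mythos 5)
Your overall route matches the paper's: continuity in $(t,\hh)$, resp.\ $(s,t,\hh)$, and monotonicity in $\hh$ are read off from the series expansion of Theorem~\ref{Theorem676result} (whose existence you correctly attribute there and defer), while log-convexity comes from H\"older's inequality applied to \eqref{model}--\eqref{discrcondfor} at the discrete level, preserved under pointwise limits. So far this is exactly the argument the paper gives in the paragraph preceding Proposition~\ref{th:continuous}.

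The flaw is in your upgrade to \emph{strict} convexity. You claim that the numerator $\Zf\,\partial_\hh^2\Zf - (\partial_\hh\Zf)^2$ is strictly positive ``because $\Zf$ is a genuine (non-degenerate) power series in $\hh$ with positive coefficients --- a nonconstant such series is strictly log-convex.'' That implication is false. Take $f(\hh)=1+\hh$: nonnegative coefficients, nonconstant, yet $\frac{d^2}{d\hh^2}\log(1+\hh)=-1/(1+\hh)^2<0$, so it is strictly log-\emph{concave}; the same happens for the truncations $1+\hh+\hh^2/2$, and $e^{\hh}=\sum \hh^k/k!$ is exactly log-affine. Positivity of the $a_k$ in $\Zf(t)=\sum_{k\ge 0}a_k\hh^k$ does give you monotonicity (and, combined with $a_1>0$ a.s.\ and $\hh$-analyticity, strict monotonicity), but it carries no convexity information whatsoever. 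The only convexity you actually possess comes from H\"older, and that route gives you ``convex,'' not ``strictly convex,'' since strict convexity is not preserved by pointwise limits. To close this gap you would need a separate argument — for instance, exploiting analyticity in $\hh$ to show that a log-affine form $c\,e^{d\hh}$ on an interval forces it globally, and then ruling out $\Zf(t)=c\,e^{d\hh}$ a.s.\ by comparing the chaos coefficients $a_k$ with $c\,d^k/k!$. As written, this step does not hold.
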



\smallskip

We conclude with some important estimates, bounding (positive and negative) moments
of the partition functions and providing a  deviation inequality.

\begin{proposition}\label{p1results}
Assume
\eqref{assRP0}-\eqref{assRP2}, for some $\alpha \in (\frac{1}{2},1)$, and
\eqref{assD1}. Fix $\hb > 0$, $\hh \in \R$.
For all $T > 0$ and $p \in [0,\infty)$,
there exists a constant $C_{p,T} < \infty$ such that
\begin{equation}\label{AB3} 
 \bbE \left[ \sup_{0 \le s \le t \le T}
 \Zn(Ns,Nt) ^p\right]\leq C_{p,T} \,, \qquad \forall N\in\N \,.
\end{equation}
Assuming also \eqref{assD2}, relation \eqref{AB3} holds also for every $p \in (-\infty, 0]$,
and furthermore one has
\begin{equation}\label{AB3bis}
	\sup_{0\le s \le t \le T} \bbP\Big( \log \Zn(Ns,Nt) \le - x \Big)
	\le A_T \exp \left( - \frac{x^\gamma}{B_T} \right) \,, \qquad \forall x \ge 0,
	\ \forall N\in\N \,,
\end{equation}
for suitable finite constants $A_T$, $B_T$.
Finally, relations \eqref{AB3}, \eqref{AB3bis} hold also for the free partition function
$\Znf(Nt)$ (replacing $\sup_{0\le s \le t \le T}$ with $\sup_{0\le t \le T}$).
\end{proposition}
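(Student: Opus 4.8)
\textbf{Proof proposal for Proposition~\ref{p1results}.}

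The plan is to establish \eqref{AB3} first, since both the negative-moment version and the deviation bound \eqref{AB3bis} will follow from it combined with the concentration hypothesis \eqref{assD2}. For the positive moments with $p$ an even integer, I would expand the conditioned partition function $\Zn(Ns,Nt)$ using its polynomial-chaos (renewal) representation: writing $e^{\beta\omega_k - \Lambda(\beta)+h} = 1 + (\text{fluctuation term})$ along the renewal points, one obtains $\Zn(Ns,Nt)$ as a sum over finite subsets of renewal points of products of factors $e^{\beta\omega_k-\Lambda(\beta)}-1$ weighted by renewal mass functions $u(\cdot)$. Taking the $p$-th moment and using independence of the $\omega$'s, the cross terms vanish and one is left with a sum of ``$p$-fold overlapping renewal'' contributions. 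The scaling \eqref{eqParam} is precisely tuned so that $\beta_N^2 u(N\cdot) \approx$ a nice kernel; using the local renewal theorem \eqref{assRP1} together with the polynomial-rate control \eqref{assRP2}, these sums converge to the corresponding continuum ($p$-fold) integrals, which are finite because $\alpha > \tfrac12$ — this is exactly the computation underlying the $L^2$ (and $L^p$) boundedness in \cite{CRZ13,CRZ14}. The supremum over $0 \le s \le t \le T$ inside the expectation is handled by a chaining / Kolmogorov-type argument: one bounds moments of increments $\Zn(Ns,Nt) - \Zn(Ns',Nt')$ by a power of $|s-s'| + |t-t'|$ uniformly in $N$ (again via the chaos expansion), which upgrades pointwise $L^p$ bounds to a bound on the sup; alternatively one may invoke the continuity estimates already obtained in \cite[Theorem 2.1]{CRZ14} and their quantitative versions. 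For general real $p \ge 0$, interpolate / use that the $2\lceil p\rceil$-moment controls the $p$-moment.

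For the negative moments and \eqref{AB3bis}, I would use the Lipschitz-concentration input \eqref{assD2}. The key observation is that, for fixed renewal-path data, the map $\omega \mapsto \log \Zn(Ns,Nt)$ is convex (immediate from Hölder, as already noted for the discrete partition function), and it is Lipschitz in $\omega$ restricted to the relevant coordinates with Lipschitz constant of order $\beta_N \sqrt{\ell}$ where $\ell = |\tau \cap (Ns,Nt)|$ is the number of renewal points — but here one must be careful, since $\ell$ is itself random and unbounded. The standard fix (as in \cite{GB07,GB10}) is to bound the Lipschitz constant of $f(\omega) = \log \Zn$ deterministically: $\big|\partial_{\omega_k} \log \Zn\big| \le \beta_N\, \P_{\beta_N,h_N}^{\omega,\mathtt{c}}(k\in\tau) \le \beta_N$, so the Euclidean Lipschitz constant over the $\lfloor N(t-s)\rfloor$ relevant coordinates is at most $\beta_N \sqrt{\lfloor N(t-s)\rfloor} \le \beta_N \sqrt{NT} = \hb\, L(N)\, T^{1/2} / N^{\alpha-1}$, which is bounded (in fact small) uniformly in $N$ since $\alpha < 1$. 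Hence \eqref{assD2} applies directly to $f/(\text{const})$ and gives $\bbP(|\log\Zn - \bbE\log\Zn| \ge x) \le C_1 \exp(-x^\gamma/C_2')$ uniformly in $N$ and in $0\le s\le t\le T$. Since $\bbE[\Zn] = 1$ by construction (the $\Lambda(\beta)$ normalization), Jensen gives $\bbE[\log\Zn] \le 0$; combined with the lower tail bound this yields \eqref{AB3bis} after absorbing the deterministic shift, and it also yields all negative moments $\bbE[\Zn^{-p}] = \bbE[e^{-p\log\Zn}] < \infty$ uniformly in $N$ by integrating the tail. The supremum over $(s,t)$ in \eqref{AB3bis} is then obtained from the uniform-in-$(s,t)$ tail plus a union bound over a fine grid, using the already-established continuity moment estimates to control the oscillation between grid points.

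The free partition function $\Znf(Nt)$ is treated identically: it has the same chaos expansion structure with a boundary term, the same deterministic Lipschitz bound $|\partial_{\omega_k}\log\Znf| \le \beta_N$ holds, and $\bbE[\Znf] = 1$; so every step above goes through verbatim with $\sup_{0\le s\le t\le T}$ replaced by $\sup_{0\le t\le T}$.

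The main obstacle I anticipate is the passage from pointwise-in-$(s,t)$ $L^p$ (and tail) bounds to the \emph{uniform} bounds with the supremum inside — i.e.\ the chaining step — because one needs increment moment estimates that are uniform in $N$ and that do not degrade near the diagonal $s=t$ (where $\Zn \to 1$ and fluctuations are small, so this is really a question of getting the right power of $|t-s|$). Controlling the conditioned partition function near the diagonal, where the renewal constraint $a\in\tau, b\in\tau$ with $b-a$ small forces atypical behavior, requires the sharp form of \eqref{assRP2}; this is exactly where the mild regularity assumption is used in an essential way, and it is the part of the argument that must be done carefully rather than by soft arguments.
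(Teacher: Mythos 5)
Your treatment of the positive moments \eqref{AB3}, $p\ge 0$, is essentially the same as the paper's: one upgrades the increment moment estimates of \cite[\S 2.2]{CRZ14} to a bound on the supremum via a Garsia/Kolmogorov-type chaining argument. That part is fine.

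The concentration step contains a genuine error. You claim that the Euclidean Lipschitz constant of $f(\omega)=\log \Zn(Ns,Nt)$ is at most $\beta_N\sqrt{NT}$ and that this is \emph{bounded uniformly in $N$ since $\alpha<1$}. But with $\beta_N = \hb\, L(N)/N^{\alpha-\frac12}$ one gets
\[
\beta_N\sqrt{NT} \;=\; \hb\, T^{1/2}\, L(N)\, N^{1-\alpha}\,,
\]
which \emph{diverges} as $N\to\infty$ precisely because $\alpha<1$. (You wrote the quantity $N^{\alpha-1}$ in the denominator, i.e.\ multiplication by $N^{1-\alpha}$, and then read off the wrong conclusion.) So $\log\Zn$ is \emph{not} uniformly Lipschitz in $\omega$ over $N$, and \eqref{assD2} cannot be applied directly to it; this is exactly the obstruction flagged in the sentence following Proposition~\ref{p1results}. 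The deterministic bound $|\partial_{\omega_k}\log\Zn|\le \beta_N$ is correct, but the sum of its squares over $O(N)$ coordinates does not stay bounded.

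The paper circumvents this by a different mechanism, which is the actual content of the proof: first one shows (Lemma~\ref{th:improve}, Proposition~\ref{th:conve}) that \eqref{assD2} implies a concentration inequality for \emph{convex} functions that are not globally Lipschitz, of the form
\[
\bbP\big(f(\omega)\le a-t\big)\,\bbP\big(f(\omega)\ge a,\ |\nabla f(\omega)|\le c\big)\le C_1'\exp\!\big(-(t/c)^\gamma/C_2'\big)\,.
\]
One then has to prove that the ``good event'' $\{f_{N,t}(\omega)\ge a,\ |\nabla f_{N,t}(\omega)|\le c\}$ has probability bounded below \emph{uniformly in $N$ and $t$}, for suitable fixed $a,c$. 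The lower bound on $\bbP(f_{N,t}\ge a)$ comes from Paley--Zygmund together with the first- and second-moment asymptotics of Lemma~\ref{th:boundsren}; the control on $\bbP(|\nabla f_{N,t}|> c)$ is where the real work is, and it hinges on expressing $|\nabla f_{N,t}|^2$ through the \emph{intersection renewal} $\tau\cap\tau'$ (with tail exponent $2\alpha-1>0$) and applying Theorem~\ref{ThmUConv}. In other words, the gradient is not deterministically small, but it is \emph{typically} small, and that is enough for the convexity-based concentration. Your proof as written skips all of this by asserting a Lipschitz bound that fails, so the negative-moment bound and \eqref{AB3bis} are not actually established by your argument.

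One smaller remark: in \eqref{AB3bis} the supremum is \emph{outside} the probability, so no grid/union-bound or chaining step is needed for that relation; a uniform-in-$(s,t)$ pointwise tail bound suffices. The chaining is only required for the supremum \emph{inside} the expectation in \eqref{AB3}.
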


For relation \eqref{AB3bis} we use the concentration assumptions \eqref{assD2}
on the disorder. However, since $\log \Zn$ is not a uniformly (over $N\in\N$) 
Lipschitz function of $\omega$, some work is needed.

\smallskip

Finally, since the convergences in distribution \eqref{eq:convd}, \eqref{eq:convd2} hold
in the space of continuous functions, we can easily deduce analogues of 
\eqref{AB3}, \eqref{AB3bis} for the continuum partition functions.

\begin{corollary}\label{p1resultscont}
Fix $\alpha \in (\frac{1}{2},1)$, $\hb > 0$, $\hh \in \R$.
For all $T > 0$ and $p \in \R$
there exist finite constants $A_T$, $B_T$, $C_{p,T}$ (depending also
on $\alpha, \hb, \hh$) such that
\begin{gather}\label{AB3cont} 
 \bbE \left[ \sup_{0 \le s \le t \le T}
 \Z(Ns,Nt) ^p\right]\leq C_{p,T} \,,  \\
	\label{AB3biscont}
	\sup_{0\le s \le t \le T} \bbP\Big( \log \Z(Ns,Nt) \le - x \Big)
	\le A_T \exp \left( - \frac{x^\gamma}{B_T} \right) \,, \qquad \forall x \ge 0 \,.
\end{gather}
The same relations hold for the free partition function $\Zf(t)$
(replacing $\sup_{0\le s \le t \le T}$ with $\sup_{0\le t \le T}$).
\end{corollary}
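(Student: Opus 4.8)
The plan is to deduce Corollary~\ref{p1resultscont} from the discrete estimates of Proposition~\ref{p1results} by passing to the limit $N\to\infty$, using the almost sure uniform convergence of partition functions provided by Theorem~\ref{PropByProd} (at the fixed value of $\hh$ under consideration; alternatively one could use \eqref{eq:convd}--\eqref{eq:convd2} in the space of continuous functions together with the Skorokhod representation theorem). Fix $T>0$, $\alpha\in(\tfrac12,1)$, $\hb>0$, $\hh\in\R$, and work on the coupling space of Theorem~\ref{PropByProd}, on which, $\bbP(\dd\omega,\dd W)$-a.s., $\Zn(Ns,Nt)\to\Z(s,t)$ uniformly for $0\le s\le t\le T$, and $\Znf(Nt)\to\Zf(t)$ uniformly for $0\le t\le T$. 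Since $f\mapsto\sup f$ is $1$-Lipschitz in the uniform norm, this gives $\sup_{0\le s\le t\le T}\Zn(Ns,Nt)\to\sup_{0\le s\le t\le T}\Z(s,t)$ a.s., and likewise for the free partition function. By Proposition~\ref{th:continuous}, $\Z(s,t)$ is a.s.\ continuous and strictly positive on the compact triangle $\{0\le s\le t\le T\}$ (and $\Zf(t)$ on $[0,T]$), hence a.s.\ $0<\inf\Z\le\sup\Z<\infty$ there; in particular $x\mapsto x^p$ is a.s.\ continuous at $\sup_{0\le s\le t\le T}\Z(s,t)$ for every $p\in\R$, so $\big(\sup_{0\le s\le t\le T}\Zn(Ns,Nt)\big)^p\to\big(\sup_{0\le s\le t\le T}\Z(s,t)\big)^p$ a.s.

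For the moment bounds \eqref{AB3cont}, observe that $\big(\sup_{0\le s\le t\le T}\Zn(Ns,Nt)\big)^p$ is non-negative for every $p\in\R$, so Fatou's lemma together with \eqref{AB3} (which holds for all $p\in\R$ under \eqref{assD1}--\eqref{assD2}) yields
\begin{equation*}
\bbE\Big[\big(\textstyle\sup_{0\le s\le t\le T}\Z(s,t)\big)^p\Big]\le\liminf_{N\to\infty}\bbE\Big[\big(\textstyle\sup_{0\le s\le t\le T}\Zn(Ns,Nt)\big)^p\Big]\le C_{p,T}\,,
\end{equation*}
and the same argument for $\Znf(Nt)$, $\Zf(t)$ on $[0,T]$ gives the corresponding bound; one renames the constants, keeping them finite.

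For the deviation inequality \eqref{AB3biscont}, fix $(s,t)$ with $0\le s\le t\le T$, fix $x\ge 0$, and use the a.s.\ pointwise convergence $\Zn(Ns,Nt)\to\Z(s,t)>0$. For any $\epsilon\in(0,x)$, on the event $\{\log\Z(s,t)\le -x\}$ one has $\log\Zn(Ns,Nt)\le -x+\epsilon$ for all $N$ large enough, hence $\{\log\Z(s,t)\le -x\}\subseteq\liminf_{N}\{\log\Zn(Ns,Nt)\le -x+\epsilon\}$. By Fatou's lemma for events and \eqref{AB3bis},
\begin{equation*}
\bbP\big(\log\Z(s,t)\le -x\big)\le\liminf_{N\to\infty}\bbP\big(\log\Zn(Ns,Nt)\le -x+\epsilon\big)\le A_T\,\exp\!\Big(-\tfrac{(x-\epsilon)^\gamma}{B_T}\Big)\,.
\end{equation*}
Letting $\epsilon\downarrow 0$ and then taking the supremum over $0\le s\le t\le T$ gives \eqref{AB3biscont}; the case of $\Zf(t)$ is identical, replacing the triangle by $[0,T]$ and using \eqref{AB3bis} for $\Znf$.

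\textbf{Main obstacle.} There is no substantial difficulty; the only point that needs care is the passage to negative moments and to $\log$, both of which require the a.s.\ strict positivity of the continuum partition function (and, for the sup of negative powers, the a.s.\ positivity of its infimum) on the compact parameter set---precisely what the continuity statement in Proposition~\ref{th:continuous} supplies. Everything else is Fatou's lemma applied along the a.s.\ coupling of Theorem~\ref{PropByProd}.
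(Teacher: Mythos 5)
Your approach is correct and is the one the paper intends (it only states that the corollary ``can easily be deduced''): couple the discrete and continuum partition functions via Theorem~\ref{PropByProd} (or equivalently Skorokhod representation for the functional convergence), and pass the discrete estimates \eqref{AB3}--\eqref{AB3bis} to the limit by Fatou. The one small slip is in the case $p<0$: the quantity appearing in \eqref{AB3cont} is $\sup_{0\le s\le t\le T}\Z(s,t)^p=\big(\inf_{0\le s\le t\le T}\Z(s,t)\big)^p$, not $\big(\sup_{0\le s\le t\le T}\Z(s,t)\big)^p$, so the a.s.\ convergence you need is $\big(\inf\Zn\big)^p\to\big(\inf\Z\big)^p$; this follows from your argument unchanged, since $f\mapsto\inf f$ is also $1$-Lipschitz in the sup norm and, as you already note, $\inf_{0\le s\le t\le T}\Z(s,t)>0$ a.s.\ by continuity and strict positivity (Proposition~\ref{th:continuous}). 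With that adjustment, everything holds.
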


%
%

\subsection{Organization of the paper}

The paper is structured as follows.
\begin{itemize}
\item We first prove Proposition~\ref{p1results}
and Corollary~\ref{p1resultscont} in Section~\ref{sectionNB}.

\item Then we prove Theorem~\ref{PropByProd} in Section~\ref{ContPinnModel}.

\item In Section~\ref{secproofthm2} we prove our main result,
Theorem~\ref{Thm2}. Our approach yields as a by-product
the existence of the continuum free energy, i.e.\ the core
of Theorem~\ref{Thm1}.

\item The proof of Theorem~\ref{Thm1} is
easily completed in Section~\ref{sec:Thm1}.

\item Finally some more technical points have been deferred to the Appendices~\ref{sec:rege}
and~\ref{sec:misc}.
\end{itemize}


\section{Proof of Proposition \ref{p1results}
and Corollary \ref{p1resultscont}}\label{sectionNB}

In this section we prove Proposition \ref{p1results}.
Taking inspiration from \cite{cf:Moreno}, we
 first prove \eqref{AB3bis}, using concentration results, and later
we prove \eqref{AB3}. 
We start with some preliminary results.

\subsection{Renewal results}

Let $(\sigma = (\sigma_n)_{n\in\N_0},\P)$ be a renewal process such that
$\P(\sigma_1 = 1) > 0$ and
\begin{equation}\label{eqsigmarenewal}
w(n):=\P(n\in\sigma)\overset{n\to\infty}{\sim} \frac{1}{M(n) \, n^{1-\nu}},
\quad \text{with} \quad \nu\in (0,1) \quad 
\text{and} \quad M(\cdot) \quad \text{slowly varying}\,.
\end{equation}
This includes any renewal process $\tau$ satisfying \eqref{assRP0}
with $\alpha \in (0,1)$, in which case \eqref{eqsigmarenewal} holds
with $\nu = \alpha$ and $M(n) = L(n)/C_\alpha$,
by \eqref{assRP1}. When $\alpha \in (\frac{1}{2},1)$,
another important example is given by the 
\emph{intersection renewal} $\sigma = \tau \cap \tau'$, where
$\tau'$ is an independent copy of $\tau$: since
$w(n) = \P(n \in \tau \cap \tau') = \P(n \in \tau)^2$ in this case, by \eqref{assRP1}
relation \eqref{eqsigmarenewal}
holds with $\nu =2\alpha -1$ and $M(n) = L(n)^2 / C_\alpha^2$.

For $N\in\N_0$ and $\delta\in\R$, let 
$\Psi_\delta(N), \Psi_\delta^{\mathtt{c}}(N)$ denote the (deterministic) functions
\begin{equation}\label{eq:Psi}
  \Psi_{\delta}(N)=\E\left[e^{\delta \sum_{n=1}^{N}\ind_{n\in\sigma}}
  \right] \,, \qquad
  \Psi_{\delta}^{\mathtt{c}}(N)=\E\left[e^{\delta \sum_{n=1}^{N-1}\ind_{n\in\sigma}}
  \absv N\in \sigma\right] \,,
\end{equation}
which are just the partition functions of a homogeneous (i.e.\ non disordered) pinning model.
In the next result, which is essentially a deterministic version of 
\cite[Theorem 2.1]{CRZ14} (see also \cite{Soh09}), we determine their limits
when $N \to \infty$ and $\delta = \delta_N \to 0$ as follows (for fixed $\hd \in \R$):
\begin{equation}\label{eq:deltaN}
	\delta_N \sim \hd \frac{M(N)}{N^{\nu}} \,.
\end{equation}

\begin{theorem}\label{ThmUConv}
Let the renewal $\sigma$ satisfy \eqref{eqsigmarenewal}.
Then the functions $(\Psi_{\delta_N}(Nt))_{t \in [0,\infty)}$, 
$(\Psi_{\delta_N}^{\mathtt{c}}(Nt))_{t \in [0,\infty)}$,
with $\delta_N$ as in \eqref{eq:deltaN} and
linearly interpolated for $Nt \not\in \N_0$,
converges as $N\to\infty$ respectively to
 \begin{align}\label{RiemConv0}
\bPsi_{\hd}^{\nu}(t) & = 1+ \sum_{k = 1}^\infty \hd^k
\idotsint\limits_{0 < t_1<\cdots<t_k < t} 
\frac{1}{t_1^{1-\nu} (t_2 - t_1)^{1-\nu}
\cdots (t_k - t_{k-1})^{1-\nu}}\prod_{i=1}^k \dd {t_i} \,,\\
\label{RiemConv}
\bPsi_{\hd}^{\nu,\mathtt{c}}(t) & = 1+ \sum_{k = 1}^\infty \hd^k
\idotsint\limits_{0 < t_1<\cdots<t_k < t} 
\frac{t^{1-\nu} }{t_1^{1-\nu} (t_2 - t_1)^{1-\nu}
\cdots (t_k - t_{k-1})^{1-\nu}(t-t_k)^{1-\nu}}\prod_{i=1}^k \dd {t_i} \,,
\end{align}
where the convergence is uniform on compact subsets of $[0,\infty)$.
The limiting functions $\bPsi_{\hd}^{\nu}(t), \bPsi_{\hd}^{\nu,\mathtt{c}}(t)$ are
strictly positive, finite and continuous in $t$.
\end{theorem}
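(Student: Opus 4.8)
The plan is to prove convergence of the partition functions $\Psi_{\delta_N}(Nt)$ and $\Psi_{\delta_N}^{\mathtt{c}}(Nt)$ by expanding them as polynomial chaos series and passing to the limit term by term, controlling the tail of the series uniformly in $N$. First I would write, using the renewal structure and $\sum_{n=1}^N \ind_{n\in\sigma} = \sum_{k\ge1}\ind_{\sigma_k\le N}$, the expansion
\begin{equation*}
\Psi_\delta(N) = \E\Big[\prod_{n=1}^N \big(1 + (e^\delta-1)\ind_{n\in\sigma}\big)\Big]
= \sum_{k=0}^{N} (e^\delta-1)^k \sumtwo{0 < n_1 < \cdots < n_k \le N}{} w(n_1)\, w(n_2-n_1)\cdots w(n_k-n_{k-1}),
\end{equation*}
and similarly for $\Psi_\delta^{\mathtt{c}}(N)$ with an extra factor $w(N-n_k)/w(N)$ inside the sum. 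Since $e^{\delta_N}-1 \sim \delta_N \sim \hd\, M(N)/N^\nu$, the $k$-th term is asymptotically $\hd^k$ times a Riemann sum for the $k$-fold integral in \eqref{RiemConv0} (resp.\ \eqref{RiemConv}): using \eqref{eqsigmarenewal} in the form $w(n) \sim n^{\nu-1}/M(n)$ together with the scaling $M(Nx)/M(N)\to 1$ locally uniformly (Potter bounds / uniform convergence theorem for slowly varying functions, cf.\ \cite{BGT89}), each factor $w(n_{i+1}-n_i)$ contributes $N^{\nu-1}/M(N)$ times $(t_{i+1}-t_i)^{\nu-1}$ up to a vanishing error, and the $k$ factors of $N^{\nu-1}/M(N)$ combine with $(e^{\delta_N}-1)^k \sim \hd^k M(N)^k/N^{\nu k}$ and the volume element $N^{-k}$ of the lattice sum to give exactly $\hd^k \int_{0<t_1<\cdots<t_k<t}\prod (t_{i+1}-t_i)^{\nu-1}\,\dd t_i$.

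The delicate point — and the main obstacle — is to justify the interchange of the limit $N\to\infty$ with the infinite sum over $k$, i.e.\ to get a summable, $N$-uniform, $t$-locally-uniform bound on the $k$-th term. For this I would prove a one-sided estimate $w(n)\le C\, n^{\nu-1-\epsilon'}$ is \emph{too weak}; instead one uses $w(n) \le C n^{\nu-1} \ell(n)$ with $\ell$ slowly varying and, via Potter's bound, $w(n)\le C_\epsilon n^{\nu-1+\epsilon}$ for any small $\epsilon>0$ and $n$ large (with a separate uniform bound $w(n)\le C$ for small $n$, which holds since $w$ is bounded). Feeding this into the $k$-fold lattice sum and comparing with an integral gives a bound of the form $C^k |\hd|^k \, t^{k(\nu-\epsilon)}/\Gamma(k(\nu-\epsilon))$-type (a Mittag-Leffler tail), which is summable in $k$ and locally bounded in $t$, uniformly in $N$; dominated convergence for series then finishes the convergence of $\Psi_{\delta_N}(Nt)$. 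The argument for $\Psi_{\delta_N}^{\mathtt{c}}$ is identical once one notes $w(N-n_k)/w(N)$ is controlled by $((t-t_k)/1)^{\nu-1}$-type factors plus uniform convergence of $M$, though the singularity at $t_k\uparrow t$ needs the same Potter-type estimate near the endpoint.

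Uniform convergence on compacts of $[0,\infty)$ then follows because the limiting series \eqref{RiemConv0}, \eqref{RiemConv} are continuous in $t$ (each integral term is continuous, the series converges locally uniformly by the same Mittag-Leffler bound), and the discrete objects $\Psi_{\delta_N}(N\cdot)$, being nondecreasing step functions (linearly interpolated), converge pointwise to a continuous limit — so a Dini-type / Pólya-type argument upgrades pointwise to locally uniform convergence; alternatively the $N$-uniform series bound directly gives equicontinuity. Finally, strict positivity is immediate since every term in \eqref{RiemConv0}, \eqref{RiemConv} is nonnegative and the $k=0$ term equals $1$ (for $\hd<0$ one instead notes $\Psi_{\delta_N}(Nt)\ge \P(\sigma\cap(0,Nt]=\emptyset)$, which has a strictly positive continuum limit, or argues directly that the alternating series has a positive sum via the probabilistic representation); finiteness and continuity in $t$ have already been established by the uniform bound, completing the proof.
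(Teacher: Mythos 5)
Your convergence argument --- binomial expansion, Riemann-sum approximation of each $k$-fold term, domination via Potter bounds, and a summable $N$-uniform tail control --- is essentially the paper's argument, so the core of the proof is sound. However, two of your finishing steps have genuine gaps.

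The strict positivity argument for $\hd<0$ fails. You propose the lower bound $\Psi_{\delta_N}(Nt)\ge\P(\sigma\cap(0,Nt]=\emptyset)=\P(\sigma_1>Nt)$ and assert this has a strictly positive continuum limit, but under \eqref{eqsigmarenewal} with $\nu\in(0,1)$ a standard Tauberian argument gives $\P(\sigma_1>n)\sim c\,M(n)/n^\nu\to 0$, so this lower bound vanishes in the limit and gives no information; moreover it is unavailable at all in the constrained case. The paper instead notes that $\bPsi_{\hd}^{\nu}(t)$ and $\bPsi_{\hd}^{\nu,\mathtt c}(t)$ are nonnegative (as pointwise limits of positive quantities), nondecreasing in $\hd$, and real-analytic in $\hd$: if one vanished at some $\hd_0$ it would vanish on all of $(-\infty,\hd_0]$ by monotonicity and nonnegativity, hence identically by analyticity, contradicting the value $1$ at $\hd=0$.

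Your route to uniform convergence is also incomplete for the constrained case. The P\'olya/Dini argument requires monotonicity of $t\mapsto\Psi_{\delta_N}(Nt)$, which holds for the free partition function but not for $\Psi^{\mathtt c}_{\delta_N}(Nt)$, since the conditioning event $\{Nt\in\sigma\}$ changes with $t$ and there is no a priori monotonicity. Your fallback that ``the $N$-uniform series bound directly gives equicontinuity'' is not substantiated: a uniform bound on the magnitude of each term in the expansion does not by itself yield equicontinuity in $t$. The paper sidesteps both issues via a subsequence argument: uniform convergence on $[0,T]$ is reduced to showing $\Psi^{\mathtt c}_{\delta_N}(Nt_N)\to\bPsi^{\nu,\mathtt c}_{\hd}(t)$ for every sequence $t_N\to t$ in $[0,T]$, which then follows from the term-by-term Riemann-sum convergence combined with the uniform tail control you already have.
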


\noindent
Before proving of Theorem~\ref{ThmUConv}, we summarize some useful consequences in the next Lemma.

\begin{lemma}\label{th:boundsren}
Let $\tau$ be a renewal process satisfying \eqref{assRP0}
with $\alpha \in (\frac{1}{2},1)$ and let $\omega$ satisfy \eqref{assD1}.
For every $\hb > 0$, $\hh \in \R$, defining $\beta_N, h_N$  as in \eqref{eqParam}, one has:
\begin{equation}\label{eq:boundsren}
	\lim_{N\to\infty} \bbE\left[ \Zn(0,Nt) \right] = \bPsi_{C_\alpha \hh}^{\alpha,\mathtt{c}}(t) \,,
	\qquad \lim_{N\to\infty} 
	\bbE\left[ \left( \Zno(0,Nt) \right)^2 \right] = 
	\bPsi_{C_\alpha^2 \hb^2}^{2\alpha-1,\mathtt{c}}(t) \,,
\end{equation}
uniformly on compact subsets of $t \in [0,\infty)$. Consequently
\begin{equation} \label{eq:rholambda}
	\rho := \inf_{N\in\N} \inf_{t\in [0,1]} \mathbb{E}\left[\Zn(0,Nt)\right] > 0 \,,
	\qquad
	\lambda := \sup_{N\in\N} \sup_{t\in [0,1]} 
	\mathbb{E}\left[\left(\Zno(0,Nt)\right)^2\right] < \infty \,.
\end{equation}
Analogous results hold for the free partition function.
\end{lemma}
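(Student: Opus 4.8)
\textbf{Proof plan for Lemma~\ref{th:boundsren}.}
The plan is to reduce the two convergences in \eqref{eq:boundsren} to an application of Theorem~\ref{ThmUConv} for suitable choices of the renewal $\sigma$ and the parameter $\hd$, and then to deduce \eqref{eq:rholambda} by combining the convergence with elementary positivity/finiteness of the prelimit quantities on an initial window. First I would expand $\bbE[\Zn(0,Nt)]$ by exchanging the expectations over $\omega$ and $\tau$: since $\bbE[e^{\beta_N \omega_k - \Lambda(\beta_N)}] = 1$, writing $e^{\beta_N \omega_k - \Lambda(\beta_N) + h_N} = e^{h_N}\,e^{\beta_N \omega_k - \Lambda(\beta_N)}$ and averaging over $\omega$ kills every disorder factor, leaving
\begin{equation*}
\bbE\big[\Zn(0,\NNt)\big] = \E\Big[ e^{h_N \sum_{k=1}^{\NNt - 1}\ind_{k\in\tau}} \,\Big|\, \NNt \in \tau\Big] = \Psi^{\mathtt c}_{h_N}(\NNt)
\end{equation*}
with $\sigma = \tau$, so $\nu = \alpha$ and $M(n) = L(n)/C_\alpha$ by \eqref{assRP1}. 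Matching $h_N = \hh\, L(N)/N^\alpha$ against \eqref{eq:deltaN} gives $\hd = C_\alpha \hh$, and Theorem~\ref{ThmUConv} yields the first limit, uniformly on compacts. For the second limit, I would compute $\bbE[(\Zno(0,\NNt))^2]$ by introducing an independent copy $\tau'$ of $\tau$; expanding the square and averaging over $\omega$ now leaves factors $\bbE[e^{\beta_N(\omega_k - \Lambda(\beta_N)/\beta_N)(\ind_{k\in\tau}+\ind_{k\in\tau'})}]$, and using $\ind_{k\in\tau}+\ind_{k\in\tau'} \in \{0,1,2\}$ together with $\Lambda(\beta)=\tfrac12\beta^2 + o(\beta^2)$ one gets, on the event both sets contain $\NNt$, a weight $e^{(\Lambda(2\beta_N)-2\Lambda(\beta_N))\sum \ind_{k\in\tau\cap\tau'}} = e^{(\beta_N^2 + o(\beta_N^2))\sum_{k}\ind_{k\in\tau\cap\tau'}}$. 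Thus $\bbE[(\Zno(0,\NNt))^2] = \Psi^{\mathtt c}_{\delta_N}(\NNt)$ for the intersection renewal $\sigma = \tau\cap\tau'$, for which \eqref{eqsigmarenewal} holds with $\nu = 2\alpha - 1$ and $M(n) = L(n)^2/C_\alpha^2$ (as noted right after \eqref{eqsigmarenewal}), and with effective parameter $\delta_N \sim \beta_N^2 = \hb^2 L(N)^2/N^{2\alpha-1}$, i.e.\ $\hd = C_\alpha^2 \hb^2$. Theorem~\ref{ThmUConv} again gives the claimed limit $\bPsi^{2\alpha-1,\mathtt c}_{C_\alpha^2\hb^2}(t)$, uniformly on compacts. (The free-partition-function versions are obtained identically, replacing $\Psi^{\mathtt c}$ by $\Psi$ and using \eqref{RiemConv0} in place of \eqref{RiemConv}.)

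For \eqref{eq:rholambda} I would argue as follows. The map $t \mapsto \bPsi^{\alpha,\mathtt c}_{C_\alpha\hh}(t)$ is continuous and strictly positive on the compact $[0,1]$ (by Theorem~\ref{ThmUConv}), hence bounded below by some $c_0 > 0$; by the uniform convergence just established, $\bbE[\Zn(0,Nt)] \ge c_0/2$ for all $t\in[0,1]$ once $N \ge N_0$. For the finitely many $N < N_0$, $\bbE[\Zn(0,Nt)]$ is a finite minimum (over $Nt \in \{0,1,\dots\}$, with linear interpolation in between) of strictly positive numbers — strict positivity being immediate since the defining expectation in \eqref{discrcondfor} is an expectation of a strictly positive random variable under a probability measure with full support on $\{\tau_1 = 1\}$-type configurations — so the infimum over $N\in\N$, $t\in[0,1]$ is a positive number $\rho$. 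The bound for $\lambda$ is symmetric: $t\mapsto \bPsi^{2\alpha-1,\mathtt c}_{C_\alpha^2\hb^2}(t)$ is continuous hence bounded on $[0,1]$, the uniform convergence controls all large $N$, and each of the finitely many remaining $N$ contributes a finite supremum, since $\bbE[(\Zno(0,Nt))^2]$ is a bounded (indeed continuous, piecewise-affine) function of $t$ on $[0,1]$.

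The only genuinely delicate point is the disorder average in the second-moment computation: one must check that after integrating out $\omega$ the resulting weight is \emph{exactly} of the homogeneous pinning form on the intersection renewal with parameter $\delta_N = \Lambda(2\beta_N) - 2\Lambda(\beta_N)$, and that this $\delta_N$ satisfies \eqref{eq:deltaN} with $\hd = C_\alpha^2\hb^2$ — which uses $\Lambda(\beta) \sim \tfrac12\beta^2$ as $\beta\to 0$ (a consequence of $\bbE[\omega_1]=0$, $\mathbb V(\omega_1)=1$ in \eqref{assD1}) so that $\Lambda(2\beta_N)-2\Lambda(\beta_N) = (2\beta_N^2 - \beta_N^2) + o(\beta_N^2) = \beta_N^2(1+o(1))$. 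Everything else is bookkeeping: the exchange of sums/expectations is justified by nonnegativity (Tonelli), and the passage from "convergence on compacts + finitely many exceptional $N$" to "uniform bound over all $N$" is the standard argument sketched above. Once \eqref{eq:boundsren} and \eqref{eq:rholambda} are in hand, they feed directly into the proof of Proposition~\ref{p1results}, where $\rho$ and $\lambda$ serve as the normalizing constants controlling, respectively, the negative and positive second moments of the conditioned partition functions.
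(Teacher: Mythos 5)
Your proposal is correct and follows essentially the same route as the paper's own proof: both reduce the first moment to $\Psi^{\mathtt{c}}_{h_N}$ for $\sigma=\tau$ (with $\hd=C_\alpha\hh$), and the second moment (after integrating out $\omega$ against an independent copy $\tau'$) to $\Psi^{\mathtt{c}}_{\Lambda(2\beta_N)-2\Lambda(\beta_N)}$ for the intersection renewal $\sigma=\tau\cap\tau'$ (with $\hd=C_\alpha^2\hb^2$ using $\Lambda(2\beta)-2\Lambda(\beta)\sim\beta^2$), then invoke Theorem~\ref{ThmUConv} for the uniform convergence and deduce \eqref{eq:rholambda} by combining this with continuity/positivity of the limits and a finite min/max over small $N$. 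The only cosmetic differences are your slightly roundabout justification of strict positivity for fixed $N$ (the paper simply notes it is a minimum over finitely many positive values $\bbE[\Zn(0,n)]$) and your use of $\lfloor Nt\rfloor$ where the paper implicitly relies on the linear interpolation built into $\Psi^{\mathtt{c}}_{\delta}$; neither affects the argument.
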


\begin{proof}
We focus on the constrained partition function (the free one is analogous),
starting with the first relation in \eqref{eq:boundsren}.
By \eqref{discrcondfor}, for $Nt\in\N_0$ we can write
\begin{equation*}
	\bbE\left[ \Zn(0,Nt) \right] =
	\E\left[e^{h_N \sum_{k=1}^{Nt}\ind_{k\in\tau}}
	\Big| \, Nt \in \tau \right] = \Psi_{h_N}^{\mathtt{c}}(Nt) \,,
\end{equation*}
where we used \eqref{eq:Psi} with $\sigma = \tau$. As we observed
after \eqref{eqsigmarenewal}, we have $M(n) = L(n)/C_\alpha$ in this case,
so comparing \eqref{eq:deltaN} with \eqref{eqParam} we see that
$h_N \sim \delta_N$ with $\hd = C_\alpha \hh$.
Theorem~\ref{ThmUConv} then yields \eqref{eq:boundsren}.

Next we prove the second relation in \eqref{eq:boundsren}.
Denoting by $\tau'$ an independent copy of $\tau$, note that
$\bbE[e^{(\beta \omega_k - \Lambda(\beta))(\ind_{k\in\tau}+\ind_{k\in\tau'})}]
= e^{(\Lambda(2\beta)-2\Lambda(\beta))\ind_{k\in\tau\cap\tau'}}$.
Then, again by \eqref{discrcondfor}, 
for $h_N = 0$ we can write
\begin{equation} \label{eq:moment2}
\begin{split}
	\bbE\left[ \left( \Zno(0,Nt) \right)^2 \right] & =
	\bbE \left[ \E \left[ \left. e^{\sum_{k=1}^{Nt-1}(\beta_N\omega_k-\Lambda(\beta_N))
	(\ind_{k\in\tau} + \ind_{k\in\tau'})}
	\right| Nt \in \tau \cap \tau' \right] \right] \\
	& = 	\E\left[e^{(\Lambda(2\beta_N)-2\Lambda(\beta_N)) 
	\sum_{k=1}^{Nt}\ind_{k\in\tau \cap \tau'}}
	\Big| \, Nt \in \tau \cap \tau' \right] = 
	\Psi_{\Lambda(2\beta_N)-2\Lambda(\beta_N)}^{\mathtt{c}}(Nt) \,,
\end{split}
\end{equation}
where in the last equality we
have applied \eqref{eq:Psi} with $\sigma = \tau \cap \tau'$,
for which $\nu = 2\alpha-1$ and $M(n) = L(n)^2/C_\alpha^2$.
Since $\Lambda(\beta) = \frac{1}{2}\beta^2 + o(\beta^2)$ as $\beta \to 0$,
by \eqref{assD1}, it follows that
$\Lambda(2\beta_N)-2\Lambda(\beta_N)\sim \beta_N^2 
\sim \delta_N$ with $\hd = C_\alpha^2 \hb^2$,
by \eqref{eqParam} and \eqref{eq:deltaN}.
In particular, Theorem~\ref{ThmUConv} yields the second relation in \eqref{eq:boundsren}.

Finally we prove \eqref{eq:rholambda}. Since the convergence \eqref{eq:boundsren} is uniform in $t$,
\begin{equation*}
	\lim_{N\to\infty} \inf_{t\in [0,1]} \bbE\left[ \Zn(0,Nt) \right] = 
	\inf_{t\in [0,1]} \bPsi_{C_\alpha \hh}^{\alpha,\mathtt{c}}(t) > 0 \,,
\end{equation*}
because $t \mapsto \bPsi_{C_\alpha^2 \hb^2}^{2\alpha-1,\mathtt{c}}(t)$ is continuous
and strictly positive. On the other hand, for fixed $N\in\N$,
\begin{equation*}
	\inf_{t\in[0,1]}\bbE\left[ \Zn(0,Nt) \right] =
	\min_{n \in \{0,1,\ldots,N\}} \bbE\left[ \Zn(0,n) \right] > 0 \,,
\end{equation*}
so the first relation in \eqref{eq:rholambda} follows. The second one is proved
with analogous arguments.
\end{proof}

\begin{proof}[Proof of Theorem~\ref{ThmUConv}]
The continuity in $t$ of $\bPsi_{\hd}^{\nu}(t), \bPsi_{\hd}^{\nu,\mathtt{c}}(t)$
can be checked directly by \eqref{RiemConv0}-\eqref{RiemConv}. They are also non-negative
and non-decreasing in $\hd$,
being pointwise limits of the non-negative and non-decreasing functions \eqref{eq:Psi}
(these properties are
not obviously seen from \eqref{RiemConv0}-\eqref{RiemConv}).
Since $\bPsi_{\hd}^{\nu}(t), \bPsi_{\hd}^{\nu,\mathtt{c}}(t)$ are clearly
analytic functions of $\hd$,
they must be strictly increasing in $\hd$, hence they must be strictly positive, as stated.

Next we prove the convergence results.
We focus on the constrained case $\Psi_{\delta_N}^{\mathtt{c}}(Nt)$,
since the free one is analogous (and simpler).
We fix $T \in (0,\infty)$ and show uniform convergence for $t\in [0,T]$.
This is equivalent, as one checks by contradiction,
to show that for any given sequence $(t_N)_{N\in\N}$ in $[0,T]$
one has $\lim_{N\to\infty} |\Psi_{\delta_N}^{\mathtt{c}}(Nt_N) 
- \bPsi_{\hd}^{\nu,\mathtt{c}}(t_N)| = 0$. By a subsequence
argument, we may assume that $(t_N)_{N\in\N}$ has a limit,
say $\lim_{N\to\infty} t_N = t \in [0,T]$, so we are left
with proving 
\begin{equation}\label{eq:goalto}
	\lim_{N\to\infty} \Psi_{\delta_N}^{\mathtt{c}}(N t_N) = \bPsi_{\hd}^{\nu,\mathtt{c}}(t) \,.
\end{equation}
We may safely assume that $N t_N \in \N_0$, since $\Psi_{\delta_N}(Nt)$
is linearly interpolated for $Nt \not\in \N_0$.
For notational simplicity we also assume that $\delta_N$ is exactly equal
to the right hand side of \eqref{eq:deltaN}.

Recalling \eqref{eqsigmarenewal}, for $0 < n_1 < \ldots < n_k < Nt_N$ we have
\begin{equation} \label{eq:facto}
	\E\left[ \ind_{n_1 \in \sigma} \ind_{n_2 \in \sigma}
	\cdots \ind_{n_k \in \sigma} \absv Nt_N\in \sigma\right] =
	\frac{w(n_1) w(n_2-n_1) \cdots w(Nt_N-n_k)}{w(Nt_N)} \,.
\end{equation}
Since $e^{\delta\ind_{n\in\tau}} = 1 + (e^\delta-1)\ind_{n\in\tau}$,
a binomial expansion in \eqref{eq:Psi} then yields
\begin{equation}\label{eq:RiSu}
\begin{split}
& \Psi_{\delta_N}^{\mathtt{c}} (Nt_N) =
1 + \sum_{k=1}^{Nt_N-1}(e^{\delta_N}-1)^k
\sum_{0 < n_1<\cdots<n_k < Nt_N}
\frac{w(n_1) w(n_2-n_1) \cdots w(Nt_N-n_k)}{w(Nt_N)} \\
& = 1 + 
\sum_{k=1}^{Nt_N-1}\left( \frac{e^{\delta_N}-1}{\delta_N} \right)^k \, \hd^k \,
\left\{ \frac{1}{N^k}
\sum_{0 < n_1<\cdots<n_k < Nt_N} \!\!\!
\frac{W_N(0,\tfrac{n_1}{N}) W_N(\tfrac{n_1}{N},\tfrac{n_2}{N}) \cdots 
W_N(\tfrac{n_k}{N},t_N)}{W_N(0,t_N)} \right\} ,
\end{split}
\end{equation}
where we have introduced for convenience the rescaled kernel
\begin{equation*}
	W_N(r,s) := M(N) N^{1-\nu}w(\lceil Ns \rceil - \lceil Nr \rceil) \,, \qquad 
	0 \le r \le s < \infty \,,
\end{equation*}
and $\lceil x \rceil := \min\{n\in\N: n \ge x\}$ denotes the upper integer part of $x$.
We first show the convergence of the term in brackets in
\eqref{eq:RiSu}, for fixed $k\in\N$; later we control the tail of the sum.

For any $\epsilon > 0$, uniformly for $r-s \ge \epsilon$
one has $\lim_{N\to\infty} W_N(r,s) = 1/(s-r)^{1-\nu}$,
by \eqref{eqsigmarenewal}. Then, for fixed $k\in\N$, the term in brackets in 
\eqref{eq:RiSu} converges to the corresponding integral in \eqref{RiemConv}
by a Riemann sum approximation,
provided the contribution to the sum given by
$n_i - n_{i-1} \le \epsilon N$ vanishes as $\epsilon \to 0$, uniformly in $N\in\N$.
We show this by a suitable upper bound on $W_N(r,s)$.
For any $\eta > 0$, by Potter's bounds \cite[Theorem 1.5.6]{BGT89},
we have $M(y)/M(x) \le C \max\{(\frac{y}{x})^\eta,(\frac{x}{y})^\eta\}$, hence
\begin{equation} \label{eq:bounddo}
	\frac{C^{-1}}{(r-s)^{1-\nu-\eta}} \le
	W_N(r,s) \le \frac{C}{(r-s)^{1-\nu+\eta}}, \qquad
	\forall N\in\N, \ \forall 0 \le r \le s \le T \,,
\end{equation}
for some constant $C = C_{\eta,T} < \infty$. Choosing $\eta \in (0,\nu)$,
the right hand side in \eqref{eq:bounddo} is integrable
and the contribution to the bracket in \eqref{eq:RiSu}
given by the terms with $n_i - n_{i-1} \le \epsilon N$ for some $i$
is dominated by the following integral 
\begin{equation}\label{eq:RiSu2}
\idotsint\limits_{0 < t_1<\cdots<t_k < t_N} 
\frac{C^{k+2}\, t_N^{1-\nu - \eta}}
{t_1^{1-\nu+\eta} (t_2 - t_1)^{1-\nu+\eta}
\cdots (t_N-t_k)^{1-\nu+\eta}}
\ind_{\{t_i - t_{i-1} \le \epsilon, \,  \text{for some}\, i =1,\cdots, k\}}
\prod_{i=1}^k \dd {t_i} \,.
\end{equation}
Plainly, for fixed $k\in\N$,
this integral vanishes as $\epsilon \to 0$ as required 
(we recall that $t_N \to t < \infty$).

It remains to show that the contribution to \eqref{eq:RiSu} given
by $k \ge M$ can be made small, \emph{uniformly in $N\in\N$}, by taking $M \in \N$ large enough.
By \eqref{eq:bounddo}, the
term inside the brackets in \eqref{eq:RiSu} can be bounded from above by the following integral
(where we make the change of variables $s_i = t_i/t_N$):
\begin{equation} \label{eq:ineq}
\begin{split}
	& \idotsint\limits_{0 < t_1<\cdots<t_k < t_N} 
	\frac{C^{k+2}\, t_N^{1-\nu- \eta} }{t_1^{1-\nu+\eta} (t_2 - t_1)^{1-\nu+\eta}
	\cdots (t_N-t_k)^{1-\nu+\eta}}
	\prod_{i=1}^k \dd {t_i} \\
	& =  \idotsint\limits_{0 < s_1<\cdots<s_k < 1} 
	\frac{ C^{k+2}\, t_N^{k(\nu-\eta)-2\eta}}
	{s_1^{1-\nu+\eta} (s_2 - s_1)^{1-\nu+\eta}
	\cdots (1-s_k)^{1-\nu+\eta}}
	\prod_{i=1}^k \dd {s_i} 
	\le \hat{C}_T^k \, c_1 e^{-c_2 k \log k} \,,
\end{split}
\end{equation}
for some constant $\hat{C}_T$ depending only on $T$ (recall that $t_N \to t \in [0,T]$),
where the inequality is proved in \cite[Lemma B.3]{CRZ13},
for some constants $c_1, c_2 \in (0,\infty)$, depending only on
$\nu, \eta$. This shows that \eqref{eq:goalto} holds
 and that the limits are finite, completing the proof.
\end{proof}

\subsection{Proof of relation \eqref{AB3bis}}

Assumption \eqref{assD2} is equivalent to a suitable
concentration inequality for the Euclidean distance 
$d(x, A) := \inf_{y\in A} |y-x|$ from a point $x \in \R^n$
to a convex set $A \subseteq \R^n$. 
More precisely, the following Lemma is quite standard (see
\cite[Proposition 1.3 and Corollary 1.4]{L05}, except for convexity
issues), but for completeness we give a proof in Appendix~\ref{sec:improve}.

\begin{lemma} \label{th:improve}
Assuming \eqref{assD2}, there exist $C_1', C_2' \in (0,\infty)$ such that
for every $n\in\N$ and for any convex set $A \subseteq \R^n$ one has
(setting $\omega = (\omega_1, \ldots, \omega_n)$ for short)
\begin{equation} \label{eq:distconc}
	\bbP(\omega \in A) \,
	\bbP(d(\omega, A) > t) \le C_1'
	\exp \left(-\frac{t^\gamma}{C_2'}\right) \,, \qquad \forall t \ge 0 \,.
\end{equation}
Viceversa, assuming \eqref{eq:distconc}, relation \eqref{assD2} holds
for suitable $C_1, C_2 \in (0,\infty)$.
\end{lemma}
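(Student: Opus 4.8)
The plan is to prove both implications using a single structural observation: for a convex set $A\subseteq\R^n$ the distance function $f_A:=d(\cdot,A)$ is convex \emph{and} $1$-Lipschitz, sublevel sets $\{x:f(x)\le c\}$ of a convex function are convex, and the $r$-neighborhood $A_r:=\{x:d(x,A)\le r\}$ of a convex set is convex (being a sublevel set of $f_A$). Thus throughout the argument we only ever apply \eqref{assD2} to functions of the form $f_A$ with $A$ convex, which is exactly what is allowed. Without loss of generality we may take $A$ closed (replacing $A$ by $\overline A$ changes neither $d(\cdot,A)$ nor increases the left-hand side of \eqref{eq:distconc}), and we may discard the trivial case $\bbP(\omega\in A)=0$.

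For the implication \eqref{assD2}$\Rightarrow$\eqref{eq:distconc}, write $p:=\bbP(\omega\in A)=\bbP(f_A(\omega)=0)$ and let $m$ be a median of $f_A(\omega)$. If $p\ge\frac12$ we may choose $m=0$, and then $\bbP(d(\omega,A)>t)=\bbP(f_A(\omega)>t)\le\bbP(|f_A(\omega)-m|\ge t)\le C_1e^{-t^\gamma/C_2}$, so $p\cdot\bbP(d(\omega,A)>t)\le C_1e^{-t^\gamma/C_2}$. If $p<\frac12$ then necessarily $m>0$, and applying \eqref{assD2} to $f_A$ with deviation $m$ gives $p\le\bbP(f_A(\omega)\le 0)=\bbP(f_A(\omega)-m\le -m)\le C_1e^{-m^\gamma/C_2}$. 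Moreover the neighborhood $A_m=\{f_A\le m\}$ is convex with $\bbP(\omega\in A_m)=\bbP(f_A(\omega)\le m)\ge\frac12$, so $0$ is a median of $d(\cdot,A_m)$; since a triangle-inequality argument gives $\{f_A>t\}\subseteq\{d(\cdot,A_m)\ge t-m\}$ for $t>m$, we get $\bbP(d(\omega,A)>t)\le C_1e^{-(t-m)^\gamma/C_2}$. Multiplying the two bounds and using the elementary convexity inequality $m^\gamma+(t-m)^\gamma\ge 2^{1-\gamma}t^\gamma$ yields $p\cdot\bbP(d(\omega,A)>t)\le C_1^2 e^{-2^{1-\gamma}t^\gamma/C_2}$ for $t>m$; for $t\le m$ one simply bounds $p\cdot\bbP(d(\omega,A)>t)\le p\le C_1e^{-m^\gamma/C_2}\le C_1e^{-t^\gamma/C_2}$. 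Taking $C_1':=\max(C_1,C_1^2)$ and $C_2':=2^{\gamma-1}C_2$ covers all cases.

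For the converse, assume \eqref{eq:distconc}. Let $f:\R^n\to\R$ be convex and $1$-Lipschitz with median $m:=M_f$, and set $A:=\{x:f(x)\le m\}$, which is convex with $\bbP(\omega\in A)\ge\frac12$. The Lipschitz property gives $f(x)-m\le d(x,A)$, hence $\{f>m+t\}\subseteq\{d(\cdot,A)>t\}$, so $\bbP(f(\omega)>m+t)\le\bbP(d(\omega,A)>t)\le 2C_1'e^{-t^\gamma/C_2'}$ using $\bbP(\omega\in A)\ge\frac12$. For the lower tail we cannot use a superlevel set (not convex), so instead let $A':=\{x:f(x)\le m-t\}$, which is convex; the Lipschitz property now gives, for every $x$ with $f(x)\ge m$, that $d(x,A')\ge f(x)-(m-t)\ge t$, whence $\bbP(d(\omega,A')\ge t)\ge\bbP(f(\omega)\ge m)\ge\frac12$, and \eqref{eq:distconc} applied to $A'$ forces $\bbP(\omega\in A')\le 2C_1'e^{-t^\gamma/C_2'}$, i.e. $\bbP(f(\omega)\le m-t)\le 2C_1'e^{-t^\gamma/C_2'}$. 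Adding the two tails gives \eqref{assD2} with $C_1:=4C_1'$ and $C_2:=C_2'$ (continuity from above takes care of $\ge$ versus $>$).

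The only genuinely delicate points are the convexity bookkeeping: because superlevel sets of a convex function need not be convex, the lower tail in the converse must be routed through the convex sets $A'$ rather than through a direct complement argument; and in the forward direction, when $\bbP(\omega\in A)<\frac12$ one cannot claim that $d(\cdot,A)$ has median $0$, so the neighborhood $A_m$ (whose distance function \emph{does} have median $0$) has to be introduced and then combined with the a priori bound $\bbP(\omega\in A)\le C_1e^{-m^\gamma/C_2}$ via the power-mean inequality. Everything else is a routine manipulation of the definitions, so no serious obstacle is expected.
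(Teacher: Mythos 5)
Your proof is correct, and both implications follow essentially the same route as the paper (Appendix~B.1). In the direction ``\eqref{eq:distconc}~$\Rightarrow$~\eqref{assD2}'' your $A=\{f\le m\}$ and $A'=\{f\le m-t\}$ are literally the paper's sublevel sets at levels $a=M_f$ and $a=M_f-t$; the two are the same argument written out symmetrically.

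In the direction ``\eqref{assD2}~$\Rightarrow$~\eqref{eq:distconc}'' you take a small detour that is unnecessary: you introduce the $m$-neighborhood $A_m$ so that $d(\cdot,A_m)$ has median $0$, and then run the triangle inequality. But there is no need to force the median to be $0$: the paper simply applies \eqref{assD2} to $f=d(\cdot,A)$ with its actual median $M_f$, getting \emph{both} bounds in one stroke, namely $\bbP(\omega\in A)=\bbP(f(\omega)\le 0)\le\bbP(|f(\omega)-M_f|\ge M_f)\le C_1 e^{-M_f^\gamma/C_2}$ and, for $t\ge M_f$, $\bbP(d(\omega,A)>t)=\bbP(f(\omega)>t)\le\bbP(|f(\omega)-M_f|>t-M_f)\le C_1 e^{-(t-M_f)^\gamma/C_2}$. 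The ``subtlety'' you flag (that one cannot claim $d(\cdot,A)$ has median $0$) is therefore not an obstacle; one never needs the median to be $0$. After this the two proofs merge: you multiply the two bounds and use $m^\gamma+(t-m)^\gamma\ge 2^{1-\gamma}t^\gamma$, which is exactly the paper's optimization $\min_{m\in[0,t]}\{\eta m^\gamma+(t-m)^\gamma\}$ specialized to $\eta=1$ (the paper keeps $\eta$ as a free parameter to record the slightly stronger statement~\eqref{eq:bet}, but for the lemma as stated $\eta=1$ suffices). Your case split $p\ge\frac12$ vs.\ $p<\frac12$ and the closure reduction are harmless bookkeeping that the paper absorbs into the treatment of $t<M_f$ via $\eta\ge\epsilon_\eta$ and $C_1\ge 1$. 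In short: correct, same method, one avoidable extra step.
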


The next result, proved in Appendix~\ref{sec:conve},
is essentially \cite[Proposition 1.6]{L05} and shows that
\eqref{eq:distconc} yields concentration bounds for
convex functions that are not necessarily (globally) Lipschitz.

\begin{proposition}\label{th:conve}
Assume that \eqref{eq:distconc} holds for every $n\in\N$ and for any convex set $A \subseteq \R^n$.
Then, for every $n\in\N$ and for every differentiable
convex function $f: \R^n \to \R$ one has
\begin{equation}\label{eq:keycon}
	\bbP(f(\omega) \le a-t) \, \bbP(f(\omega) \ge a, \, |\nabla f(\omega)| \le c)
	\le C_1' \exp \left(-\frac{(t/c)^\gamma}{C_2'}\right) \,, \quad \forall
	a \in \R, \ \forall t, c \in (0,\infty) \,,
\end{equation}
where $|\nabla f(\omega)| := \sqrt{\sum_{i=1}^n \big( \partial_i f(\omega) \big)^2 }$
denotes the Euclidean norm of the gradient of $f$.
\end{proposition}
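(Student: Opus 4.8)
The plan is to feed the \emph{sublevel set} of $f$ into the distance-concentration inequality \eqref{eq:distconc}. Fix $a \in \R$ and $t, c \in (0,\infty)$, and set $A := \{x \in \R^n : f(x) \le a - t\}$. Since $f$ is convex and (being differentiable) continuous, $A$ is closed and convex; if $A = \emptyset$ then $\bbP(f(\omega) \le a-t) = 0$ and \eqref{eq:keycon} is trivial, so I would assume $A \neq \emptyset$ and use that the Euclidean projection onto the closed convex set $A$ is well defined.

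The heart of the argument is the geometric claim that $\{f(\omega) \ge a,\ |\nabla f(\omega)| \le c\} \subseteq \{d(\omega, A) \ge t/c\}$. To see this, I would take $x$ with $f(x) \ge a$, let $y \in A$ be its projection onto $A$ (so that $|x-y| = d(x,A)$ and $f(y) \le a-t$), and use the first-order convexity inequality $f(y) \ge f(x) + \nabla f(x)\cdot(y-x)$ together with Cauchy--Schwarz:
\[
	t \le f(x) - f(y) \le \nabla f(x)\cdot(x-y) \le |\nabla f(x)|\,|x-y| \,,
\]
so that $d(x,A) = |x-y| \ge t/|\nabla f(x)| \ge t/c$ whenever $|\nabla f(x)| \le c$.

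Then I would simply apply \eqref{eq:distconc} to the convex set $A$: since $\{d(\omega,A) \ge t/c\} \subseteq \{d(\omega,A) > s\}$ for any $s < t/c$, and $\bbP(\omega \in A) = \bbP(f(\omega)\le a-t)$, one obtains
\[
	\bbP(f(\omega)\le a-t)\,\bbP\big(f(\omega)\ge a,\,|\nabla f(\omega)|\le c\big) \le \bbP(\omega\in A)\,\bbP\big(d(\omega,A) > s\big) \le C_1'\exp\!\big(-s^\gamma/C_2'\big) \,,
\]
and letting $s\uparrow t/c$ yields \eqref{eq:keycon}. I do not expect a genuine obstacle here: the argument is soft, and the only mildly delicate points are the empty-set case and the passage from the strict inequality $d>s$ in \eqref{eq:distconc} to the non-strict bound $d \ge t/c$, both handled as above. (If one wished to drop the differentiability hypothesis one could replace $\nabla f(x)$ by any subgradient of $f$ at $x$, but since $f$ is assumed differentiable this refinement is unnecessary.)
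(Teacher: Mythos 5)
Your proof is correct and follows essentially the same route as the paper's: define the sublevel set $A := \{f \le a-t\}$, use the first-order convexity inequality together with Cauchy--Schwarz to establish the inclusion $\{f(\omega)\ge a,\ |\nabla f(\omega)|\le c\}\subseteq\{d(\omega,A)\ge t/c\}$, and then apply \eqref{eq:distconc}. The only cosmetic difference is that you pass through the Euclidean projection onto $A$, whereas the paper simply takes the infimum of $|\omega-\omega'|$ over $\omega'\in A$; you are also slightly more careful about the empty-set case and the strict-versus-nonstrict inequality in \eqref{eq:distconc}, both of which the paper glosses over.
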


The usefulness of \eqref{eq:keycon} can be understood as follows:
given a family of functions $(f_i)_{i\in I}$,
if we can control the probabilities $p_i := \bbP(f_i(\omega) \ge a, \, |\nabla f_i(\omega)| \le c)$,
showing that $\inf_{i\in I} p_i = \theta > 0$ for some fixed $a, c$,
then \eqref{eq:keycon} provides a \emph{uniform control on the left tail}
$\bbP(f_i(\omega) \le a-t)$.
This is the key to the proof of relation \eqref{AB3bis}, as we now explain.

\smallskip

We recall that $\Zn(a,b)$ was defined in \eqref{discrcondfor}.
Our goal is to prove relation \eqref{AB3bis}. Some preliminary remarks:
\begin{itemize}
\item we consider the case $T=1$, for notational simplicity;

\item we can set $s=0$ in \eqref{AB3bis}, because $\Zn(a,b)$ has the same law as
$\Zn(0,b-a)$.

\end{itemize}
We can thus reformulate our goal \eqref{AB3bis} as follows: for some constants $A, B < \infty$
\begin{equation}\label{AB3bis0}
	\sup_{0 \le t \le 1} \bbP\Big( \log \Zn(0,Nt) \le - x \Big)
	\le A \exp \left( - \frac{x^\gamma}{B} \right) \,, \qquad \forall x \ge 0,
	\ \forall N\in\N \,.
\end{equation}
We can further assume that $h_N \le 0$,
because for $h_N > 0$ we have $\Zn(0,Nt) \ge \Zno(0,Nt)$ and
replacing $h_N$ by $0$
yields a stronger statement.
Applying Proposition~\ref{th:conve} to the functions
\begin{equation*}
	f_{N,t}(\omega)  := \log \Zn(0, Nt) \,,
\end{equation*}
relation \eqref{AB3bis0} is implied by the following result.

\begin{lemma}
Fix $\hb > 0$ and $\hh \le 0$.
There are constants $a \in \R$, $c \in (0,\infty)$
such that
\begin{equation*}
	\inf_{N\in\N} \inf_{t\in [0,1]} \bbP(f_{N,t}(\omega) \ge a, \ 
	|\nabla f_{N,t}(\omega)| \le c)
	=: \theta > 0 \,.
\end{equation*}
\end{lemma}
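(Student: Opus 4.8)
The plan is to pick the level $a$ from a second--moment (Paley--Zygmund) lower bound on $\Zn(0,Nt)$, and then to show that the gradient is large only with small probability via a first--moment (Markov) estimate; the gain is that \emph{on the event $\{\Zn(0,Nt)\ge e^{a}\}$ the random normalisation $\Zn(0,Nt)$ is itself bounded below}, which lets us avoid negative moments of the partition function (not yet available at this stage). Throughout I restrict to $N$ large enough that $2\beta_N<\beta_0$; the finitely many smaller values of $N$ are harmless, since then $Nt=O(1)$ and a compactness argument applies.

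First, differentiating $f_{N,t}(\omega)=\log\Zn(0,Nt)$ gives, for $1\le i\le Nt-1$,
\[
	\partial_{\omega_i}f_{N,t}(\omega)=\beta_N\,q_i(\omega),\qquad
	q_i(\omega):=\frac{\E\big[\ind_{i\in\tau}\,e^{\sum_{k=1}^{Nt-1}(\beta_N\omega_k-\Lambda(\beta_N)+h_N)\ind_{k\in\tau}}\,\big|\,Nt\in\tau\big]}{\Zn(0,Nt)}\in[0,1],
\]
while $\partial_{\omega_i}f_{N,t}(\omega)=0$ for $i\ge Nt$; thus $q_i$ is the probability of $\{i\in\tau\}$ under the conditioned pinning measure and $|\nabla f_{N,t}(\omega)|^{2}=\beta_N^{2}\sum_{i=1}^{Nt-1}q_i(\omega)^{2}$. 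Next, since $\hh\le 0$ we have $h_N\le 0$, hence $\Zn(0,Nt)\le\Zno(0,Nt)$, so $\bbE[\Zn(0,Nt)^{2}]\le\bbE[\Zno(0,Nt)^{2}]\le\lambda$ and $\bbE[\Zn(0,Nt)]\ge\rho>0$ by \eqref{eq:rholambda}; Paley--Zygmund together with $\bbE[\Zn(0,Nt)]\ge\rho$ then gives
\[
	p_{1}:=\inf_{N\in\N}\ \inf_{t\in[0,1]}\bbP\Big(\Zn(0,Nt)\ge\tfrac{\rho}{2}\Big)\ \ge\ \frac{\rho^{2}}{4\lambda}\ >\ 0,
\]
and I take $a:=\log(\rho/2)$, whence $\{f_{N,t}(\omega)\ge a\}=\{\Zn(0,Nt)\ge\rho/2\}$.

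The crux is the estimate $\bbE[|\nabla f_{N,t}(\omega)|^{2}\,\Zn(0,Nt)^{2}]\le C$ uniformly in $N$ and $t\in[0,1]$. Decomposing the renewal at the point $i\in(0,Nt)$ yields the exact identity
\[
	q_i(\omega)\,\Zn(0,Nt)=\frac{u(i)\,u(Nt-i)}{u(Nt)}\,e^{\beta_N\omega_i-\Lambda(\beta_N)+h_N}\,\Zn(0,i)\,\mathrm{Z}^{\omega,\mathtt{c}}_{\beta_N,h_N}(i,Nt),
\]
where the three random factors depend on disjoint blocks of charges, hence are independent. Squaring, summing over $i$ and averaging, using $\bbE[e^{2(\beta_N\omega_i-\Lambda(\beta_N)+h_N)}]=e^{\Lambda(2\beta_N)-2\Lambda(\beta_N)+2h_N}$, the bound $\bbE[\Zn(0,m)^{2}]\le\bbE[\Zno(0,m)^{2}]\le\lambda$ for $m\le N$, and the fact that $\mathrm{Z}^{\omega,\mathtt{c}}_{\beta_N,h_N}(i,Nt)$ has the law of $\Zn(0,Nt-i)$, one gets
\[
	\bbE\big[|\nabla f_{N,t}(\omega)|^{2}\,\Zn(0,Nt)^{2}\big]\ \le\ \lambda^{2}\,e^{\Lambda(2\beta_N)-2\Lambda(\beta_N)+2h_N}\;\beta_N^{2}\sum_{i=1}^{Nt-1}\frac{u(i)^{2}u(Nt-i)^{2}}{u(Nt)^{2}}\,.
\]
The prefactor tends to $\lambda^{2}$. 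For the sum, \eqref{assRP1} makes $u(n)^{2}$ regularly varying of index $-(2-2\alpha)$ with $2-2\alpha\in(0,1)$, so the convolution $\sum_{i=1}^{m-1}u(i)^{2}u(m-i)^{2}$ is dominated by its endpoints and is regularly varying of index $4\alpha-3$; dividing by $u(Nt)^{2}$ and multiplying by $\beta_N^{2}\asymp\hb^{2}L(N)^{2}N^{-(2\alpha-1)}$ leaves, up to slowly varying corrections, something of order $\hb^{2}t^{\,2\alpha-1}$, hence bounded on $[0,1]$ uniformly in $N$ (Potter's bounds \cite[Theorem~1.5.6]{BGT89} make the slowly varying corrections uniform; the regime $Nt=O(1)$ is handled separately via $q_i\le 1$ and $\sup_N\beta_N<\infty$).

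Finally I combine the two inputs: on $\{\Zn(0,Nt)\ge\rho/2\}$ one has $|\nabla f_{N,t}(\omega)|^{2}\le(2/\rho)^{2}|\nabla f_{N,t}(\omega)|^{2}\Zn(0,Nt)^{2}$, so $\bbE[|\nabla f_{N,t}(\omega)|^{2}\ind_{\{\Zn(0,Nt)\ge\rho/2\}}]\le(2/\rho)^{2}C=:C'$, and Markov gives $\bbP(|\nabla f_{N,t}(\omega)|>c,\ \Zn(0,Nt)\ge\rho/2)\le C'/c^{2}$; therefore $\bbP(f_{N,t}(\omega)\ge a,\ |\nabla f_{N,t}(\omega)|\le c)\ge p_{1}-C'/c^{2}$, and choosing $c$ with $C'/c^{2}\le p_{1}/2$ proves the lemma with $\theta:=p_{1}/2$. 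The main obstacle is the uniform bound on $\beta_N^{2}\sum_{i=1}^{Nt-1}u(i)^{2}u(Nt-i)^{2}/u(Nt)^{2}$ in the previous paragraph, valid for $t$ all the way down to $0$: this is exactly where $\alpha>\tfrac12$ is used (it makes the singularity exponent $2-2\alpha$ at $0$ and at $Nt$ less than $1$, so the convolution is endpoint--dominated and scales like $t^{2\alpha-1}$), and it requires controlling the slowly varying functions uniformly together with a hands--on treatment of the regime $Nt=O(1)$; the other point worth highlighting is the restriction to $\{\Zn(0,Nt)\ge\rho/2\}$, which supplies the lower bound on the random normalisation for free.
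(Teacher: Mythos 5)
Your overall strategy matches the paper's: the same Paley--Zygmund step yields $a=\log(\rho/2)$ and $p_1\ge\rho^2/(4\lambda)$; the same trick of restricting to $\{\Zn(0,Nt)\ge\rho/2\}$ supplies a deterministic lower bound on the random normalisation, so that bounding $\bbE[|\nabla f_{N,t}|^2\ind_{\{\Zn\ge\rho/2\}}]$ reduces to bounding $\bbE[|\nabla f_{N,t}|^2\,\Zn(0,Nt)^2]$; and the conclusion by Markov is identical.

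The one place you diverge is the estimate of $\bbE[|\nabla f_{N,t}|^2\,\Zn(0,Nt)^2]$. You factorise $q_i\Zn(0,Nt)$ at the renewal point $i$ (your identity is correct, and the three disorder factors are indeed independent), bound the two sub-block second moments by $\lambda$, and are left with
\[
\beta_N^2\sum_{i=1}^{Nt-1}\frac{u(i)^2\,u(Nt-i)^2}{u(Nt)^2}\,,
\]
which you then estimate by hand through regular-variation/convolution asymptotics. The paper instead keeps the expression as a conditional expectation over the intersection renewal $\tau\cap\tau'$, applies $x\le e^x$ to absorb the factor $\beta_N^2\sum\ind_{i\in\tau\cap\tau'}$ into the exponential, and lands directly on $\Psi^{\mathtt{c}}_{(C+1)\beta_N^2}(Nt)$, whose uniform boundedness is already supplied by Theorem~\ref{ThmUConv}. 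Your route is correct in principle, but the convolution estimate is genuinely delicate uniformly down to $t=0$, and your treatment of it is left sketchy. It can however be closed cleanly without any fresh regular-variation work: your sum is exactly
$\beta_N^2\sum_i w(i)w(Nt-i)/w(Nt)$ with $w(n)=\P(n\in\tau\cap\tau')$, and since $\beta_N^2\le e^{\beta_N^2}-1$ this is dominated by the $k=1$ term of $\Psi^{\mathtt{c}}_{\beta_N^2}(Nt)$, hence by $\Psi^{\mathtt{c}}_{\beta_N^2}(Nt)$ itself (all terms in the series \eqref{eq:RiSu} are non-negative), which is uniformly bounded over $N\in\N$, $t\in[0,1]$ by Theorem~\ref{ThmUConv}. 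Citing that would remove the only soft spot in your write-up; as it stands, the regular-variation sketch is a plausible alternative but not a complete argument, precisely because of the uniformity in $t$ down to $0$ and the slowly-varying corrections that you flag yourself.
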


\begin{proof}
Recall Lemma~\ref{th:boundsren}, in particular the definition \eqref{eq:rholambda}
of $\rho$ and $\lambda$. By the Paley-Zygmund inequality, for all $N\in\N$ and $t\in[0,1]$
we can write
\begin{equation} \label{eq:PZ}
 \mathbb{P}\left(\Zn(0,Nt)\geq \frac{\rho}{2} \right)
 \geq \mathbb{P}\left(\Zn(0,Nt)\geq \frac{\mathbb{E}\left[\Zn(0,Nt)\right]}{2}\right)
 \geq \frac{\left(\mathbb{E}\left[\Zn(0,Nt)\right]\right)^2}
 {4\mathbb{E}\left[\left(\Zn(0,Nt)\right)^2\right]}  \,.
\end{equation}
Replacing $h_N \le 0$ by $0$ in the denominator,
we get the following lower bound, with $a := \log \frac{\rho}{2}$:
\begin{equation}\label{eq:lbu}
	\bbP\left( f_{N,t}(\omega) \ge a\right) =
	\mathbb{P}\left(\Zn(0,Nt)\geq \frac{\rho}{2} \right) \ge
	\frac{\rho^2}{4\lambda} \,, \qquad \forall N \in\N, \ t \in [0,1]\,.
\end{equation}

Next we focus on $\nabla f_{N,t} (\omega)$.
Recalling \eqref{discrcondfor}, we have
\begin{equation*}
	\frac{\partial f_{N,t}}{\partial \omega_i} (\omega)
	= \beta_N \frac{\E[ \ind_{i \in \tau} e^{\sum_{k=1}^{Nt-1}(\beta\omega_k-\Lambda(\beta)+h)
	\ind_{k\in\tau}} | Nt \in \tau]}{\Zn(0,Nt)} \, \ind_{i \le Nt-1} \,,
\end{equation*}
hence, denoting by $\tau'$ an independent copy of $\tau$,
\begin{equation*}
	|\nabla f_{N,t} (\omega)|^2 = \sum_{i=1}^N 
	\left(\frac{\partial f_{N,t}}{\partial \omega_i} (\omega)\right)^2
	= \beta_N^2 \frac{\E[ ( \sum_{i=1}^{Nt-1} \ind_{i \in \tau \cap \tau'} )\,
	e^{\sum_{k=1}^{Nt-1}(\beta_N\omega_k-\Lambda(\beta_N)+h_N)
	(\ind_{k\in\tau} + \ind_{k\in\tau'})} |Nt \in \tau\cap\tau']}{\Zn(0,Nt)^2} \,.
\end{equation*}
Since $h_N \le 0$, we replace $h_N$ by $0$ in the numerator getting an upper bound. 
Recalling that $a = \log \frac{\rho}{2}$,
\begin{equation*}
\begin{split}
	\bbP\left(f_{N,t}(\omega) \ge a, \  |\nabla f_{N,t} (\omega)| > c \right)
	& \le \frac{\bbE[|\nabla f_{N,t} (\omega)|^2 \ind_{\{f_{N,t}(\omega) \ge a\}}]}
	{c^2} = \frac{\bbE[|\nabla f_{N,t} (\omega)|^2 \ind_{\{\Zn(0,Nt) \ge \frac{\rho}{2}\}}]}
	{c^2} \\
	& \le \frac{4}{\rho^2c^2} 
	\E\left[ \left. \left(\beta_N^2 \sum_{i=1}^{Nt-1} \ind_{i \in \tau \cap \tau'} \right)\,
	e^{(\Lambda(2\beta_N)-2\Lambda(\beta_N)) 
	\sum_{k=1}^{Nt-1}\ind_{k\in\tau \cap \tau'}} \right| Nt \in \tau \cap \tau' \right] \,.
\end{split}
\end{equation*}
We recall that $\Lambda(2\beta_N)-2\Lambda(\beta_N)\sim \beta_N^2$, by \eqref{assD1},
hence $\Lambda(2\beta_N)-2\Lambda(\beta_N) \le C \beta_N^2$ for some $C \in (0,\infty)$.
Since $x \le e^x$ for all $x \ge 0$, we obtain
\begin{equation*}
	\bbP\left(f_{N,t}(\omega) \ge a, \  |\nabla f_{N,t} (\omega)| > c \right) \le
	\frac{4}{\rho^2c^2} 
	\E\left[ \left.
	e^{(C+1) \beta_N^2
	\sum_{k=1}^{Nt-1}\ind_{k\in\tau \cap \tau'}} \right| Nt \in \tau \cap \tau' \right]
	= \frac{4}{\rho^2c^2} \Psi_{(C+1) \beta_N^2}^{\mathtt{c}}(Nt) \,,
\end{equation*}
where we used the definition \eqref{eq:Psi}, with $\sigma = \tau \cap \tau'$,
which we recall that
satisfies \eqref{eqsigmarenewal} with $\nu =2\alpha -1$ and $M(n) = L(n)^2 / C_\alpha^2$.
In particular, as we discussed in the proof of Lemma~\ref{th:boundsren},
$\beta_N^2 \sim \delta_N$ in \eqref{eq:deltaN} with $\hd = C_\alpha^2 \hb^2$,
hence $\Psi_{(C+1) \beta_N^2}^{\mathtt{c}}(Nt)$ is uniformly bounded, by Theorem~\ref{ThmUConv}:
\begin{equation} \label{eq:xi}
	\xi := \sup_{N\in\N} \sup_{t\in [0,1]} \Psi_{(C+1) \beta_N^2}^{\mathtt{c}}(Nt) < \infty \,.
\end{equation}

In conclusion, with $\rho, \lambda, \xi$
defined in \eqref{eq:rholambda}-\eqref{eq:xi},
setting $a := \log \frac{\rho}{2}$ one has, for every $c > 0$,
\begin{equation*}
\begin{split}
	\bbP(f_{N,t}(\omega) \ge a, \ 
	|\nabla f_{N,t}(\omega)| \le c) & = \bbP(f_{N,t}(\omega) \ge a)
	- \bbP(f_{N,t}(\omega) \ge a, \ 
	|\nabla f_{N,t}(\omega)| > c) \\
	& \ge \frac{\rho^2}{4\lambda} - \frac{4 \xi}{\rho^2 c^2} =: \theta \,,
	\qquad \forall N\in\N, \ t \in [0,1] \,.
\end{split}
\end{equation*}
Choosing $c > 0$ large enough one has $\theta > 0$, and the proof is completed.
\end{proof}

\subsection{Proof of \eqref{AB3}, case $p \ge 0$.}
We recall Garsia's inequality \cite{G72} with $\Psi(x)=|x|^p$ and $\phi(u)=u^q$: 
for all $p\geq 1$, $\mu>0$ with $p\mu>4$ we have
for every $0\le s_i\le t_i\le 1$, $i=1,2$,
 \begin{align}\label{eqControlG}
  \left|\Zn(Ns_1,Nt_1)-\Zn(Ns_2,Nt_2)\right|&\leq \frac{8\mu}{\mu-4/p}
 \, B_N \,
  |(s_1,t_1)-(s_2,t_2) |^{\mu-4/p}
 \end{align}
where $|\cdot|$ denotes the Euclidean norm
and $B_N$ is an explicit (random) constant depending of $p$:
 \begin{equation}\label{eqBgarzia}
  B_N^p= 2^{\mu/2} \displaystyle\int_{[0,1]_{\leq}^2 \times [0,1]_{\leq}^2}  
  \frac{\left|\Zn(Ns_1,Nt_1)-\Zn(Ns_2,Nt_2)\right|^p}{| (s_1,t_1)-(s_2,t_2) |^{p\mu}} 
  \dd s_1 \dd t_1 \dd s_2 \dd t_2.
 \end{equation} 
Since $\Zn(0,0) = 1$ and $|a+b|^p \le 2^p (|a|^p + |b|^p)$, it follows that
\begin{equation*}
	 \bbE \left[ \sup_{0 \le s \le t \le T}
	\Zn(Ns,Nt) ^p\right]\leq 2^p\left( 1 + 
	\left(\frac{8\mu }{\mu-4/p}\right)^p\,(\sqrt{2} T )^{p\mu-4}
	\bbE\left[ B_N^p \right]
	\right) \,.
\end{equation*}
We are thus reduced to estimating $\bbE[ B_N^p ]$.

It was shown in \cite[Section 2.2]{CRZ14} that for any $p\geq 1$ 
there exist $C_p>0$ and $\eta_p>2$ for which 
\begin{equation}\label{Eq1Garsia}
 \sup_{N\in\N} \mathbb{E}\left(\left|\Zn(Ns_1,Nt_1)-\Zn(Ns_2,Nt_2)\right|^p\right)\leq 
 C_p |(t_1,s_1)-(t_s,s_2) |^{\eta_p}.
\end{equation}
The value of $\eta_p$ is actually explicit,
cf.\ \cite[eq. (2.25), (2.34), last equation in \S2.2]{CRZ14},
and such that
\begin{equation*}
	\lim_{p\to\infty} \frac{\eta_p}{p} = \bar \mu > 0 \,, \qquad\text{where}
	\qquad \bar\mu = \frac{1}{2} \min\left\{\alpha'-\tfrac{1}{2}, \delta\right\} \,,
\end{equation*}
where $\delta > 0$ is the exponent in \eqref{assRP2}
and $\alpha'$ is any fixed number in $(\frac{1}{2},\alpha)$.
If we choose any $\mu \in (0,\bar\mu)$,
plugging \eqref{Eq1Garsia} into \eqref{eqBgarzia} we see that
the integral is finite for large $p$, completing the proof.\qed

\subsection{Proof of \eqref{AB3}, case $p \le 0$.}

We prove that an analogue of \eqref{Eq1Garsia} holds. Once proved this, the proof runs 
as for the case $p\ge 0$, using Garsia's inequality \eqref{eqControlG}
for $1/\Zn(Ns,Nt)$.

We first claim that for every $p > 0$ there exists $D_p<\infty$ such that
\begin{equation}\label{eqNB1}
\mathbb{E}\left( \Zn(Ns,Nt)^{-p}\right)\leq D_p \,, \qquad
\forall N\in\N, \ 0 \le s \le t \le 1 \,.
\end{equation}  
This follows by \eqref{AB3bis}:
\begin{equation*}
\begin{split}
	\mathbb{E}\left( \Zn(0,Nt) ^{-p}\right)
	& = \int_{0}^\infty \bbP\left( \Zn(0,Nt) ^{-p} > y \right) \dd y 
	= \int_{0}^\infty \bbP\left( \log \Zn(0,Nt) < -p \log y \right) \dd y \\
	& \le  1+A \int_{1}^\infty \exp \left( - \frac{p^\gamma (\log y)^\gamma}{B} \right)
	\, \dd y =1+ A \int_{0}^\infty \exp \left( - \frac{p^\gamma x^\gamma}{B} \right)
	\, e^x \, \dd x < \infty \,
\end{split}
\end{equation*}
 where in the last step we used $\gamma>1$.
Then, by \eqref{eqNB1}, applying the
Cauchy-Schwarz inequality twice gives
\begin{align*}
 &\mathbb{E}\left[\left|\frac{1}{\Zn(Ns_1,Nt_1)}-\frac{1}{\Zn(Ns_2,Nt_2)}\right|^p\right]
 = \mathbb{E}\left[\left|\frac{\Zn(Ns_1,Nt_1) - \Zn(Ns_2,Nt_2)}
 {\Zn(Ns_1,Nt_1) \, \Zn(Ns_2,Nt_2)}
 \right|^p\right]  \\
  & \le \sqrt{D_{4p}} \,
 \mathbb{E}\left(\left|{\Zn(Ns_1,Nt_1)}-\Zn(Ns_2,Nt_2)\right|^{2p}\right)^{\frac{1}{2}}\overset{\eqref{Eq1Garsia}}{\leq} 
 \sqrt{D_{4p} \, C_p} \, |(t_1,s_1)-(t_s,s_2)|^{\eta_{2p}/2} \,,
\end{align*}
completing the proof.\qed

\section{Proof of Theorem \ref{PropByProd}}\label{ContPinnModel}

Throughout this section we fix $\hb > 0$. 
We recall that the discrete partition functions
$\mathrm{Z}_{\beta,h}^\omega(Nt)$,
$\mathrm{Z}_{\beta,h}^{\omega,\mathtt{c}}(Ns, Nt)$
are linearly interpolated for $Ns, Nt \not\in \N_0$.
We split the proof in three steps.

\medskip

\noindent
\textit{Step 1. The coupling.}
For notational clarity, we denote with the letters $\mathrm{Y}, \mathbf{Y}$
the discrete and continuum partition functions $\mathrm{Z}, \mathbf{Z}$ 
in which we set $h, \hh = 0$:
\begin{equation} \label{eq:YY}
\begin{split}
	\mathrm{Y}_{\beta}^\omega(N) := \mathrm{Z}_{\beta,0}^\omega(N) \,, \qquad
	& \Yf(t) := \Zfo(t) \,, \\
	\mathrm{Y}_{\beta}^{\omega,\mathtt{c}}(a,b) 
	:= \mathrm{Z}_{\beta,0}^{\omega,\mathtt{c}}(a,b) \,, \qquad
	& \Y(s,t) := \Zo(s,t) \,.
\end{split}
\end{equation}
We know by \cite[Theorem 2.1 and Remark 2.3]{CRZ14} that 
for fixed $\hh$ (in particular, for $\hh = 0$) 
the convergence in distribution
\eqref{eq:convd}, resp.\ \eqref{eq:convd2}, holds in the space of continuous
functions of $t \in [0,\infty)$, resp.\ $(s,t) \in [0,\infty)^2_\le$, 
with uniform convergence on compact sets.
 By Skorohod's representation theorem (see Remark~\ref{rem:Sk} below),
we can fix a continuous version of the processes $\mathbf{Y}$ and a
coupling of $\mathrm{Y}, \mathbf{Y}$ such that $\bbP(\dd\omega, \dd W)$-a.s.
\begin{equation}\label{eq:Sk}
	\forall T > 0: \quad
	\sup_{0 \le t \le T} \Big| \mathrm{Y}_{\beta_N}^\omega(Nt)
	- \Yf(t) \Big| \xrightarrow[N\to\infty]{}0 \,, \quad \
	\sup_{0 \le s \le t \le T} \Big| \mathrm{Y}_{\beta_N}^{\omega,\mathtt{c}}(Ns,Nt)
	- \Y(s,t) \Big| \xrightarrow[N\to\infty]{}0 \,.
\end{equation}
We stress that the coupling depends only on the fixed value of $\hb > 0$.

The rest of this section consists in showing that under this coupling
of $\mathrm{Y}, \mathbf{Y}$,
the partition functions converge locally uniformly also in
the variable $\hh$. More precisely, we show that there is a version of the processes
$\Zf(t)$ and $\Z(s,t)$ such that $\bbP(\dd\omega, \dd W)$-a.s.
\begin{equation}\label{eq:Sk2}
\begin{split}
	\forall T, M \in (0,\infty): \quad
	& \sup_{0 \le t \le T, \ |\hh|\le M} \Big| 
	\mathrm{Z}_{\beta_N, h_N}^\omega(Nt)
	- \Zf(t) \Big| \xrightarrow[N\to\infty]{}0 \,, \\
	& \sup_{0 \le s \le t \le T, \ |\hh|\le M} 
	\Big| \mathrm{Z}_{\beta_N, h_N}^{\omega,\mathtt{c}}(Ns,Nt)
	- \Z(s,t) \Big| \xrightarrow[N\to\infty]{}0 \,.
\end{split}
\end{equation}

\begin{remark}\label{rem:Sk}
A slightly strengthened version of the usual Skorokhod representation theorem \cite[Corollaries
5.11--5.12]{K97}
ensures that one can indeed couple not only the processes $\mathrm{Y}, \mathbf{Y}$,
but even the environments $\omega, W$ of which they are functions,
so that \eqref{eq:Sk} holds.
More precisely, one can define on the same probability space
a Brownian motion $W$ and a family $(\omega^{(N)})_{N\in\N}$,
where $\omega^{(N)} = (\omega^{(N)}_i)_{i\in\N}$ is for each $N$
an i.i.d.\ sequence with the original disorder distribution, 
such that \emph{plugging $\omega = \omega^{(N)}$ into
$\mathrm{Y}_{\beta_N}^\omega(\cdot)$}, relation \eqref{eq:Sk} holds a.s..
(Of course, the sequences $\omega^{(N)}$ and $\omega^{(N')}$ will not be independent
for $N \ne N'$.)
We write $\bbP(\dd\omega, \dd W)$ for the joint
probability with respect to $(\omega^{(N)})_{N\in\N}$ and $W$. For notational simplicity,
we will omit the superscript $N$ from $\omega^{(N)}$ in $\mathrm{Y}_{\beta_N}^\omega(\cdot)$,
$\mathrm{Z}_{\beta_N, h_N}^\omega(\cdot)$, etc..
\end{remark}

\medskip
\noindent
\textit{Step 2. Regular versions.}
The strategy to deduce \eqref{eq:Sk2} from \eqref{eq:Sk} is to express the partition
functions $\mathrm{Z}, \mathbf{Z}$ for $\hh \ne 0$ in terms of the $\hh = 0$ case,
i.e.\ of $\mathrm{Y}, \mathbf{Y}$. We start doing this in the continuum.

We recall the Wiener chaos expansions of the continuum partition functions,
obtained in \cite[Theorem 3.1]{CRZ13},
where as in \eqref{assRP1} we define the constant $C_\alpha := \frac{\alpha\sin(\alpha\pi)}{\pi}$:
\begin{equation} \label{eq:Wiener}
	\Zf(t) = 1 + \sum_{n=1}^\infty \
	\idotsint\limits_{0 < t_1 < t_2 < \ldots < t_n < t}   \,
	\frac{C_\alpha^n}
	{ t_1^{1-\alpha} (t_2-t_1)^{1-\alpha}\cdots (t_n-t_{n-1})^{1-\alpha}}
	\prod_{i=1}^n \big( \hb \,\dd W_{t_i} + \hh\, \dd t_i \big) \,.
\end{equation}
\begin{equation} \label{eq:Wiener2}
\begin{split}
	& \Z(s,t) = 1 + \\
	& \quad \ \sum_{n=1}^\infty \
	\idotsint\limits_{s < t_1 < t_2 < \ldots < t_n < t}   \,
	\frac{C_\alpha^n \, (t-s)^{1-\alpha}}
	{ (t_1-s)^{1-\alpha} (t_2-t_{1})^{1-\alpha} \cdots (t_n-t_{n-1})^{1-\alpha}
	(t-t_n)^{1-\alpha}}
	\prod_{i=1}^n \big( \hb \,\dd W_{t_i} + \hh\, \dd t_i \big) \,.
\end{split}
\end{equation}
These equalities should be understood in the a.s.\ sense, since stochastic
integrals are not defined pathwise. 
In the next result, of independent interest, we exhibit versions of the
continuum partition functions which are jointly continuous in $(t,\hh)$ and $(s,t,\hh)$.
As a matter of fact, we do not need this result in the sequel, so we only sketch its proof.

\begin{theorem}\label{Theorem676result}
Fix $\hb > 0$ and let $(\Yf(t))_{t\in [0,\infty)}$, $(\Y(s,t))_{(s,t) \in [0,\infty)^2_\le}$ 
be versions of \eqref{eq:YY} that are continuous
in $t$, resp.\ in $(s,t)$.
Then, for all $\hh \in \R$ and all $s \in [0,\infty)$, resp. $(s,t) \in [0,\infty)^2_\le$,
\begin{equation}\label{eq:version}
	\Zf(t) \overset{(a.s.)}{=} \Yf(t)
	+ \sum_{k=1}^\infty 
	C_\alpha^k \, \hh^k 
	\left( \,\, \idotsint\limits_{0 < t_1 < t_2 < \ldots < t_k < t}   \!\!
	\frac{\Y(0,t_1)}{t_1^{1-\alpha}} \, 
	\frac{\Y(t_1,t_2)}{(t_2-t_1)^{1-\alpha}} \cdots 
	\frac{\Y(t_{k-1},t_k)}{(t_k-t_{k-1})^{1-\alpha}} \,
	\prod_{i=1}^k  \dd t_i \right) \,,
\end{equation}
\begin{equation}\label{eq:version2}
\begin{split}
	& \Z(s,t) \overset{(a.s.)}{=} \Y(s,t) \, + \, (t-s)^{1-\alpha} \times \\
	& \quad \ \times
	\sum_{k=1}^\infty 
	C_\alpha^k \, \hh^k 
	\left( \,\, \idotsint\limits_{s < t_1 < t_2 < \ldots < t_k < t}   \!\!
	\frac{\Y(s,t_1)}{(t_1-s)^{1-\alpha}} \, 
	\frac{\Y(t_1,t_2)}{(t_2-t_1)^{1-\alpha}} \cdots 
	\frac{\Y(t_{k-1},t_k)}{(t_k-t_{k-1})^{1-\alpha}} \,
	\frac{\Y(t_{k},t)}{(t-t_{k})^{1-\alpha}} \,
	\prod_{i=1}^k  \dd t_i \right) \,.
\end{split}
\end{equation}
The right hand sides 
of \eqref{eq:version}, \eqref{eq:version2} are versions
of the continuum partition functions \eqref{eq:Wiener}, \eqref{eq:Wiener2} that are jointly continuous 
in $(t,\hh)$, resp.\ in $(s,t,\hh)$.
\end{theorem}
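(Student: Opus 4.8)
\textbf{Proof plan for Theorem~\ref{Theorem676result}.}
The plan is to start from the Wiener chaos expansion \eqref{eq:Wiener} (the constrained case \eqref{eq:Wiener2} being entirely analogous) and to regroup the stochastic integrals according to the positions of the ``deterministic'' time variables, i.e.\ those $t_i$ for which the differential $\hh\,\dd t_i$ is chosen rather than $\hb\,\dd W_{t_i}$. First I would expand each product $\prod_{i=1}^n(\hb\,\dd W_{t_i}+\hh\,\dd t_i)$ into $2^n$ terms and, for a fixed choice of which variables are of $\dd t$-type, relabel the latter as $s_1<\dots<s_k$ and the remaining $W$-type variables as a sub-chaos living in the gaps $(s_{j-1},s_j)$ determined by consecutive $\dd t$-points (with $s_0:=0$ and the last gap $(s_k,t)$). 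The crucial combinatorial observation is that, summing over all $n\ge k$ and over all ways of interleaving the $W$-variables in these $k+1$ gaps, the Wiener integral over each gap $(s_{j-1},s_j)$ reassembles \emph{exactly} the chaos expansion \eqref{eq:Wiener2} of the $\hh=0$ constrained partition function $\Y(s_{j-1},s_j)$, up to the explicit kernel prefactors $C_\alpha/(s_j-s_{j-1})^{1-\alpha}$; this is just Fubini/linearity at the level of formal series. This yields the identity \eqref{eq:version} at the level of $L^2$ series, which I would justify by the $L^2$ orthogonality of distinct chaoses together with the moment bounds of Corollary~\ref{p1resultscont} (in particular \eqref{AB3cont}) to control the $k$-fold integrals of products of $\Y$'s.

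Having established \eqref{eq:version} in the a.s./$L^2$ sense for each fixed $\hh$, the second half is to show that the right-hand side, \emph{built from fixed continuous versions of} $\Y$, is jointly continuous in $(t,\hh)$ and hence provides the desired continuous version. Here I would argue termwise: for each $k$, the map
\[
  (t,\hh)\longmapsto C_\alpha^k\hh^k\!\!\idotsint\limits_{0<t_1<\dots<t_k<t}\!\!\frac{\Y(0,t_1)}{t_1^{1-\alpha}}\cdots\frac{\Y(t_{k-1},t_k)}{(t_k-t_{k-1})^{1-\alpha}}\,\prod_i\dd t_i
\]
is continuous, since the integrand is a.s.\ locally bounded (using $\sup_{0\le s\le t\le T}\Y(s,t)<\infty$ a.s., from \eqref{AB3cont}, and the fact that the singular kernels $r^{-(1-\alpha)}$ are integrable because $1-\alpha<1$), so dominated convergence applies as $(t,\hh)$ varies. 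To pass to the infinite sum I would produce a uniform-in-$(t,\hh)$, summable-in-$k$ a.s.\ bound on the $k$-th term: bounding each $\Y(t_{i-1},t_i)$ by its sup $\Xi_T:=\sup_{0\le s\le t\le T}\Y(s,t)$ reduces the $k$-fold integral to the deterministic quantity appearing in \eqref{RiemConv}, namely $C_\alpha^k|\hh|^k\Xi_T^{k+1}$ times $\idotsint_{0<t_1<\dots<t_k<t}\prod(t_i-t_{i-1})^{-(1-\alpha)}\dd t_i$, and the latter integral is $O(\hat C_T^k/k!^{\,\alpha})$ or at least decays like $\hat C_T^k e^{-c k\log k}$ by the estimate \eqref{eq:ineq} already used in the proof of Theorem~\ref{ThmUConv}. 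This gives a random but a.s.\ finite dominating series, uniformly for $(t,\hh)$ in compacts, whence the sum is jointly continuous and finite a.s.

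The main obstacle I anticipate is the rigorous bookkeeping in the first step: stochastic integrals are only defined in the a.s./$L^2$ sense, not pathwise, so the ``expand $2^n$ terms and regroup'' manipulation must be carried out at the level of $L^2$ random variables, checking that the rearranged multiple series converges in $L^2$ and that the regrouping is legitimate (no issue with conditional convergence). Concretely, one must verify that $\sum_k \mathbb{E}[(k\text{-th term})^2]^{1/2}<\infty$ — which again follows from \eqref{AB3cont} applied to moments of $\Y$ of all orders together with the factorial-type decay of the deterministic kernel integral \eqref{eq:ineq} — and that the chaos components match on both sides, which is automatic once the formal rearrangement is seen to respect the Wick/It\^o structure (each $W$-type variable contributes exactly one $\dd W$). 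Since the paper explicitly says it only sketches this proof, I would present the regrouping heuristically and point to the $L^2$/chaos-orthogonality and moment bounds as the ingredients making it rigorous, rather than writing out every index.
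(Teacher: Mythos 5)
Your proposal matches the paper's own sketch essentially verbatim: expand the product of differentials $\prod(\hb\,\dd W_{t_i}+\hh\,\dd t_i)$ binomially, regroup by the positions of the $\dd t$-type variables, recognize that the stochastic sub-chaos in each gap reassembles the $\hh=0$ partition function $\Y$ (the paper writes this as $A(a,b)=\Y(a,b)/(b-a)^{1-\alpha}$), and then obtain summability in $k$ and joint continuity from the a.s.\ local boundedness of the fixed continuous version of $\Y$ together with the deterministic kernel estimate \eqref{eq:ineq} (Lemma C.1 of \cite{CRZ14}). The only difference is that you make explicit the $L^2$/chaos-orthogonality bookkeeping needed to legitimize the rearrangement of the multiple series — a point the paper's sketch leaves implicit — which is a correct refinement rather than a different route.
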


\begin{remark}\rm
The equalities \eqref{eq:version} and \eqref{eq:version2} hold on a set of probability $1$ which depends
on $\hh$. On the other hand, the right hand sides of these relations are continuous functions of $\hh$, 
for $W$ in a fixed set of probability $1$.
\end{remark}

\begin{proof}[Proof (sketch).]
We focus on \eqref{eq:version2}, since \eqref{eq:version} is analogous.
We rewrite the $n$-fold integral in \eqref{eq:Wiener2} expanding
the product of differentials in a binomial fashion,
obtaining $2^n$ terms. Each term contains $k$ ``deterministic variables'' $\dd t_i$
and $n-k$ ``stochastic variables'' $\dd W_{t_j}$, whose locations are intertwined.
If we relabel the deterministic variables as $u_1 < \ldots < u_k$,
performing the sum over $n$ in \eqref{eq:Wiener2} yields
\begin{equation*} 
	\Z(s,t) = 1 + (t-s)^{1-\alpha}\sum_{k=1}^\infty C_\alpha^k 
	\idotsint\limits_{s < u_1 < u_2 < \ldots < u_k < t}   \,
	A(s,u_1) A(u_1, u_2) \cdots A(u_{k-1}, u_k) A(u_k, t)
	\prod_{i=1}^k \hh \, \dd u_i \,,
\end{equation*}
where $A(u_m, u_{m+1})$ gathers the contribution of the integrals over the stochastic variables
$\dd W_{t_j}$ with indexes $t_j \in (u_m, u_{m+1})$, i.e. (relabeling such variables
as $t_1, \ldots, t_n$)
\begin{equation*}
\begin{split}
	A(a, b) = & \frac{1}{(b-a)^{1-\alpha}} + \\
	& + \sum_{n=1}^\infty \
	\idotsint\limits_{a < t_1 < t_2 < \ldots < t_n < b}   \,
	\frac{C_\alpha^n}
	{ (t_1-a)^{1-\alpha} (t_2-t_1)^{1-\alpha} \cdots (t_n-t_{n-1})^{1-\alpha}
	(b-t_n)^{1-\alpha}}
	\prod_{j=1}^n \hb \,\dd W_{t_j} \,.
\end{split}
\end{equation*}
A look at \eqref{eq:Wiener2} shows that $A(a, b) = \frac{1}{(b-a)^{1-\alpha}}
\Zo(s,t) = \frac{1}{(b-a)^{1-\alpha}} \Y(s,t)$, proving \eqref{eq:version2}.


Since the process $\Y(s,t)$ is continuous by assumption,
it is locally bounded and consequently the series in \eqref{eq:version2} converges by
the upper bound in \cite[Lemma C.1]{CRZ14} (that we already used in \eqref{eq:ineq}).
The continuity 
of the right hand side of \eqref{eq:version2}
in $(s,t,\hh)$ is then easily checked.
\end{proof}

\medskip
\noindent
\textit{Step 3. Proof of \eqref{eq:Sk2}.}
We now prove \eqref{eq:Sk2}, focusing on the second relation, since the first
one is analogous. We are going to prove it with $\Z(s,t)$ \emph{defined}
as the right hand side of \eqref{eq:version2}.

Since $e^{h\ind_{n\in\tau}} = 1 + (e^h-1)\ind_{n\in\tau}$,
a binomial expansion yields
\begin{equation} \label{eq:plug}
	e^{h\sum_{n=q+1}^{r-1}\ind_{n\in\tau}} =
	\prod_{n=q+1}^{r-1} e^{h\ind_{n\in\tau}} =
	1 + \sum_{k=1}^{r-q-1}\displaystyle\sum_{\small{q+1\leq n_1<\cdots<n_k\leq r-1}} (e^h-1)^k\, 	
	\ind_{n_1\in\tau}\cdots \ind_{n_k\in\tau} \,.
\end{equation}
We now want to plug \eqref{eq:plug} into \eqref{discrcondfor}.
Setting $n_0 := r$, we can write (in analogy with \eqref{eq:facto})
\begin{equation*}
\begin{split}
	&\mathrm{E}\left(\left.e^{\sum_{k=q+1}^{r-1}(\beta\omega_k-\Lambda(\beta))\ind_{k\in\tau}} 
	\ind_{n_1\in\tau}\cdots \ind_{n_k\in\tau} 
	\, \right| q\in\tau, r\in\tau\right) \\
	=&
	\left(\prod_{i=1}^k e^{\beta \omega_{n_i}-\Lambda(\beta)}
	\mathrm{Y}_\beta^{\omega,\mathtt{c}}(n_{i-1},n_i)\right)
	\frac{\mathrm{Y}_\beta^{\omega,\mathtt{c}}(n_{k},r)}{
	\mathrm{Y}_\beta^{\omega,\mathtt{c}}(q,r)}\,
	\left(\prod_{i=1}^k u(n_i-n_{i-1})\right)
	\frac{u(r-n_k)}{u(r-q)}\,,
\end{split}
\end{equation*}
where we recall that $\mathrm{Y}_{\beta}^{\omega,\mathtt{c}} :=\mathrm{Z}_{\beta,0}^{\omega,\mathtt{c}}$, cf.\eqref{eq:YY}. For brevity we set
\begin{equation}\label{eq:Qdisc}
	Q_\beta^\omega(a,b) := e^{\beta \omega_a - \Lambda(\beta)}
	\, \mathrm{Y}_{\beta}^{\omega,\mathtt{c}}(a,b) \,.
\end{equation}
Then, plugging \eqref{eq:plug} into \eqref{discrcondfor}, we obtain
a discrete version of \eqref{eq:version2}:
\begin{equation} \label{eq:multiple}
\begin{split}
	& \mathrm{Z}_{\beta,h}^{\omega,\mathtt{c}}(q,r)=
	\mathrm{Y}_{\beta}^{\omega,\mathtt{c}}(q,r) \\
	& \quad + \sum_{k=1}^{r-q-1}(e^h-1)^k \!\!\!\!
	\sum_{\small{q+1\leq n_1<\cdots<n_k\leq r-1}} 
	\left( \prod_{i=1}^{k}Q(n_{i-1},n_i) \right) \frac{Q_\beta^\omega(n_k, r)}
	 {Q_{\beta}^{\omega,\mathtt{c}}(q,r)} \left(\prod_{i=1}^k u(n_i-n_{i-1})\right)
	\frac{u(r-n_k)}{u(r-q)}\,.
\end{split}
\end{equation}

\smallskip

We are now ready to prove \eqref{eq:Sk2}. For this purpose we are
going to use an analogous argument as in 
Theorem~\ref{ThmUConv}: it will be necessary and sufficient to prove that, $\bbP(\dd\omega,\dd W)$-a.s.,
for any convergent 
sequence $(s_N,t_N,\hh_N)_{N\in\mathbb{N}} \to (s_{\infty},t_{\infty},\hh_{\infty})$ 
in $[0,T]_{\leq}^2\times [0,M]$ one has
\begin{equation}\label{eq:toprr}
	\lim_{N\to\infty}\left|\mathrm{Z}_{\beta_N,\tilde{h}_N}^{\omega,\mathtt{c}}(Ns_N,Nt_N)
	-\mathbf{Z}_{\hb,\hh_N}^{W,\mathtt{c}}(s_N,t_N)\right|=0
\end{equation}
where $h_N=\hh_N\, L(N)N^{-\alpha}$. 
Recall that we have fixed a coupling under which $\mathrm{Y}_{\beta_N}^{\omega,\mathtt{c}}(Ns,Nt)$ 
converges uniformly to $\Y(s,t)$, $\mathbb{P}$-a.s. (cf.\ \eqref{eq:Sk}).
Borel-Cantelli estimates ensure that $\max_{a \le N} |\omega_a| = O(\log N)$
$\mathbb{P}$-a.s., by \eqref{assD1}, hence $Q_{\beta_N}^\omega(Ns,Nt)$ also
converges uniformly to $\Y(s,t)$, $\mathbb{P}$-a.s.. We call this event of probability one 
$\Omega_{\mathrm{Y}}$ and in the rest of the proof we work on that event,
proving \eqref{eq:toprr}.

It is not restrictive to assume $Ns_N\, Nt_N\in \mathbb{N}_0$.
Then
we rewrite \eqref{eq:multiple} with $q=Ns_N,\, r=Nt_N$ as a Riemann sum:
setting $t_0=s_N,\, t_{k+1}=t_N$,
\begin{equation} \label{eq:multiple1}
\begin{split}
	& \mathrm{Z}_{\beta_N,h_N}^{\omega,\mathtt{c}}(Ns_N,Nt_N)=
	\mathrm{Y}_{\beta_N}^{\omega,\mathtt{c}}(Ns_N,Nt_N) \\
	& \ + \sum_{k=1}^{N(t_N-s_N)-1} \left(\frac{e^{h_N}-1}{h_N}\right)^{k} \,
	\left\{ \frac{1}{N^k} \sumtwo{t_1, \ldots, t_k \in \frac{1}{N}\N_0}{s_N<t_1<\cdots<t_k<t_N} 
	\frac{\prod_{i=1}^{k+1}
	\left\{Q_{\beta_N}^\omega(Nt_{i-1},Nt_i) \, (N \, h_N)
	\, u(Nt_i-Nt_{i-1}) \right\}}
	{Q_{\beta_N}^{\omega,\mathtt{c}}(Ns,Nt) \, (N \, h_N) \, u(Nt_N-Ns_N)} \right\} .
\end{split}
\end{equation}
Observe that $N \, h_N = \hh_N\, L(N) N^{1-\alpha} \sim \hh_\infty \, L(N) N^{1-\alpha}$.
Recalling \eqref{assRP1},
on the event $\Omega_{\textrm{Y}}$ we have
\begin{equation}\label{eq:Qdisc1}
	\lim_{N\to\infty} Q_{\beta_N}^\omega(Nx, Ny) \, 
	(Nh_N)\, u(\lceil Ny\rceil - \lceil Nx\rceil)
	= \hh_{\infty}C_{\alpha}\frac{\Y(x,y)}{(y-x)^{1-\alpha}}\,\quad \forall\, 0\leq x<y<\infty,
\end{equation}
and for any $\epsilon>0$ the convergence is uniform on $y-x\geq \epsilon$. 
Then, for fixed $k\in\mathbb{N}$, the term in brackets in
\eqref{eq:multiple1} converges to the corresponding integral in \eqref{eq:version2}, by
Riemann sum approximation, because the contribution to the sum given by $t_i-t_{i-1}<\epsilon$ 
vanishes as $\epsilon\to 0$.
This claim follows by using Potter's bounds as in  \eqref{eq:bounddo}, with 
$W_N(r,s)=L(N)N^{1-\alpha}u(\lceil Nr \rceil-\lceil Ns \rceil)$, and the uniform 
convergence of $Q_{\beta_N}^\omega(Ns, Nt)$ which provides for any $\eta>0$ a random 
constant $C_{\eta,T}\in (0,\infty)$ such that for all $N\in\N$ and for all $0\leq x < y\leq T$
\begin{equation}
\frac{C_{\eta, T}^{-1}}{(y-x)^{1-\alpha-\eta}} \le
Q_{\beta_N}^\omega(Nx, Ny) \, (N\, h_N)\, u(\lceil Ny\rceil - \lceil Nx\rceil) 
\leq \, \frac{C_{\eta, T}}{(y-x)^{1-\alpha+\eta}} .
\end{equation}
Therefore the contribution of the terms $t_i-t_{i-1}<\epsilon$ in the brackets of \eqref{eq:multiple1}   is estimated by
\begin{equation*}
	\idotsint\limits_{\small{s_N<t_1<\cdots<t_k<t_N}}
	\frac{C_{\eta, T}^{k+2} (s_N-t_N)^{1-\alpha-\eta}}{(t_1-s_N)^{1-\alpha-\eta}(t_2-t_1)^{1-\alpha+\eta}\cdots 
	(t_N-t_k)^{1-\alpha+\eta}}\ind_{\{t_i - t_{i-1} \le \epsilon,
        \, \text{for some} \, i =1,\cdots, k\}}
\prod_{i=1}^k \dd {t_i}.
\end{equation*}
For any fixed $k\in\mathbb{N}$ once chosen $\eta\in (0,\alpha)$ this integral vanishes as 
$\epsilon \to 0$ (recall that $(s_N,t_N) \to (s_\infty, t_\infty) \in [0,T]^2_\le$).
To get the convergence of the whole sum \eqref{eq:multiple1} we show that the 
contribution of the terms $k\geq M$ in \eqref{eq:multiple1} can be made arbitrarily small uniformly 
in $N$, by taking $M$ large enough. This follows by the same bound as in \eqref{eq:ineq},
as the term in brackets in \eqref{eq:multiple1} is bounded by
\begin{equation*}
\begin{split}
	&\idotsint\limits_{\small{s_N<t_1<\cdots<t_k<t_N}}
	\frac{C_{\eta, T}^{k+2} (s_N-t_N)^{1-\alpha-\eta}}
	{(t_1-s_N)^{1-\alpha+\eta}(t_2-t_1)^{1-\alpha+\eta}\cdots (t_N-t_k)^{1-\alpha+\eta}} \dd t_1
	\cdots \dd t_k\\
	=&\idotsint\limits_{\small{0<u_1<\cdots<u_k<1}}\frac{C_{\eta,T}^{k+2} 
	(t_N-s_N)^{k(\alpha-\eta)-2\eta}}{u_1^{1-\alpha+\eta}(u_2-u_1)^{1-\alpha+\eta}\cdots 
	(1-u_k)^{1-\alpha+\eta}} \dd u_1\cdots \dd u_k\, \leq ( \hat{C}_{\eta,T})^kc_1 e^{-c_2 k\log k} ,
	\end{split}
\end{equation*}
for some constant $\hat{C}_{\eta,T}\in (0,\infty)$,
cf.\ \cite[Lemma B.3]{CRZ14}. This completes the proof.\qed

\section{Proof of Theorem~\ref{Thm2}}\label{secproofthm2}

In this section we prove Theorem \ref{Thm2}.
Most of our efforts are devoted to proving the key relation \eqref{eq:key},
through a fine comparison of the discrete and continuum
partition functions, based on a coarse-graining procedure.
First of all, we (easily) deduce \eqref{eq:keyhc} from \eqref{eq:key}.

\subsection{Proof of relation \eqref{eq:keyhc} assuming  \eqref{eq:key}}
\label{sec:hcfe}
We set $\hb = 1$ and we use \eqref{eqParam}-\eqref{eq:hb} 
(with $\epsilon = \frac{1}{N}$)
to rewrite \eqref{eq:key} as follows: 
for all $\hh\in \R$, $\eta > 0$ there exists $\beta_0 > 0$ such that
\begin{equation} \label{eq:rewrite}
{\fhc}\left(1,\hh - \eta\right)\leq 
\frac{\mathrm{F} \big( \beta, \, \hh \, \tilde{L}_\alpha (\frac{1}{\beta})
\, \beta^{\frac{2\alpha}{2\alpha-1}})}{\tilde{L}_\alpha(\frac{1}{\beta})^2 \,
\beta^{\frac{2}{2\alpha-1}}} \leq
{\fhc}\left(1,\hh + \eta\right), \qquad \forall \beta \in (0,\beta_0)\,.
\end{equation}
If we take $\hh := \hcc(1) - 2\eta$, then $\fhc(1,\hh + \eta)=0$ by
the definition \eqref{criticalcurvecont} of $\hcc$. Then \eqref{eq:rewrite}
yields $\mathrm{F}\left(\beta, \hh \, \tilde{L}_\alpha(\frac{1}{\beta})
\, \beta^{\frac{2\alpha}{2\alpha-1}}\right)=0$ for $\beta < \beta_0$,
that is $h_c(\beta) \ge \hh \, \tilde{L}_\alpha(\frac{1}{\beta})
\, \beta^{\frac{2\alpha}{2\alpha-1}}$ by the definition \eqref{eq:hc}
of $h_c$, hence
\begin{equation*}
	\liminf_{\beta \to 0} \frac{h_c(\beta)}{\tilde{L}_\alpha(\frac{1}{\beta})
	\, \beta^{\frac{2\alpha}{2\alpha-1}}} \ge \hh = \hcc(1) - 2\eta \,.
\end{equation*}
Letting $\eta \to 0$ proves ``half'' of \eqref{eq:keyhc}.
The other half follows along the same line, choosing
$\hh := \hcc(1) + 2\eta$ and using the first inequality in \eqref{eq:rewrite}.\qed


\subsection{Renewal process and regenerative set}\label{closedsetsec}
Henceforth we devote ourselves to the proof of relation \eqref{eq:key}.
For $N\in\mathbb{N}$ we consider the \emph{rescaled renewal process}
$$
\frac{\tau}{N}=\left\{\frac{\tau_i}{N}\right\}_{i\in\mathbb{N}}
$$ 
viewed as a random subset of $[0,\infty)$. As $N\to\infty$, 
under the original law $\P$, the
random set $\tau/N$ converges in distribution
to a universal random closed set $\btau$, the so-called $\alpha$-stable regenerative set.
We now summarize the few properties of $\btau$ that will be needed in the sequel,
referring to \cite[Appendix A]{CRZ14} for more details.

Given a closed subset $C\subseteq \mathbb{R}$ and a point $t\in\mathbb{R}$, we define
\begin{equation}\label{gdeq}
\mathtt{g}_t(C):=\sup\left\{x\mid x\in C\cap [-\infty,t)\right\}, \quad \mathtt{d}_t(C):=
\inf\left\{x\mid x\in C\cap [t,\infty)\right\}.
\end{equation}
A key fact is that as $N\to\infty$ 
the process $((\mathtt{g}_t(\tau/N),\mathtt{d}_t(\tau/N))_{t \in [0,\infty)}$
converges in the sense of finite-dimensional distribution to
$((\mathtt{g}_t(\btau),\mathtt{d}_t(\btau))_{t \in [0,\infty)}$
(see \cite[Appendix A]{CRZ14}).

Denoting by $\P_x$ the law of the regenerative set started at $x$, that is 
$\P_x(\btau\in\cdot):=\P(\btau+x\in \cdot)$, 
the joint distribution $(\mathtt{g}_t(\btau),\mathtt{d}_t(\btau))$ is
\begin{align} \label{gdlaw}
\frac{\P_{x}\left(\mathtt{g}_t(\btau) \in \dd u,\mathtt{d}_t(\btau) \in \dd v\right)}
{\dd u \, \dd v} = C_{\alpha}\frac{\ind_{u\in (x,t)}
\ind_{v\in (t,\infty)}}{(u-x)^{1-\alpha}(v-u)^{1+\alpha}},
\end{align}
where $C_{\alpha}=\frac{\alpha\sin(\pi\alpha)}{\pi}$. We can deduce
\begin{gather}\label{glaw}
\frac{\P_{x}\left(\mathtt{g}_t(\btau) \in \dd u\right)}{\dd u}
= \frac{C_{\alpha}}{\alpha}\frac{\ind_{u\in(x,t)}}{(u-x)^{1-\alpha}(t-u)^{\alpha}} \,,
\\
\label{dlaw}
\frac{\P_{x}\left( \left.\mathtt{d}_t(\btau) \in \dd v
\,\right|\, \mathtt{g}_t(\btau) = u \right)}{\dd v}
= \frac{\alpha \, (t-u)^\alpha}{(v-u)^{1+\alpha}} \, \ind_{v\in(t,\infty )} \,.
\end{gather} 

Let us finally state the \emph{regenerative property} of $\btau$.
Denote by $\mathcal{G}_u$ the filtration generated by $\btau\cap [0,u]$
and let $\sigma$ be a $\{\mathcal{G}_u\}_{u\geq 0}$-stopping time such that 
$\P(\sigma\in\btau)=1$ (an example is $\sigma = \mathtt{d}_t(\btau)$).
Then the law of $\btau \cap [\sigma,\infty)$
conditionally on $\mathcal{G}_\sigma$ equals $\P_x |_{x=\sigma}$,
i.e.\, the translated random set $(\btau-\sigma)\cap [0,\infty)$ 
is independent of $\mathcal{G}_{\sigma}$ and it is distributed as
the original $\btau$ under $\P = \P_0$. 


%
%

\subsection{Coarse-grained decomposition}\label{courgradec}

We are going to express the discrete and continuum partition functions 
in an analogous way, in terms of the random sets $\tau/N$ and $\btau$, respectively.

We partition $[0,\infty)$ in intervals 
of length one, called blocks. For a given random set $X$ ---
it will be either the rescaled renewal process
$\tau/N$ or the regenerative set $\btau$ --- we look at the \emph{visited blocks}, i.e.\ those blocks
having non-empty intersection with $X$.
More precisely, we write $[0,\infty)=\bigcup_{k=1}^{\infty}B_k$ , where $B_k=[k-1,k)$,
and we say that a block $B_k$ is visited if $X \cap B_k \ne \emptyset$. If we define
\begin{equation}\label{eq:JJ}
\JJo_{1}(X) := \min\{j>0 : B_j\cap X\neq \emptyset\} \,, \qquad
\JJo_{k}(X) := \min\{j>\JJo_{k-1} : B_j\cap X\neq \emptyset\} \,,
\end{equation}
the visited blocks are $\big(B_{\JJo_k(X)}\big)_{k\in\mathbb{N}}$.
The last visited block before $t$ is $B_{\mmo(X)}$, where we set
\begin{equation}\label{mmeqdef}
\mmo(X) := \sup\{k > 0:\JJo_k(X) \leq t\} \,.
\end{equation}
We call $\sso_k(X)$ and $\tto_k(X)$ the first and last visited points
in the block $B_{\JJo_k(X)}$, i.e. (recalling \eqref{gdeq})
\begin{equation}\label{eq:sstt}
\sso_{k}(X) := \inf\{x\in X\cap B_{\JJo_k}\} = \mathtt{d}_{\JJo_k-1}(X) \,, \qquad
\tto_k(X) := \sup\{x\in X\cap B_{\JJo_k}\} = \mathtt{g}_{\JJo_k}(X) \,.
\end{equation}
(Note that $\JJo_k(X) = \lfloor \sso_k(X)\rfloor = \lfloor \tto_k(X)\rfloor$ can be recovered
from $\sso_{k}(X)$ or $\tto_k(X)$; analogously, $\mmo(X)$ can be recovered from
$(\JJo_k(X))_{k\in\N}$; however, it will be practical to use $\JJo_k(X)$ and $\mmo(X)$.)

\begin{definition}\label{def:Jstm}
The random variables $\left(\JJo_k(X), \sso_k(X), \tto_k(X)\right)_{k\in\mathbb{N}}$
and $(\mmo(X))_{t\in\N}$
will be called the \emph{coarse-grained decomposition} of the random set $X \subseteq [0,\infty)$.
In case $X = \btau$ we will simply
write $\left(\JJ_k, \ss_k, \tt_k\right)_{k\in\mathbb{N}}$ and $(\mm)_{t\in\N}$,
while in case $X = \tau/N$ we will write $(\rJJ_k,\rss_k, \rtt_k)_{k\in\mathbb{N}}$ 
and $(\rmm)_{t\in\N}$.
\end{definition}

\begin{figure}[h]
\centering
\includegraphics[scale=0.8]{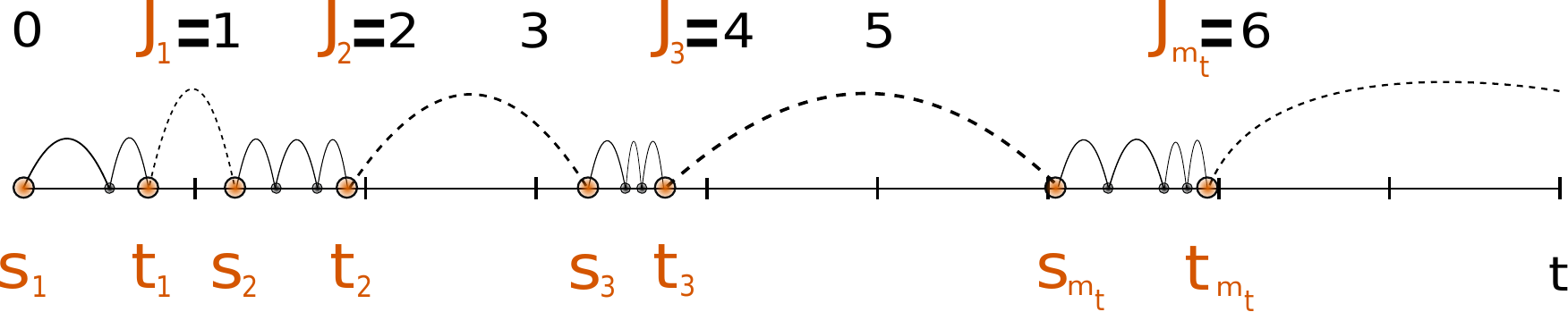}
\caption{In the figure we have pictured a random set $X$, given as the zero level set of a stochastic process, whose excursions are represented by the semi-arcs (dotted arcs represents excursions between two consecutive visited blocks). The coarse-grained decomposition of $X$ is given by the first and last points -- $\sso_k(X), \tto_k(X)$ -- inside each visited block $[\JJ_k-1(X),\JJ_k(X))$, marked by a big dot in the figure. By construction, between visited blocks there are no points of $X$; all of its points are contained in the set $\cup_{k\in\N}\Big[\sso_k(X),\tto_k(X)\Big]$. }
\label{intro:figcoarsegraining}
\end{figure}

\begin{remark}\rm
For every $t\in\N$, one has the convergence in distribution
\begin{equation}\label{eq:codi}
\big(\rmm, (\rss_k, \rtt_k)_{1 \le k \le \rmm}\big) \xrightarrow[N\to\infty]{d}
\big(\mm, (\ss_k, \tt_k)_{1 \le k \le \mm}\big) \,,
\end{equation}
thanks to the convergence in distribution of 
$(\mathtt{g}_s(\tau/N),\mathtt{d}_s(\tau/N))_{s\in\N}$
toward $(\mathtt{g}_s(\btau),\mathtt{d}_s(\btau))_{s\in\N}$.
\end{remark}

Using \eqref{gdlaw} and the regenerative property, one can write
explicitly the joint density of $\JJ_k, \ss_k, \tt_k$. This yields
the following estimates of independent
interest, proved in Appendix~\ref{sec:lemmaP2}.

\begin{lemma}\label{th:lemmaP2}
For any $\alpha \in (0,1)$
there are constants $A_\alpha, B_\alpha \in (0,\infty)$ such that for all $\gamma \ge 0$
\begin{gather}\label{lemmaP2}
\sup_{(x,y)\in [0,1]_{\leq}^2}
\P_x\left(\left. \tt_2\in [\JJ_2-\gamma,\JJ_2]\,\right\vert\, \tt_{1}=y\right)\leq A_\alpha 
\, \gamma^{1-\alpha} \,, \\
\label{eq:LUC1}
 \sup_{(x,y)\in [0,1]_{\leq}^2} \P_{x}\left(\left. 
 \tt_2-\ss_2\leq \gamma \, \right\vert\, \tt_1=y\right) \leq B_\alpha \, \gamma^{\alpha} \,,
\end{gather}
where $\P_{x}$ is the law of the $\alpha$-stable regenerative set starting from $x$.
\end{lemma}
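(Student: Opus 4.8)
The plan is to compute explicitly the joint law of $(\JJ_2, \ss_2, \tt_2)$ conditionally on $\tt_1 = y$, using only the regenerative property of $\btau$ and the explicit densities \eqref{gdlaw}--\eqref{dlaw}, and then to integrate out the variables we do not control. First I would note that, conditionally on $\tt_1 = y$ (with $y \in [x,1)$, so the first visited block is $B_1 = [0,1)$), the regenerative property at the stopping time $\tt_1 = \mathtt{g}_{\JJ_1}(\btau)$ is not quite what we want, since $\tt_1$ is a ``$\mathtt{g}$'' time rather than a ``$\mathtt{d}$'' time and need not belong to $\btau$ as a regeneration epoch in the naive sense; instead I would condition on $\tt_1 = y$ and use that the portion of $\btau$ strictly after $y$ is, by the strong Markov / regenerative structure described in \S\ref{closedsetsec}, distributed as an independent $\alpha$-stable regenerative set started at the first point of $\btau$ after $y$. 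Equivalently — and this is cleaner — I would first find $\ss_2$, the first visited point in the \emph{next} visited block after $B_1$: given $\tt_1 = y$, the point $\ss_2$ is the first point of $\btau \cap [1,\infty)$, whose conditional law is governed by the overshoot formula \eqref{dlaw} (with $t = 1$, $u = y$), namely $\P(\ss_2 \in \dd v \mid \tt_1 = y) = \alpha (1-y)^\alpha (v-y)^{-1-\alpha}\,\ind_{v > 1}\,\dd v$. Then $\JJ_2 = \lfloor \ss_2 \rfloor$, and by the regenerative property at $\ss_2 \in \btau$, the set $(\btau - \ss_2)\cap[0,\infty)$ is a fresh $\alpha$-stable regenerative set; in particular $\tt_2 - \ss_2 = \mathtt{g}_{\lceil \ss_2\rceil - \ss_2}\big((\btau-\ss_2)\cap[0,\infty)\big)$, i.e.\ it is the ``$\mathtt g$'' of a fresh regenerative set at the deterministic-given-$\ss_2$ level $r := \JJ_2 + 1 - \ss_2 = \lceil \ss_2\rceil - \ss_2 \in (0,1]$, whose law is given by \eqref{glaw} with $x = 0$: $\P(\tt_2 - \ss_2 \in \dd w \mid \ss_2) = \frac{C_\alpha}{\alpha} w^{\alpha-1}(r-w)^{-\alpha}\,\ind_{0<w<r}\,\dd w$.

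With these two explicit densities in hand, both bounds follow by elementary integration. For \eqref{eq:LUC1}: $\P(\tt_2 - \ss_2 \le \gamma \mid \ss_2) = \frac{C_\alpha}{\alpha}\int_0^{\gamma \wedge r} w^{\alpha-1}(r-w)^{-\alpha}\,\dd w$; substituting $w = rs$ gives $\frac{C_\alpha}{\alpha}\int_0^{(\gamma/r)\wedge 1} s^{\alpha-1}(1-s)^{-\alpha}\,\dd s$, which is $\le \frac{C_\alpha}{\alpha}\int_0^{(\gamma/r)\wedge 1} s^{\alpha-1}\cdot(\text{bounded away from }s\to 1?)$ — here one has to be mildly careful near $s=1$, but if $\gamma < r/2$ the integrand is $\le 2^\alpha s^{\alpha-1}$ and the integral is $\le \frac{2^\alpha}{\alpha}(\gamma/r)^\alpha \le C\gamma^\alpha$ since $r \le 1$; the case $\gamma \ge r/2$ is handled by splitting and using $\int_0^1 s^{\alpha-1}(1-s)^{-\alpha}\dd s = \pi/\sin(\pi\alpha) < \infty$ together with $\gamma^\alpha \ge (r/2)^\alpha$, so the probability (which is $\le 1$) is $\le (2/r)^\alpha \gamma^\alpha$-controllable after taking a sup only over $r$ bounded below — and $r$ is not bounded below, but then $\gamma \ge r/2$ forces $\gamma$ comparable to $r$, and the total-mass bound $\P(\cdots) \le \frac{C_\alpha}{\alpha}\mathrm{B}(\alpha,1-\alpha) =: B_\alpha'$ combined with monotonicity in $\gamma$ does the job after one more elementary step. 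Since this holds for every value of $\ss_2$, it holds after averaging over $\ss_2$, and the sup over $(x,y) \in [0,1]^2_\le$ is harmless because the bound is uniform in $y$. For \eqref{lemmaP2}: $\{\tt_2 \in [\JJ_2 - \gamma, \JJ_2]\}$ means $\tt_2 - \ss_2 \ge \JJ_2 - \gamma - \ss_2 = r - 1 - \gamma$... more precisely $\tt_2 \le \JJ_2 = \ss_2 + r - 1$ always (since $\tt_2 < \JJ_2$) and $\tt_2 \ge \JJ_2 - \gamma$ means $\tt_2 - \ss_2 \ge r - 1 - \gamma$; using $\tt_2 - \ss_2 < r - 1$... wait, $\tt_2 - \ss_2 \in (0, r)$ and $\JJ_2 - \ss_2 = r-1 \in (-1,0]$, so actually $\tt_2 \ge \JJ_2$ is automatic when $r \le 1$ unless... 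I would re-examine the indexing here, but the upshot is that the event translates into $\tt_2 - \ss_2$ lying in an interval of length $\gamma$ near the \emph{right} endpoint $r$ of its support, i.e.\ $r - \gamma < \tt_2 - \ss_2 < r$, whose conditional probability is $\frac{C_\alpha}{\alpha}\int_{(r-\gamma)\vee 0}^{r} w^{\alpha-1}(r-w)^{-\alpha}\dd w \le \frac{C_\alpha}{\alpha}(r-\gamma)^{\alpha-1}_+\cdot\frac{1}{1-\alpha}\gamma^{1-\alpha}$ — but $(r-\gamma)^{\alpha-1}$ blows up as $\gamma \to r$, so again I split: if $\gamma \le r/2$, bound $w^{\alpha-1} \le (r/2)^{\alpha-1}$ on the interval and get $\le C (r/2)^{\alpha-1} \cdot \frac{\gamma^{1-\alpha}}{1-\alpha} \le C' \gamma^{\alpha-1}\gamma^{1-\alpha}$... that's not right dimensionally either — I would instead substitute $w = r - \gamma s'$ or simply use that the whole integral over $(0,r)$ equals $\frac{\alpha}{C_\alpha}$ (it's a probability) and that on $(r - \gamma, r)$, $\int w^{\alpha-1}(r-w)^{-\alpha}\dd w \le r^{\alpha-1}\int_0^\gamma s^{-\alpha}\dd s = \frac{r^{\alpha-1}}{1-\alpha}\gamma^{1-\alpha}$ when $\gamma \le r$, wait $w \ge r - \gamma \ge 0$ doesn't give $w \ge$ const; the clean fix is: on $(r-\gamma, r)$ with $\gamma \le r/2$ we have $w \ge r/2$ so $w^{\alpha-1} \le (r/2)^{\alpha-1} \le 2^{1-\alpha} r^{\alpha-1}$, and since $r \le 1$, $r^{\alpha-1} \ge 1$ — this still doesn't produce a clean $\gamma^{1-\alpha}$ bound uniform in $r$. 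The honest statement is that \eqref{lemmaP2} should be read with an implicit maximum with $1$ (since it's a probability), and for the range where $\gamma^{1-\alpha}$ is the binding term one can take $\gamma$ small; then $r$ small forces $\ss_2$ very close to an integer, an event of small probability, and a short additional computation averaging the density of $\ss_2$ (i.e.\ of $r = \lceil \ss_2 \rceil - \ss_2$, whose density near $0$ is integrable against $r^{\alpha-1}$) recovers the claimed power.

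\textbf{Main obstacle.} The genuinely delicate point is the behavior of the conditional density of $\tt_2 - \ss_2$ near the \emph{endpoints} of its support $(0,r)$, which are the integrable singularities $w^{\alpha-1}$ and $(r-w)^{-\alpha}$, compounded by the fact that $r = \lceil\ss_2\rceil - \ss_2$ is itself random and can be arbitrarily small. For \eqref{eq:LUC1} the relevant singularity is the $w^{\alpha-1}$ at the left endpoint, which gives exactly the $\gamma^\alpha$ rate and is uniform; for \eqref{lemmaP2} the relevant singularity is $(r-w)^{-\alpha}$ at the right endpoint, giving the $\gamma^{1-\alpha}$ rate, but the prefactor involves a negative power of $r$ and must be absorbed by averaging over the law of $\ss_2$ (equivalently $r$), using that $r \mapsto r^{\alpha-1}$ is integrable near $0$ against the overshoot density. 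Carrying out this averaging cleanly — and bookkeeping the two-step conditioning (first $\ss_2$ via the overshoot formula \eqref{dlaw}, then $\tt_2 - \ss_2$ via \eqref{glaw} with the fresh regenerative set) — is where the real work lies; everything else is a beta-function computation. I would structure the appendix proof as: (1) derive the two conditional densities via the regenerative property; (2) prove \eqref{eq:LUC1} by the substitution $w = rs$ and the uniform bound $r \le 1$; (3) prove \eqref{lemmaP2} by the change of variable near the right endpoint followed by integration against the law of $r$.
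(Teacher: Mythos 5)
Your overall strategy is the same as the paper's: condition on $\tt_1 = y$, use the overshoot density \eqref{dlaw} for $\ss_2 = \mathtt{d}_1(\btau)$, then apply the regenerative property at $\ss_2$ and the law \eqref{glaw} to get the conditional density $\frac{C_\alpha}{\alpha}\,w^{\alpha-1}(r-w)^{-\alpha}$ of $\tt_2-\ss_2$ on $(0,r)$ with $r = \lceil\ss_2\rceil-\ss_2$. (In the paper this appears as the joint density \eqref{glawcon} of $(\ss_2,\tt_2,\JJ_2)$; your two-step conditioning is an equivalent parametrization, modulo the typo $r:=\JJ_2+1-\ss_2$, which should read $r:=\JJ_2-\ss_2=\lceil\ss_2\rceil-\ss_2$.)

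There is, however, a genuine gap in your treatment of \eqref{eq:LUC1}. You claim a uniform conditional bound $\P(\tt_2-\ss_2\le\gamma\,|\,\ss_2)\le C\gamma^\alpha$, and in justifying it you write ``$\le \frac{2^\alpha}{\alpha}(\gamma/r)^\alpha\le C\gamma^\alpha$ since $r\le 1$.'' This inequality goes the wrong way: $r\le 1$ gives $\gamma/r\ge\gamma$, hence $(\gamma/r)^\alpha\ge\gamma^\alpha$, not $\le$. The conditional probability given $\ss_2$ is genuinely a function of $\gamma/r$ and is of order $1$ (not $\gamma^\alpha$) whenever $r$ is comparable to $\gamma$. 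Your ``main obstacle'' paragraph attributes the need to average over $r$ only to \eqref{lemmaP2}, but it is equally essential for \eqref{eq:LUC1}: in both cases the $r$-dependent singularity ($r^{-\alpha}$ for \eqref{eq:LUC1} near the left endpoint, effectively $r^{\alpha-1}$ for \eqref{lemmaP2} near the right endpoint after the change of variable) must be integrated against the overshoot law of $\ss_2$ — which has bounded density in $r$ near $0$ within each block, and decays like $n^{-1-\alpha}$ over the block index $n=\JJ_2$, making the sum over $n$ convergent. This is exactly what the paper does explicitly: in \eqref{eq:intze}--\eqref{eq:intze2} the bound $(n-w)^{-\alpha}\le(n-\gamma-z)^{-\alpha}$ produces an integrable $(n-\gamma-z)^{-\alpha}$ singularity in the $z$-integral, and summing over $n$ gives $\sum_n c_\alpha\gamma^\alpha/n^{1+\alpha}$. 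Your hand-wavy remarks (``$\gamma\ge r/2$ forces $\gamma$ comparable to $r$ ... one more elementary step'') point in the right direction but do not constitute a proof; the averaging computation is the content of the lemma, not an afterthought, and should be carried out for both inequalities, together with the verification that the sum over $\JJ_2=n$ converges.
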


\smallskip

We are ready to express the partition functions $\Znf(Nt)$ and $\Zf(t)$
in terms of the random sets $\tau/N$ and $\btau$, 
through their coarse-grained decompositions. 
Recall that $\beta_N, h_N$ are linked to $N$ and $\hb, \hh$ by \eqref{eqParam}.
For notational lightness,
we denote by $\E$ the expectation
with respect to either $\tau/N$ or $\btau$.

\begin{theorem}[Coarse-grained Hamiltonians]
For $t\in\mathbb{N}$ we can write the discrete and continuum partition functions as follows:
\begin{equation}\label{HVCF}
\Znf(Nt) = \E \left[ e^{\mathrm{H}_{N,t;\hb, \hh}^\omega(\tau/N)} \right] \,, \qquad
\Zf(t) = \E \left[ e^{\mathbf{H}_{t;\hb,\hh}^W(\btau)} \right] \,,
\end{equation}
where the \emph{coarse-grained Hamiltonians} $\mathrm{H}(\tau/N)$ and $\mathbf{H}(\btau)$
depend on the random sets $\tau/N$ and $\btau$ only through their coarse-grained
decompositions, and are defined by
\begin{equation}\label{Hm}
\mathrm{H}_{N,t;\hb, \hh}^\omega(\tau/N)
:= \sum_{k=1}^{\rmm}\log\Zn(N\rss_k, N\rtt_k) \,, \qquad
\mathbf{H}^W_{t; \hb,\hh}(\btau)=\sum_{k=1}^{\mm}\log\Z(\ss_k,\tt_k) \,.
\end{equation}
\end{theorem}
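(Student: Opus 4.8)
The plan is to obtain both identities in \eqref{HVCF} by conditioning on the coarse-grained decomposition of the relevant random set and recognizing that, given this decomposition, the partition function factorizes over the visited blocks. I would treat the discrete case first, since it is elementary, and then argue that the continuum case follows by the same bookkeeping once one replaces the renewal $\tau/N$ by the regenerative set $\btau$ and the discrete conditioned partition functions $\Zn(N\sso,N\tto)$ by their continuum counterparts $\Z(\sso,\tto)$. The key point in both cases is that the interaction in the Hamiltonian of the pinning model is supported on the set of visited points, and between two consecutive visited blocks $B_{\JJo_k}$ and $B_{\JJo_{k+1}}$ there are \emph{no} points of the random set (by the very definition \eqref{eq:JJ}–\eqref{eq:sstt} of the coarse-grained decomposition, cf.\ Figure~\ref{intro:figcoarsegraining}), so those gaps contribute nothing to the Hamiltonian.

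For the discrete identity, I would start from the definition \eqref{model} of $\Znf(Nt)$, condition on the points of $\tau$ that fall outside the interiors of the visited blocks — equivalently, condition on the pair $\big(\rmm,(\rss_k,\rtt_k)_{1\le k\le\rmm}\big)$ together with the requirement that the last visited block before $Nt$ is $B_{\rJJ_{\rmm}}$ — and use the renewal (Markov) property. Given this data, the law of $\tau$ inside each visited block $B_{\rJJ_k(\tau/N)}$, conditioned to start at $N\rss_k$ and end at $N\rtt_k$, is exactly the one appearing in the conditioned partition function \eqref{discrcondfor}; moreover the disorder $\omega$ only enters through those same in-block configurations. Hence
\begin{equation*}
\Znf(Nt)=\E\!\left[\prod_{k=1}^{\rmm}\Zn(N\rss_k,N\rtt_k)\right]
=\E\!\left[e^{\sum_{k=1}^{\rmm}\log\Zn(N\rss_k,N\rtt_k)}\right],
\end{equation*}
which is the first relation in \eqref{HVCF} with $\mathrm{H}$ as in \eqref{Hm}. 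One should be a little careful with the tail contribution after the last visited point $N\rtt_{\rmm}$ up to $Nt$ and beyond; but since there are no points of $\tau$ in $(N\rtt_{\rmm},\infty)\cap(0,Nt]$ on the relevant event, this piece carries no weight and simply produces the normalization already built into \eqref{discrcondfor} and the density \eqref{gdlaw}. A clean way to package all of this is to integrate the explicit joint density of $(\rJJ_k,\rss_k,\rtt_k)$ — obtainable from the renewal mass function $u(\cdot)$ — against the product of in-block conditioned partition functions, exactly paralleling the computation that led to \eqref{eq:multiple}.

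For the continuum identity I would run the same argument at the level of $\btau$: condition on $\big(\mm,(\ss_k,\tt_k)_{1\le k\le\mm}\big)$, invoke the regenerative property stated in \S\ref{closedsetsec} (applied at the stopping times $\mathtt{d}_{\JJ_k-1}(\btau)$, which lie in $\btau$ a.s.) to decouple the excursions inside distinct visited blocks, and recognize the conditional law of $\btau$ inside $B_{\JJ_k}$ started at $\ss_k$ and ended at $\tt_k$ as the one underlying $\Z(\ss_k,\tt_k)$ via its Wiener-chaos expansion \eqref{eq:Wiener2}. Since the continuum disorder $W$ has independent increments, the factor carried by each block is precisely $\Z(\ss_k,\tt_k)$ (built from the restriction of $W$ to $B_{\JJ_k}$), and the portion of $W$ over the unvisited gaps is irrelevant because $\btau$ avoids those gaps. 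This yields $\Zf(t)=\E\big[\prod_{k=1}^{\mm}\Z(\ss_k,\tt_k)\big]$, i.e.\ the second relation in \eqref{HVCF}. Rather than re-deriving the continuum conditional laws from scratch, the most economical route is to take the limit $N\to\infty$ in the discrete identity: the convergences \eqref{eq:codi} of the coarse-grained data and \eqref{eq:Sk2} of the conditioned partition functions (locally uniformly, from Theorem~\ref{PropByProd}), together with the moment and deviation bounds of Proposition~\ref{p1results} and Corollary~\ref{p1resultscont} to secure uniform integrability of $e^{\mathrm{H}}$, pass the discrete identity to its continuum counterpart.

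The main obstacle I anticipate is the bookkeeping at the \emph{last} visited block and the region $[N\rtt_{\rmm},Nt]$: one must check that no extra multiplicative factor (e.g.\ a ``last excursion overshooting $Nt$'' term) sneaks into the product, which amounts to verifying that the density of $(\rJJ_k,\rss_k,\rtt_k)_{k\le\rmm}$ conditioned on $\rmm$ exactly matches the product of $u(\cdot)$-kernels that cancels against the denominators of the conditioned partition functions \eqref{discrcondfor}. A secondary technical point, if one derives the continuum identity directly rather than by passage to the limit, is justifying the application of the regenerative property simultaneously at the (random) number $\mm$ of stopping times and controlling the convergence of the resulting series, for which the estimates \eqref{lemmaP2}–\eqref{eq:LUC1} of Lemma~\ref{th:lemmaP2} and the integral bound from \cite[Lemma B.3]{CRZ13} used in \eqref{eq:ineq} are exactly what is needed.
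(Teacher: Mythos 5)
Your proposal takes essentially the same route as the paper: for the discrete identity, condition on the coarse-grained data $(\rmm,(\rss_k,\rtt_k))$ and use the renewal property to factorize the expectation over visited blocks into a product of conditioned partition functions; for the continuum identity, either argue analogously via the regenerative property or, as you rightly flag as the cleaner option, pass to the limit $N\to\infty$ in the discrete identity using the a.s.\ convergence of the coarse-grained data \eqref{eq:codi} together with the uniform convergence from Theorem~\ref{PropByProd}, the interchange of limit and expectation being secured by dominated convergence (which is what the moment bounds of Proposition~\ref{p1results} and Corollary~\ref{p1resultscont} deliver, as you note). The extra care you take about the last visited block and about the fact that gaps contribute nothing to the Hamiltonian is exactly the bookkeeping the paper subsumes under ``the renewal property of $\tau$ yields~\eqref{HVDF}''.
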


\begin{proof}
Starting from the definition \eqref{model}
of $\Znf(Nt)$, we disintegrate according
to the random variables $\rmm$ and $(\rss_k, \rtt_k)_{1 \le k \le \rmm}$.
Recalling \eqref{discrcondfor},
the renewal property of $\tau$ yields
\begin{align}\label{HVDF}
\Znf(Nt)=&\E\left[ \Zn(0,N\rtt_1) \, \Zn(N\rss_2, N\rtt_2)\cdots
\Zn(N\rss_{\rmm},N\rtt_{\rmm}) \right] \,,
\end{align}
which is precisely the first relation in \eqref{HVCF}, with $\mathrm{H}$
defined as in \eqref{Hm}.

The second relation in \eqref{HVCF} can be proved with analogous arguments,
by the regenerative property of $\btau$. Alternatively,
one can exploit the convergence in distribution \eqref{eq:codi},
that becomes a.s.\ convergence under a suitable coupling of
$\tau/N$ and $\btau$; since $\Zn(Ns,Nt) \to \Z(s,t)$
uniformly for $0 \le s \le t \le T$,
under a coupling of $\omega$ and $W$ (by Theorem~\ref{PropByProd}),
letting $N\to\infty$ in \eqref{HVDF} 
yields, by dominated convergence, the second relation in \eqref{HVCF}, with $\mathbf{H}$
defined as in \eqref{Hm}.
\end{proof}

The usefulness of the representations in \eqref{HVCF} is that they express the discrete
and continuum partition functions in closely analogous ways,
which behave well in the continuum limit $N\to\infty$.
To appreciate this fact,
note that although the discrete partition function is expressed through
an Hamiltonian of the form $\sum_{n=1}^N (\beta \omega_n - \Lambda(\beta) + h) \ind_{\{n\in\tau\}}$, 
cf.\ \eqref{model}, such a ``microscopic'' Hamiltonian
admits no continuum analogue, because
the continuum disordered pinning model studied in \cite{CRZ14} is \emph{singular} 
with respect to the regenerative set $\btau$, cf.\ \cite[Theorem 1.5]{CRZ14}.
The ``macroscopic'' coarse-grained Hamiltonians in \eqref{Hm}, on the other hand,
will serve our purpose.

\subsection{General Strategy}
\label{sec:strate}

We now describe a general strategy to prove the key relation \eqref{eq:key} of
Theorem~\ref{Thm2}, exploiting the representations in \eqref{HVCF}.
We follow the strategy developed for the copolymer model
in \cite{BdH97,CG10}, with some simplifications and strengthenings.

\begin{definition}
Let $f_{t}(N,\hb,\hh)$ and $g_{t}(N,\hb,\hh)$ be two real functions
of $t, N\in\N$, $\hb > 0$, $\hh\in\R$.
We write $f\prec g$ if for 
all fixed $\hb, \hh, \hh'$ with $\hh <\hh'$ there exists $N_0(\hb,\hh,\hh') < \infty$ 
such that for all $N>N_0$ 
\begin{equation}
\begin{split}
&\limsup_{t\to\infty}f_{t}(N,\hb,\hh)\leq \limsup_{t\to\infty}g_{t}(N,\hb,\hh'),\\
&\liminf_{t\to\infty}f_{t}(N,\hb,\hh)\leq \liminf_{t\to\infty}g_{t}(N,\hb,\hh').
\end{split}
\end{equation}
where the limits are taken along $t\in\N$.
If both $f\prec g$ and $g\prec f$ hold, then we write $f\simeq g$.
\end{definition}
Keeping in mind \eqref{DFE} and \eqref{CFcE}, we define $f^{(1)}$ and $f^{(3)}$ respectively 
as the continuum and discrete (rescaled) finite-volume free energies, averaged over the disorder:
\begin{align}
\label{fvdfr}
&f_{t}^{(1)}(N,\hb,\hh) 
:=\frac{1}{t}\mathbb{E}\left(\log\Zf (t)\right)\,,\\
\label{fvcfr}
&f_{t}^{(3)}(N,\hb,\hh) 
:=\frac{1}{t}\mathbb{E}\left(\log\Znf(Nt)\right)  \,.
\end{align}
(Note that $f^{(1)}$ does not depend on $N$.)
\emph{Our goal is to prove that $f^{(3)}\simeq f^{(1)}$},
because this yields the key relation \eqref{eq:key} in Theorem~\ref{Thm2},
and also the existence of the 
averaged continuum free energy as $t\to\infty$ along $t\in\N$ 
(thus proving part of Theorem~\ref{Thm1}). Let us start checking these claims.

\begin{lemma}\label{th:impex}
Assuming $f^{(3)}\simeq f^{(1)}$, the following limit exists
along $t\in\N$ and is finite:
\begin{equation} \label{eq:impex}
	\fhc(\hb,\hh) := \lim_{t\to\infty} f_{t}^{(1)}(N,\hb,\hh) =
	\lim_{t\to\infty}
	\frac{1}{t}\mathbb{E}\left(\log\Zf (t)\right) \,.
\end{equation}
\end{lemma}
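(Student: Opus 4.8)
The plan is to squeeze the continuum quantity $f^{(1)}$ from both sides by using that, for each fixed $N$, the discrete quantity $f^{(3)}_t(N,\hb,\hh)$ genuinely \emph{converges} as $t\to\infty$, whereas the hypothesis $f^{(3)}\simeq f^{(1)}$ only compares $\limsup$'s and $\liminf$'s at \emph{shifted} values of $\hh$. Set $\overline{L}(\hh):=\limsup_{t\to\infty,\,t\in\N}f^{(1)}_t(N,\hb,\hh)$ and $\underline{L}(\hh):=\liminf_{t\to\infty,\,t\in\N}f^{(1)}_t(N,\hb,\hh)$; these are well defined because $\bbE[\log\Zf(t)]$ is finite for each $t$ (by Corollary~\ref{p1resultscont} one has $\bbE[\Zf(t)^{\pm1}]<\infty$, and $|\log x|\le x+x^{-1}$), and by the note following \eqref{fvcfr} they do not depend on $N$. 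Proving \eqref{eq:impex} amounts to showing $\overline{L}(\hh)=\underline{L}(\hh)\in[0,\infty)$ for every $\hh$.

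First I would record that \eqref{DFE} defines a genuine limit, so along $M=Nt$ with $t\in\N$ one gets $\lim_{t\to\infty,\,t\in\N}f^{(3)}_t(N,\hb,\hh)=N\,\mathrm{F}(\beta_N,h_N)$, a finite number, and nonnegative by \eqref{pinn3}; in particular $\limsup$ and $\liminf$ of $f^{(3)}$ coincide. Next I would unfold $f^{(3)}\simeq f^{(1)}$: fixing $\hb$ and any $\hh_1<\hh_2<\hh_3$, applying $f^{(1)}\prec f^{(3)}$ to the pair $(\hh_1,\hh_2)$ and $f^{(3)}\prec f^{(1)}$ to the pair $(\hh_2,\hh_3)$ produces an $N$ large enough that
\[
	\overline{L}(\hh_1)\ \le\ N\,\mathrm{F}\big(\beta_N,\, \hh_2 L(N)/N^{\alpha}\big)\ \le\ \underline{L}(\hh_3),
\]
so that $\overline{L}(\hh_1)\le\underline{L}(\hh_3)$ whenever $\hh_1<\hh_3$. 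Using $f^{(1)}\prec f^{(3)}$ and $f^{(3)}\prec f^{(1)}$ separately, together with $0\le \mathrm{F}<\infty$, one also gets $0\le\underline{L}(\hh)\le\overline{L}(\hh)<\infty$ for every $\hh$, so both functions are real-valued.

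It remains to close the gap between $\overline{L}$ and $\underline{L}$, which I would do by convexity. For fixed $t$, $\hh\mapsto\log\Zf(t)$ is convex (H\"older's inequality for the discrete partition functions, inherited in the limit; see also Proposition~\ref{th:continuous}), hence $f^{(1)}_t(N,\hb,\cdot)$ is convex after taking $\bbE[\cdot]$ and dividing by $t>0$; a pointwise $\limsup$ of convex functions is convex, so $\overline{L}$ is a finite convex, hence continuous, function on $\R$. Letting $\hh_1\uparrow\hh_3$ in $\overline{L}(\hh_1)\le\underline{L}(\hh_3)$ yields $\overline{L}(\hh_3)\le\underline{L}(\hh_3)$, while $\underline{L}\le\overline{L}$ always holds; hence the limit in \eqref{eq:impex} exists, is finite, and we set $\fhc(\hb,\hh):=\overline{L}(\hh)=\underline{L}(\hh)$. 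The only genuinely load-bearing point is the interplay between (i) $f^{(3)}$ possessing a true limit (so its $\limsup$ and $\liminf$ agree) and (ii) the continuity of $\overline{L}$ in $\hh$: together these promote the ``shifted'' comparisons encoded in $\simeq$ to an equality at a fixed $\hh$. Everything else is routine bookkeeping.
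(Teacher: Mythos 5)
Your proposal is correct and follows essentially the same route as the paper's proof: establish that $f^{(3)}_t(N,\hb,\hh)$ has a genuine limit $N\,\mathrm{F}(\beta_N,h_N)$ as $t\to\infty$, chain the two one-sided comparisons from $f^{(3)}\simeq f^{(1)}$ through this true limit to get an inequality between the $\limsup$ and $\liminf$ of $f^{(1)}$ at shifted values of $\hh$, then invoke convexity (hence continuity) in $\hh$ to let the shift vanish. The paper phrases the last step in terms of the $\limsup$ as a convex function of the shift parameter $\epsilon$, whereas you work directly with $\overline{L}(\hh)$ as a finite convex function of $\hh$; this is merely a linear reparametrization and makes the continuity point, if anything, slightly more transparent, since $\overline{L}$ is a finite convex function on the whole line.
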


\begin{proof}
The key point is that 
$f_{t}^{(3)}$ admits a limit as $t\to\infty$: by
\eqref{DFE}, for all $N\in\N$ we can write
\begin{equation}\label{eq:truelimit}
\lim_{t\to\infty} f_{t}^{(3)}(N,\hb,\hh)
=N\,\mathrm{F}(\beta_N,h_N) \,
\end{equation}
where we 
agree that limits are taken along $t\in\N$.
For every $\epsilon > 0$, the relation $f^{(3)}\simeq f^{(1)}$ yields
\begin{equation} \label{eq:131}
	\limsup_{t\to\infty} f_{t}^{(1)}(N,\hb,\hh - 2\epsilon)
	\le \lim_{t\to\infty} f_{t}^{(3)}(N,\hb,\hh - \epsilon)
	\le \liminf_{t\to\infty} f_{t}^{(1)}(N,\hb,\hh) \,,
\end{equation}
for $N\in\N$ large enough (depending on $\hb, \hh$ and $\epsilon$).
Plugging the definition \eqref{fvdfr} of $f_{t}^{(1)}$,
which does not depend on $N\in\N$, into this relation, we get
\begin{equation}
\begin{split}
\limsup_{t\to\infty }\frac{1}{t}\mathbb{E}\left(\log\mathbf{Z}_{\hb,\hh - 2\epsilon}^W (t)\right)
\le 
\liminf_{t\to\infty }\frac{1}{t}\mathbb{E}\left(\log\mathbf{Z}_{\hb,\hh}^W (t)\right) .
\end{split}
\end{equation}
The left hand side of this relation is a convex function of $\epsilon \ge 0$
(being the $\limsup$ of convex functions, by Proposition~\ref{th:continuous})
and is finite (it is bounded by $N \,\mathrm{F}(\beta_N,h_N) < \infty$,
by \eqref{eq:truelimit} and \eqref{eq:131}). It follows that it is a continuous
function of $\epsilon \ge 0$, so letting $\epsilon \downarrow 0$ completes the proof.
\end{proof}

\begin{lemma}
Assuming $f^{(3)}\simeq f^{(1)}$, relation \eqref{eq:key} in Theorem~\ref{Thm2}
holds true.
\end{lemma}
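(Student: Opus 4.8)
The plan is to simply unwind the definition of the relation $\simeq$ and feed into it the two facts already at our disposal. Under the standing hypothesis $f^{(3)}\simeq f^{(1)}$, the limit $\lim_{t\to\infty}f^{(1)}_t(N,\hb,\hh)=\fhc(\hb,\hh)$ exists and is finite for every $\hh$ by Lemma~\ref{th:impex}, while $\lim_{t\to\infty}f^{(3)}_t(N,\hb,\hh)=N\,\mathrm{F}(\beta_N,h_N)$ by \eqref{eq:truelimit}; in particular, in the definition of $\prec$ every $\limsup$ and $\liminf$ may be replaced by an honest limit.

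First I would apply $f^{(1)}\prec f^{(3)}$ to the pair of parameters $\hh-\eta<\hh$: this yields $N_1=N_1(\hb,\hh,\eta)<\infty$ such that for all $N>N_1$,
\[
\fhc(\hb,\hh-\eta)=\lim_{t\to\infty}f^{(1)}_t(N,\hb,\hh-\eta)\ \le\ \lim_{t\to\infty}f^{(3)}_t(N,\hb,\hh)=N\,\mathrm{F}(\beta_N,h_N)\,,
\]
where $\beta_N,h_N$ are the parameters attached to $\hh$ via \eqref{eqParam}. Symmetrically, applying $f^{(3)}\prec f^{(1)}$ to the pair $\hh<\hh+\eta$ yields $N_2=N_2(\hb,\hh,\eta)<\infty$ such that for all $N>N_2$,
\[
N\,\mathrm{F}(\beta_N,h_N)=\lim_{t\to\infty}f^{(3)}_t(N,\hb,\hh)\ \le\ \lim_{t\to\infty}f^{(1)}_t(N,\hb,\hh+\eta)=\fhc(\hb,\hh+\eta)\,.
\]

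To conclude, I would set $\epsilon_0:=1/\max\{N_1,N_2\}$ and rewrite the last two displays in the variable $\epsilon=1/N\in(0,\epsilon_0)$. By \eqref{eqParam} one has $\beta_N=\hb\,\epsilon^{\alpha-\frac12}L(\tfrac1\epsilon)$ and $h_N=\hh\,\epsilon^{\alpha}L(\tfrac1\epsilon)$, so $N\,\mathrm{F}(\beta_N,h_N)=\epsilon^{-1}\,\mathrm{F}\big(\hb\,\epsilon^{\alpha-\frac12}L(\tfrac1\epsilon),\,\hh\,\epsilon^{\alpha}L(\tfrac1\epsilon)\big)$, and the two inequalities above become exactly the sandwich \eqref{eq:key}. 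There is essentially no obstacle here beyond bookkeeping: the only point deserving a moment of care is correctly matching the two $\hh$-slots appearing in the definition of $\prec$ with the correct shifted values — $\hh-\eta$ feeding the $f^{(1)}$ side in the lower bound, $\hh+\eta$ feeding the $f^{(1)}$ side in the upper bound — and recalling that Lemma~\ref{th:impex} already guarantees $\fhc(\hb,\cdot)$ is well defined (and finite) at these shifted arguments, so that no extra continuity or finiteness argument is needed at this stage.
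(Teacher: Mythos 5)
Your argument follows the paper's proof essentially step for step: convert the $\limsup/\liminf$ in the definition of $\prec$ into genuine limits via Lemma~\ref{th:impex} and \eqref{eq:truelimit}, then read off the two sides of the sandwich by applying $f^{(1)}\prec f^{(3)}$ at the pair $(\hh-\eta,\hh)$ and $f^{(3)}\prec f^{(1)}$ at $(\hh,\hh+\eta)$, and finally substitute $\epsilon=1/N$. The bookkeeping of which $\hh$-slot receives which shifted value is handled correctly, and the use of Lemma~\ref{th:impex} to guarantee that $\fhc(\hb,\cdot)$ is well defined at the shifted arguments is exactly right.

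There is one small but genuine omission. The relation $\prec$ is defined with the size parameter $N$ ranging over $\N$, so the two inequalities you derive hold a priori only for integer $N>\max\{N_1,N_2\}$, i.e.\ for $\epsilon$ running over the discrete set $\{1/N: N\in\N,\ N>\max\{N_1,N_2\}\}$. But \eqref{eq:key} is a statement for \emph{every} $\epsilon\in(0,\epsilon_0)$, and setting $\epsilon_0:=1/\max\{N_1,N_2\}$ does not by itself bridge this gap. The paper addresses the point explicitly, observing that the sandwich $\fhc(\hb,\hh-\eta)\le N\,\mathrm{F}(\beta_N,h_N)\le\fhc(\hb,\hh+\eta)$ holds also for non-integer $N\ge N_0$, because relation \eqref{eq:truelimit} --- and, implicitly, the entire chain of arguments establishing $f^{(1)}\simeq f^{(3)}$ --- carries over verbatim to real $N$ (the quantities $f^{(i)}_t(N,\cdot,\cdot)$ and the rescaled partition functions are meaningfully defined for real $N$, with $Nt\notin\N_0$ handled by interpolation). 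You should either make this extension to real $N$ explicit, or otherwise account for the continuum of $\epsilon$-values demanded by \eqref{eq:key}.
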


\begin{proof}
We know that $\lim_{t\to\infty} f_{t}^{(1)}(N,\hb,\hh) = \fhc(\hb,\hh)$
by Lemma~\ref{th:impex}. Recalling \eqref{eq:truelimit}, 
relation $f^{(3)}\simeq f^{(1)}$ 
can be restated as follows: for all $\hb > 0$,
$\hh \in \R$ and $\eta > 0$ there exists 
$N_0 < \infty$ such that
\begin{equation*}
	\fhc(\hb,\hh - \eta) \le 
	N\,\mathrm{F}\left( \hb \frac{L(N)}{N^{\alpha-\frac{1}{2}}},
	\hh \frac{L(N)}{N^\alpha} \right) \le
	\fhc(\hb,\hh + \eta) \,, \qquad \forall N \ge N_0 \,.
\end{equation*}
Incidentally, this relation holds also when $N \in [N_0,\infty)$ is not an integer,
because the same holds for relation \eqref{eq:truelimit}.
Setting $\epsilon := \frac{1}{N}$ and $\epsilon_0 :=
\frac{1}{N_0}$ yields precisely relation \eqref{eq:key}.
\end{proof}

The rest of this section is devoted to proving $f^{(1)}\simeq f^{(3)}$.
By \eqref{fvdfr}-\eqref{fvcfr} and \eqref{HVCF}, we can write
\begin{equation}\label{eq:f12}
f_{t}^{(1)}(N,\hb,\hh) =\frac{1}{t}\mathbb{E}\left(\log
\E \left[ e^{\mathbf{H}_{t;\hb,\hh}^W(\btau)} \right]\right) \,, \qquad
f_{t}^{(3)}(N,\hb,\hh) =\frac{1}{t}\mathbb{E}\left(\log
\E \left[ e^{\mathrm{H}_{N,t;\hb, \hh}^\omega(\tau/N)} \right]\right) \,.
\end{equation}
Since relation $\simeq$ is transitive, it suffices to prove that
\begin{equation}
f^{(1)}\simeq f^{(2)}\simeq f^{(3)} \,,
\end{equation}
for a suitable intermediate quantity $f^{(2)}$ which somehow interpolates between
$f^{(1)}$ and $f^{(3)}$. We define $f^{(2)}$ replacing the
rescaled renewal $\tau/N$ by the regenerative set $\btau$ in $f^{(3)}$:
\begin{equation}\label{HVDFs}
	f_{t}^{(2)}(N,\hb,\hh) 
	:=\frac{1}{t}\mathbb{E}\left(\log \E \left[
	e^{\mathrm{H}_{N,t; \hb, \hh}^\omega(\btau)} \right]\right) \,.
\end{equation}
Note that each function $f^{(i)}$, for $i=1,2,3$, is of the form 
\begin{equation}\label{Eqfs1}
f_{t}^{(i)}(N,\hb,\hh)=\frac{1}{t}\mathbb{E}
\left( \log\E\left[e^{\mathrm{H}_{N,t;\hb,\hh}^{(i)}}\right] \right),
\end{equation}
for a suitable Hamiltonian $\mathrm{H}_{N,t;\hb,\hh}^{(i)}$.
We recall that $\bbE$ is expectation with respect to the disorder
(either $\omega$ or $W$) while $\E$ is expectation with
respect to the random set (either $\tau/N$ or $\btau$).

\smallskip

The general strategy to
to prove $f^{(i)}\prec f^{(j)}$ can be described as follows ($i=1,j=2$ for clarity).
For fixed $\hb, \hh, \hh'$ with $\hh < \hh'$,
we couple the two Hamiltonians $\mathrm{H}_{N,t;\hb,\hh}^{(1)}$ and 
$\mathrm{H}_{N,t;\hb,\hh'}^{(2)}$ (both with respect to the random set 
and to the disorder) and we define for $\epsilon \in (0,1)$
\begin{equation}\label{eqHam1}
\Delta_{N,\epsilon}^{(1,2)}(t) :=
\mathrm{H}_{N,t;\hb,\hh}^{(1)} - (1-\epsilon)\mathrm{H}_{N,\hb,\hh'}^{(2)}
\end{equation}
(we omit the dependence of $\Delta_{N,\epsilon}^{(1,2)}(t)$ on $\hb,\hh,\hh'$ for short).
H\"older's inequality then gives 
\begin{align*}
&\E\left(e^{\mathrm{H}_{N, t; \hb,\hh}^{(1)}}\right)\leq 
\E\left(e^{\mathrm{H}_{N, t; \hb,\hh'}^{(2)}}\right)^{1-\epsilon}
\E\left(e^{\frac{1}{\epsilon}\Delta_{N,\epsilon}^{(1,2)}(t)}\right)^{\epsilon} \,.
\end{align*} 
Denoting by $\lim^*_{t\to\infty}$ either 
$\liminf_{t\to\infty}$ or $\limsup_{t\to\infty}$
(or, for that matter, the limit of any convergent subsequence), 
recalling \eqref{Eqfs1} and applying Jensen's inequality leads to
\begin{gather*}
\lim^*_{t\to\infty}f_{t}^{(1)}(N,\hb,\hh)\leq 
(1-\epsilon)\lim^*_{t\to\infty}f_{t}^{(2)}(N,\hb,\hh')
+\epsilon \limsup_{t\to\infty}\frac{1}{t}\log \E\mathbb{E}
\left(e^{\frac{1}{\epsilon}\Delta^{(1,2)}_{N,\epsilon}(t)}\right) \,.
\end{gather*}
In order to prove $f^{(1)} \prec f^{(2)}$ it then suffices to show the following:
for fixed $\hb, \hh, \hh'$ with $\hh < \hh'$,
\begin{equation}\label{EqGS}
\begin{split}
\exists \epsilon \in (0,1), \ N_0 \in (0,\infty): \qquad 
&\limsup_{t\to\infty}\frac{1}{t}\log \E\mathbb{E}
\left(e^{\frac{1}{\epsilon}\Delta^{(1,2)}_{N,\epsilon}(t)}\right)\leq 0, 
\quad \forall N \ge N_0 \,.
\end{split}
\end{equation}
(Of course, $\epsilon$ and $N_0$ will depend on the fixed values of $\hb, \hh, \hh'$.)

We will give details only for the proof of $f^{(1)}\prec f^{(2)}\prec f^{(3)}$,
because with analogous arguments one proves 
$f^{(1)}\succ f^{(2)}\succ f^{(3)}$.
Before starting, we describe the coupling of the coarse-grained Hamiltonians.

\begin{remark}\label{rem:piecewise}
For technical convenience, instead of linearly interpolating 
the discrete partition functions when $Ns, Nt \not\in \N_0$,
it will be convenient in \S\ref{SECONDstepSec} to consider their piecewise constant extension
$\Zn(\lfloor Ns \rfloor, \lfloor Nt\rfloor)$.
Plainly, relation \eqref{eq:Sk2} still holds.
\end{remark}

\subsection{The coupling}\label{sec:coupling}

The coarse-grained Hamiltonians $\mathrm{H}$ and $\mathbf{H}$, defined in \eqref{Hm},
are functions of the disorders $\omega$ and $W$ and of the random sets $\tau/N$ and $\btau$.
We now describe how to couple the disorders
(the random sets will be coupled through Radon-Nikodym derivatives, cf.\ \S\ref{SECONDstepSec}).

Recall that $[a,b)^2_\le := \{(x,y): \ a \le x \le y \le b\}$.
For $n\in\N$, we let $\tZ^{(n)}_N$ and $\btZ^{(n)}$ denote the families
of discrete and continuum partition functions with endpoints in $[n,n+1)$:
\begin{equation*}
	\tZ^{(n)}_N := \left( \Zn(Ns,Nt) \right)_{(s,t) \in [n,n+1)^2_\le} \,,
	\qquad
	\btZ^{(n)} :=
	\left( \Z(s,t) \right)_{(s,t) \in [n,n+1)^2_\le} \,.
\end{equation*}
Note that both $(\tZ^{(n)}_N)_{n\in\N}$ and $(\btZ^{(n)})_{n\in\N}$ are i.i.d.\ sequences.
A look at \eqref{Hm} reveals that that the coarse-grained Hamiltonian $\mathrm{H}$ 
depends on the disorder $\omega$ only through $(\tZ^{(n)}_N)_{n\in\N}$,
and likewise $\mathbf{H}$ depends on $W$ only through $(\btZ^{(n)})_{n\in\N}$.
Consequently, to couple $\mathrm{H}$ and $\mathbf{H}$
it suffices to couple $(\tZ^{(n)}_N)_{n\in\N}$ and $(\btZ^{(n)})_{n\in\N}$,
i.e.\ to define a law for the joint sequence $\big((\tZ^{(n)}_N, \btZ^{(n)})\big)_{n\in\N}$.
\emph{We take this to be i.i.d.}: discrete and continuum partition functions are coupled 
independently in each block $[n,n+1)$.

 It remains to define a coupling for $\tZ^{(1)}_N$ and $\btZ^{(1)}$.
Throughout the sequel we fix $\hb > 0$ and $\hh, \hh' \in \R$ with
$\hh < \hh'$. We can then use the coupling provided by Theorem~\ref{PropByProd},
which ensures that relation \eqref{eq:Sk2} holds $\bbP(\dd\omega, \dd W)$-a.s., 
with $T=1$ and $M = \max\{|\hh|,|\hh'|\}$.

\subsection{First step: $f^{(1)}\prec f^{(2)}$}
Our goal is to prove \eqref{EqGS}.
Recalling \eqref{eqHam1}, \eqref{eq:f12} and \eqref{HVDFs}, as well as \eqref{Hm},
for fixed $\hb, \hh, \hh'$ with $\hh < \hh'$ we can write
\begin{equation}\label{Delta2}
\Delta_{N,\epsilon}^{(1,2)}(t)= \mathbf{H}_{t;\hb,\hh}^W(\btau)
- (1-\epsilon)\mathrm{H}_{N,t;\hb,\hh'}^W(\btau) =
\sum_{k=1}^{\mm}\log\frac{\Z(\ss_k,\tt_k)}{\Znt(N\ss_k, N\tt_k)^{1-\epsilon}},
\end{equation}
where we set $h'_N = \hh' L(N)/N^\alpha$ for short, cf.\ \eqref{eqParam}.
Consequently 
\begin{equation}\label{DeltaEq}
\E \mathbb{E}\left(e^{\frac{1}{\epsilon}\Delta_{N,\epsilon}^{(1,2)}(t)}\right)
= \E\left[ \prod_{k=1}^{\mm}f_{N,\epsilon}(\ss_k, \tt_k) \right] \,,
\quad \ \ \text{where} \quad
f_{N,\epsilon}(s,t) := \mathbb{E}
\left[\left(\frac{\Z(s,t)}{\Znt(Ns, Nt)^{1-\epsilon}}
\right)^{\frac{1}{\epsilon}}\right] \,,
\end{equation}
because discrete and continuum partition functions
are coupled independently in each block $[n,n+1)$, cf.\ \S\ref{sec:coupling},
hence the $\bbE$-expectation factorizes. (Of course,
$f_{N,\epsilon}(s,t)$ also depends on $\hb,\hh,\hh'$.)

Let us denote by $\cF_M=\sigma\left((\ss_i,\tt_i):i\leq M\right)$
the filtration generated by the first $M$ visited blocks.
By the regenerative property, the regenerative set $\btau$ starts afresh
at the stopping time $\ss_{k-1}$, hence
\begin{equation} \label{eq:itera}
	\E\big[f_{N,\epsilon}(\ss_k, \tt_k) \,|\, \cF_{k-1}\big] 
	= \E\big[f_{N,\epsilon}(\ss_k, \tt_k) \,|\, \ss_{k-1}, \tt_{k-1}\big] \,,
\end{equation}
where we agree that $\E[\,\cdot\,|\, \ss_0, \tt_0] := \E[\,\cdot\,]$.
Defining the constant
\begin{equation}\label{eqG1sec}
\Lambda_{N,\epsilon} :=
\sup_{k, \ss_{k-1}, \tt_{k-1}} \E\big[f_{N,\epsilon}(\ss_k, \tt_k) \,|\, \ss_{k-1}, \tt_{k-1}\big] \,,
\end{equation}
we have $\E\big[f_{N,\epsilon}(\ss_k, \tt_k) \,|\, \cF_{k-1}\big]  \le \Lambda_{N,\epsilon}$,
hence
$\E\left[ \prod_{k=1}^{M}f_{N,\epsilon}(\ss_k, \tt_k) \right] \le (\Lambda_{N,\epsilon})^M$
for every $M\in\N$, hence
\begin{equation} \label{eq:geomm}
\E \mathbb{E}\left(e^{\frac{1}{\epsilon}\Delta_{N,\epsilon}^{(1,2)}(t)}\right)
= \E\left[ \prod_{k=1}^{\mm}f_{N,\epsilon}(\ss_k, \tt_k) \right] 
\le \sum_{M=1}^\infty \E\left[ \prod_{k=1}^{M}f_{N,\epsilon}(\ss_k, \tt_k) \right]
\le \sum_{M=1}^\infty (\Lambda_{N,\epsilon})^M
= \frac{\Lambda_{N,\epsilon}}{1-\Lambda_{N,\epsilon}} < \infty \,,
\end{equation}
provided $\Lambda_{N,\epsilon} < 1$.
The next Lemma shows that this is indeed the case, if
$\epsilon > 0$ is small enough and $N > N_0(\epsilon)$.
This completes the proof of \eqref{EqGS}, hence of $f^{(1)} \prec f^{(2)}$.

\begin{lemma}\label{LT1}The following relation holds for $\Lambda_{N,\epsilon}$
defined in \eqref{eqG1sec}, with $f_{N,\epsilon}$ defined in \eqref{DeltaEq}:
\begin{equation}\label{eq:lemma1sec}
	\limsup_{\epsilon\to 0} \, \limsup_{N\to\infty} \, \Lambda_{N,\epsilon}=0 \,.
\end{equation}
\end{lemma}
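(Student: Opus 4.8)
The plan is to prove that $\lim_{N\to\infty}\Lambda_{N,\epsilon}$ exists and equals a deterministic quantity $\bar\Lambda_\epsilon$ which is non-increasing and tends to $0$ as $\epsilon\downarrow 0$; since $\Lambda_{N,\epsilon}\ge 0$, this gives \eqref{eq:lemma1sec}. As a preliminary reduction, observe that $f_{N,\epsilon}$ is translation invariant across blocks: since the sequence $(\tZ^{(n)}_N,\btZ^{(n)})_{n\in\N}$ is i.i.d.\ (cf.\ \S\ref{sec:coupling}), for $(s,t)\in[n,n+1)^2_\le$ one has $f_{N,\epsilon}(s,t)=f_{N,\epsilon}(s-n,t-n)$. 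Combining this with the regenerative property of $\btau$ at the stopping time $\ss_{k-1}$, the conditional expectation in \eqref{eqG1sec} depends on $(\ss_{k-1},\tt_{k-1})$ only through their position $(x,y)\in[0,1]^2_\le$ inside their block, and
\[
	\Lambda_{N,\epsilon}\;=\;\sup_{(x,y)\in[0,1]^2_\le}\ \int_{[0,1]^2_\le}f_{N,\epsilon}(u,v)\,\varrho_{x,y}(\dd u\,\dd v)\,,
\]
where $\varrho_{x,y}$ is the law of the local coordinates $(u,v)$ of the first visited block strictly to the right, for the $\alpha$-stable regenerative set $\btau$, i.e.\ precisely the object controlled by Lemma~\ref{th:lemmaP2} (the case $k=1$, for which $\E[\,\cdot\,|\,\ss_0,\tt_0]=\E[\,\cdot\,]$, being analogous and simpler).

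\emph{Step 1: the limit $N\to\infty$, uniformly in $(u,v)$.} Fix $\epsilon\in(0,1)$. By the coupling of Theorem~\ref{PropByProd} (applied with $T=1$ and $M=\max\{|\hh|,|\hh'|\}$), relation \eqref{eq:Sk2} gives, on a full-measure event, $\Znt(Nu,Nv)\to\Zrho(u,v)$ uniformly over $(u,v)\in[0,1]^2_\le$; since $(u,v)\mapsto\Zrho(u,v)$ is continuous and strictly positive on this compact set (Proposition~\ref{th:continuous}), it is bounded below by a random positive constant, whence, writing for short $g_N(u,v):=\bigl(\Z(u,v)\,\Znt(Nu,Nv)^{-(1-\epsilon)}\bigr)^{1/\epsilon}$,
\[
	g_N(u,v)\ \xrightarrow[N\to\infty]{}\ \bar g(u,v):=\Bigl(\frac{\Z(u,v)}{\Zrho(u,v)^{1-\epsilon}}\Bigr)^{1/\epsilon}\qquad\text{a.s., uniformly in }(u,v)\in[0,1]^2_\le\,.
\]
To upgrade this to convergence of $f_{N,\epsilon}(u,v)=\bbE[g_N(u,v)]$ towards $\bar f_\epsilon(u,v):=\bbE[\bar g(u,v)]$, uniformly in $(u,v)$, I would show that $\bigl(\sup_{(u,v)}g_N(u,v)\bigr)_{N\in\N}$ is uniformly integrable, bounding $\sup_{(u,v)}g_N\le\bigl(\sup_{(u,v)}\Z(u,v)\bigr)^{1/\epsilon}\bigl(\inf_{(u,v)}\Znt(Nu,Nv)\bigr)^{-(1-\epsilon)/\epsilon}$ and applying H\"older's inequality together with the positive-moment bound of Corollary~\ref{p1resultscont} and the positive- and negative-moment bounds of Proposition~\ref{p1results}, both uniform in $N$. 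This yields $\sup_{(u,v)\in[0,1]^2_\le}|f_{N,\epsilon}(u,v)-\bar f_\epsilon(u,v)|\to 0$, hence $\lim_{N\to\infty}\Lambda_{N,\epsilon}=\bar\Lambda_\epsilon:=\sup_{(x,y)}\int\bar f_\epsilon(u,v)\,\varrho_{x,y}(\dd u\,\dd v)$ (the supremum is harmless, each $\varrho_{x,y}$ being a probability measure).

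\emph{Step 2: the limit $\epsilon\downarrow 0$.} For $u<v$ and a.e.\ $W$ one has $\Z(u,v)<\Zrho(u,v)$ strictly, since $\hh<\hh'$ and $\hh\mapsto\log\mathbf{Z}^{W,\mathtt{c}}_{\hb,\hh}(u,v)$ is strictly increasing (Proposition~\ref{th:continuous}); writing $r:=\Z(u,v)/\Zrho(u,v)\in(0,1)$ and $Z:=\Zrho(u,v)$, we have $\bar g(u,v)=r^{1/\epsilon}Z$, which is non-increasing as $\epsilon\downarrow 0$ and tends to $0$, while $\bar g(u,v)\le Z$. Hence, by dominated convergence, $\bar f_\epsilon(u,v)\downarrow 0$ as $\epsilon\downarrow 0$ for every $u<v$, the function $\bar f_\epsilon$ is continuous in $(u,v)$, and $\bar f_\epsilon(u,v)\le\bbE[\Zrho(u,v)]=:C'<\infty$ with $C'$ uniform over $[0,1]^2_\le$ (Corollary~\ref{p1resultscont}). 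Fix $\gamma\in(0,1)$. On the compact set $\{(u,v)\in[0,1]^2_\le:v-u\ge\gamma\}$ Dini's theorem applies and gives $\sup_{v-u\ge\gamma}\bar f_\epsilon(u,v)\to 0$ as $\epsilon\downarrow 0$. Splitting the integral defining $\bar\Lambda_\epsilon$ over $\{v-u\ge\gamma\}$ and its complement and using $\varrho_{x,y}(v-u\le\gamma)=\P_x(\tt_2-\ss_2\le\gamma\,|\,\tt_1=y)\le B_\alpha\gamma^\alpha$, uniformly in $(x,y)$, by \eqref{eq:LUC1}, one obtains $\bar\Lambda_\epsilon\le\sup_{v-u\ge\gamma}\bar f_\epsilon+C' B_\alpha\gamma^\alpha$, hence $\limsup_{\epsilon\to 0}\bar\Lambda_\epsilon\le C' B_\alpha\gamma^\alpha$; letting $\gamma\downarrow 0$ proves \eqref{eq:lemma1sec}.

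The delicate point is Step~1: obtaining $f_{N,\epsilon}\to\bar f_\epsilon$ \emph{uniformly} on all of $[0,1]^2_\le$, including pairs $(u,v)$ arbitrarily close to the diagonal or to block edges (where the partition functions degenerate to $1$), which relies on the full strength of the coupling in Theorem~\ref{PropByProd} and on controlling both positive and negative moments of the discrete partition functions uniformly in $N$ (Proposition~\ref{p1results}), as the exponent $1/\epsilon$ amplifies any imprecision. Step~2 is instead where the actual smallness is produced: the strict monotonicity $\Z<\Zrho$ makes $r^{1/\epsilon}Z$ collapse to $0$ away from the diagonal, while the diagonal estimate \eqref{eq:LUC1} shows that the regenerative set spends only $O(\gamma^\alpha)$ of its conditional mass on the region $v-u\le\gamma$ where this collapse fails.
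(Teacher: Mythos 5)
Your proposal is correct and follows essentially the same route as the paper: reduce to one block by translation invariance and the regenerative property, pass $N\to\infty$ via the a.s.\ uniform convergence from the coupling of Theorem~\ref{PropByProd} together with the uniform-in-$N$ positive/negative moment bounds of Proposition~\ref{p1results} and Corollary~\ref{p1resultscont}, and then kill the $\epsilon\to 0$ limit using the strict ordering $\Z<\Zrho$ (from $\hh<\hh'$ and Proposition~\ref{th:continuous}) off the diagonal plus the near-diagonal estimate~\eqref{eq:LUC1} of Lemma~\ref{th:lemmaP2}. The only presentational difference is that the paper first strips the $\Znt^{\epsilon}$ factor by a Cauchy--Schwarz step, so that the resulting ratio $\mathbf{W}\le 1$ simplifies the uniform-integrability bound, whereas you keep the $1-\epsilon$ exponent and control $\bar g=r^{1/\epsilon}\Zrho$ directly; both variants use the same ingredients and are equally valid.
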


The proof of Lemma~\ref{LT1} is deferred to the Appendix~\ref{proofLemmaLT1}.
The key idea is that, for fixed $s < t$,
the function $f_{N,\epsilon}(s,t)$ in \eqref{DeltaEq} is small
when $\epsilon > 0$ small and $N$ large,
because the discrete partition function in the denominator is close to the
continuum one appearing in the numerator, but with $\hh' > \hh$ 
 (recall that the continuum partition function is strictly increasing in $\hh$, by 
Proposition~\ref{th:continuous}).
To prove that $\Lambda_{N,\epsilon}$ in \eqref{eqG1sec} is small,
we replace $s,t$ by the random points $\ss_k, \tt_k$,
showing that they cannot be too close to each other, conditionally on
(and uniformly over) $\ss_{k-1}, \tt_{k-1}$.

\subsection{Second Step: $f^{(2)}\prec f^{(3)}$}\label{SECONDstepSec}

Recalling \eqref{eq:f12} and \eqref{HVCF}-\eqref{Hm},
we can write $f^{(3)}$ as follows:
\begin{equation} \label{eq:exp3}
	f_{t}^{(3)}(N,\hb,\hh) =\frac{1}{t}\mathbb{E}\left(\log
	\E\left[\prod_{k=1}^{\rmm}
	\Zn(N\rss_k, N\rtt_k) \right]\right) \, .
\end{equation}
Note that $f^{(2)}$, defined in \eqref{HVDFs}, enjoys the same
representation \eqref{eq:exp3}, with $\rmm$ and $\rss_k, \rtt_k$ replaced respectively by
their continuum counterparts $\mm$ and $\ss_k, \tt_k$.
 Since we extend the discrete partition function
in a piecewise constant fashion  $\Zn(\lfloor Ns \rfloor, \lfloor Nt\rfloor)$,
cf.\ Remark~\ref{rem:piecewise}, we can replace $\ss_k, \tt_k$ by
their left neighbors $\dss_k, \dtt_k$ on the lattice $\frac{1}{N}\N_0$, i.e.\
\begin{equation} \label{eq:disccont}
	\dss_k := \frac{\lfloor N \ss_k \rfloor}{N} \,, \qquad 
	\dtt_k := \frac{\lfloor N \tt_k \rfloor}{N} \,,
\end{equation}
getting to the following representation for $f_t^{(2)}$:
\begin{equation} \label{eq:exp2}
	f_{t}^{(2)}(N,\hb,\hh) =\frac{1}{t}\mathbb{E}\left(\log
	\E\left[\prod_{k=1}^{\mm}
	\Zn(N\dss_k, N\dtt_k) \right]\right) \, .
\end{equation}

The random vectors $(\mm, (\dss_k,\dtt_k)_{1 \le k \le \dmm})$ and 
$(\rmm, (\rss_k,\rtt_k)_{1 \le k \le \rmm})$ 
are mutually absolutely continuous.
Let us denote by $\RN_t$ 
the Radon-Nikodym derivative
\begin{equation}\label{eq:RADONNIKODYM}
	\RN_t\left(M, (x_k,y_k)_{k=1}^{M}\right) =
	\frac{\P\left(\rmm =M, (\rss_k,\rtt_k)_{k=1}^{m}=(x_k,y_k)_{k=1}^{M}\right)}
	{\P\left(\mm =M, (\dss_k,\dtt_k)_{k=1}^{M}=(x_k,y_k)_{k=1}^{M}\right)} \,,
\end{equation}
for $M\in\N$ and $x_k, y_k \in \frac{1}{N}\N_0$ (note that necessarily $x_1 =0$). 
We can then rewrite \eqref{eq:exp3} as follows:
\begin{equation} \label{eq:exp3bis}
	f_{t}^{(3)}(N,\hb,\hh) =\frac{1}{t}\mathbb{E}\left(\log
	\E\left[\prod_{k=1}^{\mm}\left(\Zn(N\dss_k, N\dtt_k)\right) \cdot 
	\RN_t\left(\mm, (\dss_k,\dtt_k)_{k=1}^{\mm}\right)\right] \right) \,,
\end{equation}
which is identical to \eqref{eq:exp2}, apart from the Radon-Nikodym derivative
$\RN_t$.

Relations \eqref{eq:exp2} and \eqref{eq:exp3bis} are  useful because
$f_{t}^{(2)}$ and $f_{t}^{(3)}$ are averaged with respect to \emph{the same
random set $\btau$} (through its coarse-grained decomposition
$\mm$ and $\dss_k, \dtt_k$). This allows to apply the general strategy
of \S\ref{sec:strate}. 
Defining $\Delta_{N,\epsilon} = \Delta^{(2,3)}_{N,\epsilon}$ as in \eqref{eqHam1},
we can write by \eqref{eq:exp2}-\eqref{eq:exp3bis}
\begin{equation}\label{EqStep2}
\begin{split}
\E\mathbb{E} \left( e^{\frac{1}{\epsilon}\Delta_{N,\epsilon}(t)} \right) =
\E\left[\left\{ \prod_{k=1}^{\mm} \mathbb{E}\left[\left(\frac{\Zn(N\dss_k, N\dtt_k)}
{\Znt(N\dss_k, N\dtt_k)^{1-\epsilon}}\right)^{\frac{1}{\epsilon}}\right]\,\right\}
\, \RN_t\left(\mm, (\dss_k,\dtt_k)_{k=1}^{\mm}\right)^{\frac{1}{\epsilon}}\right],
\end{split}
\end{equation}
and our goal is to prove \eqref{EqGS} with $\Delta^{(1,2)}_{N,\epsilon}$ replaced by
$\Delta_{N,\epsilon}$: explicitly, for fixed $\hb, \hh, \hh'$ with $\hh < \hh'$,
\begin{equation}\label{EqGS23}
\begin{split}
\exists \epsilon \in (0,1), \ N_0 \in (0,\infty): \qquad 
&\limsup_{t\to\infty}\frac{1}{t}\log \E\mathbb{E}
\left(e^{\frac{1}{\epsilon}\Delta_{N,\epsilon}(t)}\right)\leq 0, 
\quad \forall N \ge N_0 \,.
\end{split}
\end{equation}

\smallskip

In order to simplify \eqref{EqStep2},
in analogy with \eqref{DeltaEq}, we define
\begin{equation} \label{eq:tildef}
	\gne(s,t) := \mathbb{E}\left[\left(\frac{\Zn(N s, Nt)}
	{\Znt(Ns, Nt)^{1-\epsilon}}\right)^{\frac{1}{\epsilon}}\right] \, .
\end{equation}
The Radon-Nikodym derivative $\RN_t$ in
\eqref{eq:RADONNIKODYM} does not factorize exactly, but
an approximate factorization holds: as we show in section
\ref{sec:RNSETAPP} (cf. Lemma \ref{th:auxi}),
for suitable functions $r_N$ and $\tilde r_{N}$
\begin{equation} \label{eq:RNfact}
	\RN_t\left(M, (x_k,y_k)_{k=1}^{M}\right) \le
	\left\{\prod_{\ell=1}^M r_N(y_{\ell-1}, x_\ell, y_\ell) \right\}
	\tilde r_{N}(y_M, t) \,,
\end{equation}
where we set $y_0 := 0$ (also note that $x_1 = 0$). Looking back at \eqref{EqStep2}, we can write
\begin{equation}\label{EqStep2b}
\begin{split}
\E\mathbb{E} \left( e^{\frac{1}{\epsilon}\Delta_{N,\epsilon}(t)} \right) \le
\E\left[\left\{ \prod_{k=1}^{\mm} \gne
\left(\dss_k, \dtt_k\right) \, 
r_N\left(\dtt_{k-1}, \dss_k, \dtt_k\right)^\frac{1}{\epsilon} \right\}
\,  \tilde r_{N}\left(\dtt_{\mm}, t
\right)^{\frac{1}{\epsilon}}\right] \,.
\end{split}
\end{equation}

Let us now explain the strategy.
We can easily get rid of the last term $\tilde r_{N}$ by Cauchy-Schwarz, so
we focus on the product appearing in brackets.
The goal would be to prove that \eqref{EqGS23} holds by bounding
\eqref{EqStep2b} through a geometric series, as in \eqref{eq:geomm}.
This could be obtained, in analogy with \eqref{eq:itera}-\eqref{eqG1sec},
by showing that for $\epsilon$ small and $N$ large the conditional expectation
\begin{equation*}
	\E \left[ \left. \gne(\dss_k, \dtt_k) \,
	r_N (\dtt_{k-1}, \dss_k, \dtt_k )^\frac{1}{\epsilon} \right| \cF_{k-1} \right]
	= \E \left[ \left. \gne(\dss_k, \dtt_k) \,
	r_N (\dtt_{k-1}, \dss_k, \dtt_k )^\frac{1}{\epsilon} \right| \ss_{k-1},
	\tt_{k-1} \right]
\end{equation*}
is smaller than $1$, \emph{uniformly in $\ss_{k-1}, \tt_{k-1}$}.
Unfortunately this fails, because the Radon-Nikodym
term $r_N$ is \emph{not} small when $\tt_{k-1}$ is close to the right
end of the block to which it belongs, i.e.\ to $\JJ_{k-1}$. 

To overcome this difficulty, we 
distinguish the two events $\{\tt_{k-1} \le \JJ_{k-1} - \gamma\}$ and
$\{\tt_{k-1} > \JJ_{k-1} - \gamma\}$, for $\gamma > 0$ that will be chosen small enough.
The needed estimates on the functions $\gne$, $r_N$ and $\tilde r_{N}$
are summarized in the next Lemma, proved in Appendix~\ref{sec:RNSETAPP}.
Let us define for $p \ge 1$ the constant
\begin{equation}
\begin{split}\label{eqG1app00}
\Lambda_{N,\epsilon,p} :=
\sup_{k, \ss_{k-1}, \tt_{k-1}} \E \left(\left. 
\gne(\ss_k,\, \tt_k)^p \,\right|\, \ss_{k-1}, \tt_{k-1} \right) \,,
\end{split}
\end{equation}
where we recall that $\gne(s,t)$ is defined in \eqref{eq:tildef}, 
and we agree that $\E[\,\cdot\,|\, \ss_0, \tt_0] := \E[\,\cdot\,]$. 

\begin{lemma}\label{lemma:lemmone1}
Let us fix $\hb \in \R$ and $\hh, \hh' \in \R$ with $\hh < \hh'$.
\begin{itemize}
\item For all $p\geq 1$ 
\begin{equation}\label{eq:point1lemmabv}
\limsup_{\epsilon\to 0} \, \limsup_{N\to\infty} \, \Lambda_{N,\epsilon,p} =0 \,.
\end{equation}

\item For all $\epsilon \in (0,1)$, $p\geq 1$
there is $C_{\epsilon,\, p} < \infty$ such that for all $N\in\N$
\begin{gather}\label{eq:alw}
	\forall k \ge 2: \quad
	\E\left[\left. r_{N}\left(\dtt_{k-1},\, \dss_k,\, \dtt_k \right)^{\frac{p}{\epsilon}}\, 
	\right\vert \, \ss_{k-1}\, \tt_{k-1}\right] \leq C_{\epsilon,p} \,, \\
	\label{eq:alw2}
	\E\left[\tilde{r}_{N}\left(\dtt_{\mm} , t
	\right)^{\frac{p}{\epsilon}} \right]\, \leq C_{\epsilon,p} \,.
\end{gather}

\item For all $\epsilon \in (0,1)$, $p\geq 1$, $\gamma \in (0,1)$ there is
$\tilde N_0= \tilde N_0(\epsilon,\, p,\, \gamma) < \infty$ such that 
for $N\ge \tilde N_0$
\begin{gather}\label{eq:spe}
	\forall k \ge 2: \quad
	\E\left[\left. r_{N}\left(\dtt_{k-1},\, \dss_k,\, \dtt_k \right)^{\frac{p}{\epsilon}}\, 
	\right\vert \, \ss_{k-1}\, \tt_{k-1}\right] \leq 2
	\quad \text{on the event} \ \{\tt_{k-1} \le \JJ_{k-1}-\gamma \} \,,\\
	\label{eq:spe2}
	\E\left[ r_{N}\left(0,\, 0,\, \dtt_1 \right)^{\frac{p}{\epsilon}}\, 
	\right]\, \leq 2 \,.
\end{gather}
%
%
%
%
\end{itemize}
\end{lemma}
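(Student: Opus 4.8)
The three statements have rather different characters. Relation~\eqref{eq:point1lemmabv} is the heart of the Lemma, and the plan is to prove it by combining the coupling of Theorem~\ref{PropByProd} with the moment bounds of Proposition~\ref{p1results} and Corollary~\ref{p1resultscont}. The estimates~\eqref{eq:alw}--\eqref{eq:spe2} on $r_N,\tilde r_N$ will instead follow from the explicit expressions furnished by Lemma~\ref{th:auxi}, together with the local renewal estimates \eqref{assRP1}--\eqref{assRP2} and Lemma~\ref{th:lemmaP2}.

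\textbf{First statement, reduction.} First I would note that $(\Zn(a,b),\Znt(a,b))$ has the same joint law as $(\Zn(0,b-a),\Znt(0,b-a))$, so $\gne(s,t)$ from \eqref{eq:tildef} depends on $(s,t)$ only through $t-s$; write $\phi_{N,\epsilon}(u):=\gne(0,u)$. Since $\hh'>\hh$, the exponent in \eqref{discrcondfor} is pathwise larger with $h_N'$, hence $\Zn(0,Nu)\le\Znt(0,Nu)$ pathwise in $\omega$, so $\Zn/\Znt^{1-\epsilon}=(\Zn/\Znt)\,\Znt^{\epsilon}\le\Znt^{\epsilon}$ and therefore $\phi_{N,\epsilon}(u)\le\bbE[\Znt(0,Nu)]\le C_1$, where $C_1:=\sup_{N}\sup_{0\le t\le1}\bbE[\Znt(0,Nt)]<\infty$ by Lemma~\ref{th:boundsren}, uniformly in $u\in[0,1]$, $\epsilon\in(0,1)$, $N\in\N$. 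Next, for fixed $\gamma\in(0,1)$ I would split the expectation in \eqref{eqG1app00} over $\{\tt_k-\ss_k\le\gamma\}$ and its complement: using $\gne(\ss_k,\tt_k)\le C_1$, the regenerative property of $\btau$, and \eqref{eq:LUC1} (with a direct computation from \eqref{glaw} for $k=1$), this gives, for $N$ large and uniformly in $k,\ss_{k-1},\tt_{k-1}$,
\[
  \E\big[\gne(\ss_k,\tt_k)^p\,\big|\,\ss_{k-1},\tt_{k-1}\big]\ \le\ C_1^{\,p}\,B_\alpha\,\gamma^{\alpha}\ +\ \sup_{\gamma/2\le u\le1}\phi_{N,\epsilon}(u)^p ,
\]
the $\gamma/2$ absorbing the floor corrections $\lfloor N\ss_k\rfloor,\lfloor N\tt_k\rfloor$. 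Since $\gamma$ is arbitrary, it then remains to show that $\lim_{\epsilon\to0}\limsup_{N\to\infty}\sup_{\gamma\le u\le1}\phi_{N,\epsilon}(u)=0$ for each fixed $\gamma>0$.

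\textbf{The key limit.} For this I would invoke the coupling of Theorem~\ref{PropByProd} (with $T=1$, $M=\max\{|\hh|,|\hh'|\}$), under which $\Zn(0,Nu)\to\Z(0,u)$ and $\Znt(0,Nu)\to\Zrho(0,u)$ uniformly in $u\in[0,1]$, $\bbP(\dd\omega,\dd W)$-a.s.. The crucial algebraic observation is that, setting $\xi(u):=\log\Zrho(0,u)-\log\Z(0,u)$,
\[
  \Big(\frac{\Zn(0,Nu)}{\Znt(0,Nu)^{1-\epsilon}}\Big)^{1/\epsilon}=\Big(\frac{\Zn(0,Nu)}{\Znt(0,Nu)}\Big)^{1/\epsilon}\,\Znt(0,Nu)\ \xrightarrow[N\to\infty]{}\ e^{-\xi(u)/\epsilon}\,\Zrho(0,u) ,
\]
uniformly in $u\in[\gamma,1]$ (the continuum partition functions being continuous and, $W$-a.s., bounded away from $0$ on compacts, so the $1/\epsilon$-th power passes to the limit). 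Because $\sup_{u\in[\gamma,1]}(\Zn/\Znt^{1-\epsilon})^{1/\epsilon}\le\sup_{0\le t\le1}\Znt(0,Nt)$ has second moment bounded uniformly in $N$ by Proposition~\ref{p1results}, this family is uniformly integrable, whence $\limsup_{N}\sup_{\gamma\le u\le1}\phi_{N,\epsilon}(u)\le\bbE\big[\sup_{\gamma\le u\le1}e^{-\xi(u)/\epsilon}\,\Zrho(0,u)\big]$. Finally, by Proposition~\ref{th:continuous} the map $\hh\mapsto\log\Z(0,u)$ is pathwise strictly increasing, so $\xi(u)>0$ for all $u>0$, $W$-a.s.; by continuity $\xi_\gamma:=\inf_{u\in[\gamma,1]}\xi(u)>0$ a.s., hence the integrand is $\le e^{-\xi_\gamma/\epsilon}\sup_{0\le t\le1}\Zrho(0,t)\to0$ a.s. as $\epsilon\downarrow0$ and is dominated by $\sup_{0\le t\le1}\Zrho(0,t)\in L^1$ (Corollary~\ref{p1resultscont}); dominated convergence closes the argument.

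\textbf{The remaining estimates, and the main obstacle.} For \eqref{eq:alw}--\eqref{eq:spe2} I would use Lemma~\ref{th:auxi}, by which each factor $r_N(y_{\ell-1},x_\ell,y_\ell)$, resp.\ $\tilde r_N(y,t)$, in \eqref{eq:RNfact} is a bounded multiple of a ratio between the renewal kernel $u(\cdot)$, rescaled by $L(N)N^{1-\alpha}$, and the relevant power of a time increment from the densities \eqref{gdlaw}--\eqref{dlaw}. For \eqref{eq:alw}, \eqref{eq:alw2} I would bound these ratios above via Potter's inequality for $L$ exactly as in \eqref{eq:bounddo}, reaching an expression of the form $(\mathrm{const})\cdot(\mathrm{increment})^{-\eta}$ with $\eta>0$ small; raising to the power $p/\epsilon$ and integrating against the conditional law of $(\ss_k,\tt_k)$ given $(\ss_{k-1},\tt_{k-1})$ — controlled by \eqref{lemmaP2}--\eqref{eq:LUC1}, after again splitting on whether $\tt_{k-1}$ lies within $\gamma$ of $\JJ_{k-1}$ — then yields a finite $C_{\epsilon,p}$. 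For the sharp bounds \eqref{eq:spe}, \eqref{eq:spe2} I would invoke the rate assumption \eqref{assRP2}: on $\{\tt_{k-1}\le\JJ_{k-1}-\gamma\}$, resp.\ for the first visited block (whose entrance point is $0$), all increments entering $r_N$ are bounded below by a positive constant, so the discrete/continuum ratios differ from $1$ by at most $C(\ell/n)^{\delta}=o_N(1)$, hence $r_N\le1+o_N(1)$ and its $(p/\epsilon)$-th conditional moment is $\le2$ once $N\ge\tilde N_0(\epsilon,p,\gamma)$. The hard part is the first statement: the variable $(\Zn/\Znt^{1-\epsilon})^{1/\epsilon}$ has $\epsilon$-dependent integrability, which forces one to send $N\to\infty$ first (using the coupling and the $N$-uniform second moments of Proposition~\ref{p1results}) and only afterwards $\epsilon\to0$ (using pathwise strict monotonicity in $\hh$, Proposition~\ref{th:continuous}, to produce the exponentially small factor $e^{-\xi_\gamma/\epsilon}$ together with an $\epsilon$-free integrable majorant from Corollary~\ref{p1resultscont}); the near-origin splitting at scale $\gamma$, absorbed through Lemma~\ref{th:lemmaP2}, is what rescues the argument from $\phi_{N,\epsilon}(u)$ not being small as $u\downarrow0$.
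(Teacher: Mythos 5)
Your treatment of the first statement \eqref{eq:point1lemmabv} --- the hardest part --- is correct, and in fact more streamlined than the route the paper alludes to: the pathwise domination $\Zn\le\Znt$ (available because both partition functions are discrete with the same $\omega$ and $h_N<h_N'$, unlike in Lemma~\ref{LT1} where the numerator is the continuum object and one resorts to Cauchy--Schwarz) lets you pull out the factor $\Znt$ directly. The coupling of Theorem~\ref{PropByProd}, the $L^2$-boundedness of $\sup_{t\le 1}\Znt(0,Nt)$ from Proposition~\ref{p1results} for uniform integrability, the pathwise strict monotonicity in $\hh$ from Proposition~\ref{th:continuous} giving exponential decay in $1/\epsilon$ on $[\gamma,1]$, and the near-origin splitting controlled by Lemma~\ref{th:lemmaP2} then close the argument as you say.

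The remaining bullets are not as routine as your sketch suggests, and it contains two concrete gaps. For \eqref{eq:alw}--\eqref{eq:alw2}, your proposed split on $\{\tt_{k-1}\le\JJ_{k-1}-\gamma\}$ does not close the argument: on the complementary event, the Potter bound for $r_N(\dtt_{k-1},\dss_k,\dtt_k)$ produces the factor $\bigl(\frac{\ss_k-\tt_{k-1}}{\JJ_{k-1}-\tt_{k-1}}\bigr)^\delta$, whose denominator is unbounded below, and you offer nothing to control it there. The indispensable ingredient, supplied by \eqref{dlaw}, is that $\frac{\ss_k-\tt_{k-1}}{\JJ_{k-1}-\tt_{k-1}}$ is Pareto$(\alpha)$-distributed with law \emph{independent of $\tt_{k-1}$}; together with Lemma~\ref{th:lemmaP2} for the remaining factors $(\JJ_k-\tt_k)^{-\delta}$, $(\tt_k-\ss_k)^{-\delta}$, and the choice $\delta<\min\{\alpha,1-\alpha\}\,\epsilon/(3p)$, this yields a uniform $C_{\epsilon,p}$ with no splitting at all. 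For \eqref{eq:spe}--\eqref{eq:spe2}, your claim that on $\{\tt_{k-1}\le\JJ_{k-1}-\gamma\}$ (resp.\ for the first block) ``all increments entering $r_N$ are bounded below by a positive constant'' is simply false: only $\JJ_{k-1}-\tt_{k-1}\ge\gamma$ is guaranteed, while $\JJ_k-\tt_k$, $\tt_k-\ss_k$ and $\ss_k-\tt_{k-1}$ remain random and unbounded, so $r_N\le 1+o_N(1)$ does not hold on that event alone. One must further restrict to a good event $\{\JJ_k-\tt_k\ge\tilde\gamma,\ \tt_k-\ss_k\ge\tilde\gamma,\ \ss_k-\tt_{k-1}\le T\}$, dispose of its complement via Cauchy--Schwarz against the already-established uniform bound \eqref{eq:alw} combined with Lemma~\ref{th:lemmaP2} and \eqref{dlaw}, and then tune $\eta,\tilde\gamma,T$; note also that the relevant tool is the uniform convergence theorem for slowly varying functions (the bound on $r_N$ is a ratio of $L$-values), not \eqref{assRP2}, which controls $u(\cdot)$.
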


\smallskip

We are ready to estimate \eqref{EqStep2b}, with the goal of proving \eqref{EqGS23}.
Let us define
\begin{equation}\label{Phi31}
\Phi_{k,N}^{(\epsilon)}:=
\gne\left(\dss_k,\, \dtt_k \right)^2\, 
r_N\left(\dtt_{k-1},\, \dss_k,\, \dtt_k \right)^{\frac{2}{\epsilon}},
\end{equation}
with the convention that $\dtt_{0} := 0$
(note that also $\dss_1 =0$). Then, by \eqref{eq:alw2} and Cauchy-Schwarz,
\begin{equation*}
	\E\mathbb{E} \left( e^{\frac{1}{\epsilon}\Delta_{N,\epsilon}(t)} \right) \le
	C_{\epsilon,2} \, \E\left[\prod\limits_{k=1}^{\mm}
	\Phi_{k,N}^{(\epsilon)} \right]
	\le C_{\epsilon,2} \, \sum\limits_{M=1}^{\infty}\E\left[\prod\limits_{k=1}^{M}
	\Phi_{k,N}^{(\epsilon)} \right] \,.
\end{equation*}
We are going to show that
\begin{equation} \label{eq:gogo}
	\exists \epsilon \in (0,1), \ N_0 \in (0,\infty): \qquad 
	\E\left[\prod\limits_{k=1}^{M} \Phi_{k,N}^{(\epsilon)} \right]
	\le \frac{1}{2^M} \quad \forall M\in\N, \ N \ge N_0 \,,
\end{equation}
which yields the upper bound
$\E\mathbb{E} ( e^{\frac{1}{\epsilon}\Delta_{N,\epsilon}(t)} ) \le
C_{\epsilon,2} $, completing the proof of \eqref{EqGS23}.

\smallskip

In the next Lemma, that will be proved in a moment,
we single out some properties of $\Phi_{k,N}^{(\epsilon)}$,
that are direct consequence of Lemma~\ref{lemma:lemmone1}.

\begin{lemma}\label{lemma11111}
One can choose $\epsilon \in (0,1)$, $c \in (1,\infty)$,
$\gamma \in (0,1)$ and $N_0 < \infty$ such that for $N\geq N_0$
\begin{align}\label{Phi1}
\E\left[ \Phi_{1,N}^{(\epsilon)} \right]\leq \, \frac{1}{4} \,; \qquad \
\forall k \ge 2: \ \ \
\E\left[\left.\Phi_{k,N}^{(\epsilon)}\, \right\vert\,  \ss_{k-1},\, \tt_{k-1}\right] 
\leq \begin{cases}
 c & \text{always} \\
\frac{1}{4} & \text{on} \ \{\tt_{k-1} \le \JJ_{k-1}-\gamma \} 
\end{cases} \,,
\end{align}
and moreover
\begin{align}\label{Phi3}
& \E\left[\Phi_{1,N}^{(\epsilon)} 
\ind_{\{\tt_{1} > 1-\gamma\}} \right] 
\leq \frac{1}{8c} \,; \qquad \forall k \ge 2: \ \ \
\E\left[\left.\Phi_{k,N}^{(\epsilon)} 
\ind_{\{\tt_{k} > \JJ_{k}-\gamma \}}\, \right\vert\, \ss_{k-1},\, \tt_{k-1}\right] 
\leq \frac{1}{8c} \,.
\end{align}
\end{lemma}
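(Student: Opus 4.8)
The plan is to read off all four estimates from Lemma~\ref{lemma:lemmone1} by choosing the free parameters in the correct order: first $\epsilon$ small, then $\gamma$ small, then $N_0$ large. The starting point is the Cauchy--Schwarz factorization
\[
	\E\big[\Phi_{k,N}^{(\epsilon)} \,\big|\, \ss_{k-1}, \tt_{k-1}\big]
	\le \E\big[ \gne(\dss_k,\dtt_k)^4 \,\big|\, \ss_{k-1},\tt_{k-1}\big]^{1/2}
	\, \E\big[ r_N(\dtt_{k-1},\dss_k,\dtt_k)^{4/\epsilon} \,\big|\, \ss_{k-1},\tt_{k-1}\big]^{1/2},
\]
so the first factor is controlled by $\Lambda_{N,\epsilon,4}^{1/2}$ (definition \eqref{eqG1app00}) and the second by the $r_N$-estimates \eqref{eq:alw} and \eqref{eq:spe} (with $p=4$). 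By \eqref{eq:point1lemmabv} we may pick $\epsilon\in(0,1)$ so small, and then $N_1<\infty$ so large, that $\Lambda_{N,\epsilon,4}\le \delta$ for $N\ge N_1$, where $\delta>0$ will be chosen at the end to absorb the constants $C_{\epsilon,4}$ from \eqref{eq:alw} and the value $2$ from \eqref{eq:spe}. With $\epsilon$ now fixed, set $c := 1 + \sqrt{\delta\, C_{\epsilon,4}}\in(1,\infty)$; this gives the ``always'' bound in \eqref{Phi1} for all $N\ge N_1$ and every $k\ge 2$, and likewise, using \eqref{eq:spe2}, the bound $\E[\Phi_{1,N}^{(\epsilon)}]\le \sqrt{\delta}\,\sqrt{2}$ for $N$ large, which is $\le 1/4$ once $\delta$ is small. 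On the event $\{\tt_{k-1}\le\JJ_{k-1}-\gamma\}$ we instead use \eqref{eq:spe}, getting $\E[\Phi_{k,N}^{(\epsilon)}\mid\ss_{k-1},\tt_{k-1}]\le\sqrt{\delta}\cdot\sqrt{2}\le 1/4$ for $N\ge \tilde N_0(\epsilon,4,\gamma)$; note that $\gamma$ has not yet been constrained, so any fixed $\gamma\in(0,1)$ works for this part.

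The remaining point is \eqref{Phi3}, and this is where $\gamma$ must be tuned. Here I would insert the indicator $\ind_{\{\tt_k>\JJ_k-\gamma\}}$ and apply Cauchy--Schwarz in the three-factor form
\[
	\E\big[\Phi_{k,N}^{(\epsilon)}\ind_{\{\tt_k>\JJ_k-\gamma\}}\,\big|\,\ss_{k-1},\tt_{k-1}\big]
	\le \E\big[\gne(\dss_k,\dtt_k)^4\big]^{1/4}\,
	\E\big[r_N^{4/\epsilon}\big]^{1/4}\,
	\P\big(\tt_k>\JJ_k-\gamma\,\big|\,\ss_{k-1},\tt_{k-1}\big)^{1/2},
\]
all conditional on $\ss_{k-1},\tt_{k-1}$. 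The first two factors are bounded uniformly in $N\ge N_1$ and in $k,\ss_{k-1},\tt_{k-1}$ by $\delta^{1/4}$ and $C_{\epsilon,4}^{1/4}$ respectively (for $k=1$ use \eqref{eq:spe2} in place of \eqref{eq:alw}); the third factor is bounded by $(A_\alpha\,\gamma^{1-\alpha})^{1/2}$ thanks to the regenerative estimate \eqref{lemmaP2} of Lemma~\ref{th:lemmaP2}, which holds uniformly over $(\ss_{k-1},\tt_{k-1})$ after translating to the block containing $\tt_{k-1}$ — here one uses that $\JJ_k-\JJ_{k-1}\ge 1$ so the relevant coarse-grained step is ``$\tt_2$ relative to $\tt_1$'' in the language of \eqref{lemmaP2}. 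For $k=1$ one uses instead the elementary bound $\P(\tt_1>1-\gamma)\le A_\alpha'\gamma^{1-\alpha}$, which follows from \eqref{glaw} (or directly from \eqref{lemmaP2} with $x=y=0$). Combining, the left side of \eqref{Phi3} is at most $\delta^{1/4}C_{\epsilon,4}^{1/4}A_\alpha^{1/2}\gamma^{(1-\alpha)/2}$, which can be made $\le 1/(8c)$ by choosing $\gamma\in(0,1)$ small enough (recall $c$ is already fixed). Finally set $N_0 := \max\{N_1,\tilde N_0(\epsilon,4,\gamma)\}$ with this $\gamma$, and shrink $\delta$ at the very start if needed so that all of $1/4$, $1/(8c)$ are met simultaneously.

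The main obstacle is the bookkeeping of the order of quantifiers: $\epsilon$ must be chosen before $c$ (since $c$ depends on $C_{\epsilon,4}$), $\gamma$ must be chosen after $c$ (since the target $1/(8c)$ in \eqref{Phi3} involves $c$), and $N_0$ last of all (since $\tilde N_0$ in \eqref{eq:spe} depends on both $\epsilon$ and $\gamma$). One must also be careful that the uniformity in $\ss_{k-1},\tt_{k-1}$ in \eqref{lemmaP2} is genuinely available after the translation/rescaling that reduces the $k$-th coarse-grained step to the two-block estimate of Lemma~\ref{th:lemmaP2}; this uses the regenerative property of $\btau$ and the fact that conditioning on $\cF_{k-1}$ reduces to conditioning on $(\ss_{k-1},\tt_{k-1})$, exactly as in \eqref{eq:itera}. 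No new analytic input beyond Lemmas~\ref{th:lemmaP2} and~\ref{lemma:lemmone1} is required — the content is entirely in arranging the constants.
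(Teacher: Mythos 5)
Your proposal is correct and follows essentially the same route as the paper's proof: fix $\epsilon$ via \eqref{eq:point1lemmabv}, peel off $\gne$ from $r_N$ by Cauchy--Schwarz, use \eqref{eq:alw}/\eqref{eq:spe}/\eqref{eq:spe2} for the $r_N$-factor, and tune $\gamma$ last via Lemma~\ref{th:lemmaP2}, with the quantifier order $\epsilon \to c \to \gamma \to N_0$ exactly as in the paper. One small arithmetic slip: in the three-factor H\"older step for \eqref{Phi3}, since $\Phi_{k,N}^{(\epsilon)} = \gne(\dss_k,\dtt_k)^2\, r_N(\dtt_{k-1},\dss_k,\dtt_k)^{2/\epsilon}$, the $(4,4,2)$-H\"older inequality produces $\|\gne^2\|_4 = \E[\gne^8]^{1/4}$ and $\|r_N^{2/\epsilon}\|_4 = \E[r_N^{8/\epsilon}]^{1/4}$, not the fourth powers you wrote; correspondingly you need $\Lambda_{N,\epsilon,8}$ and $C_{\epsilon,8}$ there, and $N_0$ should also dominate $\tilde N_0(\epsilon,8,\gamma)$. (The paper achieves the same thing in two steps, bounding $\E[(\Phi_{k,N}^{(\epsilon)})^2\mid\cdot]$ by its own Cauchy--Schwarz with $p=2$, then applying a plain Cauchy--Schwarz against the indicator.) This is cosmetic and does not affect the argument, because \eqref{eq:point1lemmabv} and \eqref{eq:alw}/\eqref{eq:spe} hold for every $p\ge 1$.
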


Let us now deduce \eqref{eq:gogo}. 
We fix $\epsilon$, $c$, 
$\gamma$ and $N_0$ as in Lemma~\ref{lemma11111}.
Setting for compactness
\begin{equation*}
	D_{M,N} := \prod\limits_{k=1}^{M} \Phi_{k,N}^{(\epsilon)} \,,
\end{equation*}
we show the following strengthened version of \eqref{eq:gogo}: 
\begin{align}\label{point} 
\E\left[D_{M,N}\right]\leq\, \frac{1}{2^{M}} \,,
\qquad 
\E\left[D_{M,N}
\ind_{\{\tt_{M} > \JJ_{M}-\gamma\}}\right]\leq\, \frac{1}{c 2^{M+2}}\,,
\qquad \forall M \in\N\,, \ N \ge N_0 \,.
\end{align} 
We proceed by induction on $M\in\N$. The case $M=1$ holds by
the first relations in \eqref{Phi1}, \eqref{Phi3}.
For the inductive step, we fix $M \ge 2$
and we assume that \eqref{point} holds for $M-1$, then
\begin{align*}
\nonumber
\E\left[D_{M,N} \right]
& = \E\left[D_{M-1,N} \, \E\left(\left.\Phi_{M,N}^{(\epsilon)} 
\, \right\vert\, \cF_{M-1}\right) \right] 
= \E\left[D_{M-1,N} \, \E\left(\left.\Phi_{M,N}^{(\epsilon)} 
\, \right\vert\, \ss_{M-1},\, \tt_{M-1}\right) \right]\\
&= \E\left[D_{M-1,N} \, 
\E\left(\left.\Phi_{M,N}^{(\epsilon)} \, \right\vert\, \ss_{M-1},\, \tt_{M-1}\right)
\ind_{\{\tt_{M-1} > \JJ_{M-1}-\gamma\}}\right] \\ 
&\qquad+
\E\left[D_{M-1,N} \, \E\left(\left.\Phi_{M,N}^{(\epsilon)}  \, \right\vert\, 
\ss_{M-1},\, \tt_{M-1}\right) \ind_{\{\tt_{M-1} \le \JJ_{M-1}-\gamma\}}
\right] \\
& \le c \, \E\left[D_{M-1,N} 
\ind_{\{\tt_{M-1} > \JJ_{M-1}-\gamma\}}\right] \,+\,
\frac{1}{4} \,\E\left[D_{M-1,N} \right] \leq
c \, \frac{1}{c 2^{M+1}} + \frac{1}{4} \frac{1}{2^{M-1}} \le \frac{1}{2^M} \,,
\end{align*}
where in the last line we have applied \eqref{Phi1} and the induction
step. Similarly, applying the second relation in \eqref{Phi3}
and the induction step,
\begin{align*}
& \E\left[{D_{M,N} }\ind_{\{\tt_{M} > \JJ_{M}-\gamma \}}\right]
=\E\left[\left. D_{M-1,N} \E\left({\Phi_{M,N}^{(\epsilon)} }
\ind_{\{\tt_{M} > \JJ_{M}-\gamma \}} \,\right\vert\, \ss_{M-1},\, \tt_{M-1}\right)\right]
\le \frac{1}{8c}\E\left[D_{M-1,N}\right]
\leq \frac{1}{c 2^{M+2}}.
\end{align*}
This completes the proof of \eqref{point},
hence of \eqref{eq:gogo}, hence of $f^{(2)}\prec f^{(3)}$.

\begin{proof}[Proof of Lemma~\ref{lemma11111}]
We fix $\epsilon > 0$ such that,
by relation \eqref{eq:point1lemmabv}, for some $\hat N_0 < \infty$ one has 
\begin{equation}\label{eq:LLa}
	\Lambda_{N,\epsilon,4p} 	\le \frac{1}{32} \,, \quad \forall N \ge \hat N_0,
	\quad \text{for both } p=1 \text{ and } p=2 \,.
\end{equation}
Given the parameter $\gamma \in (0,1)$, to be fixed later, 
we are going to apply
relations \eqref{eq:spe}-\eqref{eq:spe2}, that hold for $N \ge \tilde N_0(\gamma)$
and for $p \in \{1,2\}$ (we stress that $\epsilon$ has been fixed).
Defining $N_0 := \max\{\tilde N_0(\gamma), \hat N_0\}$, whose value will
be fixed once $\gamma$ is fixed, henceforth we assume that $N \ge N_0$.

Recalling
\eqref{Phi31} and \eqref{eqG1app00}, for $k\ge 2$ and $p \in \{1,2\}$ one has, by Cauchy-Schwarz,
\begin{equation} \label{eq:twoli}
\begin{split}
\E\left[\left.\left(\Phi_{k,N}^{(\epsilon)}\right)^p \, \right\vert\,  
\ss_{k-1},\, \tt_{k-1}\right]^2
 & \leq \E\left[\left.\gne\left(\dss_k,\, \dtt_k\right)^{4p}\, 
 \right\vert\,  \ss_{k-1},\, \tt_{k-1}\right]
 \cdot \\
& \qquad \quad \cdot
\E\left[\left.r_N\left(\dtt_{k-1},\, \dss_k,\, \dtt_k \right)^{\frac{4p}{\epsilon}}\, 
 \right\vert\,  \ss_{k-1},\, \tt_{k-1}\right] \\
 & \le \Lambda_{N,\epsilon,4p} \cdot
 \E\left[\left.r_N\left(\dtt_{k-1},\, \dss_k,\, \dtt_k \right)^{\frac{4p}{\epsilon}}\, 
 \right\vert\,  \ss_{k-1},\, \tt_{k-1}\right] \\
 & \le	\begin{cases}
	\frac{1}{32} \cdot C_{\epsilon, 4p} & \text{always} \\
	\frac{1}{32} \cdot 2 = \frac{1}{4^2} & \text{on }  \ \{\tt_{k-1} \le \JJ_{k-1}-\gamma \} 
	\end{cases} \,
\end{split}
\end{equation}
having used \eqref{eq:LLa}.
Setting $p=1$, the second relation in \eqref{Phi1} holds with 
$c := \sqrt{\frac{C_{\epsilon,4}}{32}}$. The first relation in \eqref{Phi1} is proved
similarly,
setting $\E[\,\cdot\,|\, \ss_0, \tt_0] := \E[\,\cdot\,]$ in \eqref{eq:twoli}
and applying \eqref{eq:spe2}.

Coming to \eqref{Phi3}, by Cauchy-Schwarz
\begin{equation}\label{eq:twoli2}
\begin{split}
\E\left[\left.\Phi_{k,N}^{(\epsilon)} 
\ind_{\{\tt_{k} > \JJ_{k}-\gamma \}}\, \right\vert\, \ss_{k-1},\, \tt_{k-1}\right] ^2 
& \leq \E\left[\left.\left(\Phi_{k,N}^{(\epsilon)}\right)^2 \, \right\vert\, 
\ss_{k-1},\, \tt_{k-1}\right]\cdot \P\left(\left. \tt_k > \JJ_k-\gamma \,
\right\vert\, \ss_{k-1},  \tt_{k-1}\right) \\
& \le \frac{C_{\epsilon,8}}{32} \, \left\{ \sup_{(x,y)\in [0,1)_{\leq}^2}
\P_x\left(\left. \tt_2 > \JJ_2-\gamma \,\right\vert\, \tt_{1}=y\right) \right\} \,,
\end{split}
\end{equation}
having applied \eqref{eq:twoli} for $p=2$, together with the regenerative
property and translation invariance of $\btau$.
By Lemma~\ref{th:lemmaP2}, we can choose $\gamma > 0$ small enough so that the second relation
in \eqref{Phi3} holds (recall that $c > 1$ has already been fixed, as a function of
$\epsilon$ only). The first relation in \eqref{Phi3} holds by similar arguments,
setting $\E[\,\cdot\,|\, \ss_0, \tt_0] := \E[\,\cdot\,]$ in \eqref{eq:twoli2}.
\end{proof}


\section{Proof of Theorem~\ref{Thm1}}
\label{sec:Thm1}
The existence and finiteness of the limit \eqref{CFcE} has been already proved
in Lemma~\ref{th:impex}. The fact that ${\fhc}(\hb,\hh)$ is non-negative
and convex in $\hh$ follows immediately by relation \eqref{eqFreeEnergyIntro}
(which is a consequence of Theorem~\ref{Thm2}, that we have already proved),
because the discrete partition function $\mathrm{F}(\beta,h)$ has these properties.
(Alternatively, one could also give direct proofs of these properties, following
the same path as for the discrete model.)
Finally, the scaling relation \eqref{eq:scalingfe} holds because
$\Zf(ct)$ has the same law as $\Zcf(t)$, by \eqref{eqParam}-\eqref{eq:convd}
(see also \cite[Theorem~2.4]{CRZ14}).

\appendix

\section{Regenerative Set}
\label{sec:rege}

\subsection{Proof of Lemma~\ref{th:lemmaP2}}
\label{sec:lemmaP2}
We may safely assume that $\gamma < \frac{1}{4}$, since for
$\gamma \ge \frac{1}{4}$ relations
\eqref{lemmaP2}-\eqref{eq:LUC1} are trivially satisfied, by choosing
$A_\alpha$, $B_\alpha$ large enough.

We start by \eqref{eq:LUC1},
partitioning on the index $\JJ_2$ of the block containing
$\ss_2$, $\tt_2$ (recall \eqref{eq:JJ}, \eqref{eq:sstt}):
\begin{align*}
\P_{x}\left(\tt_2-\ss_2\leq \gamma \mid \tt_1=y\right)=&
\sum_{n=2}^\infty \P_{x}\left(\tt_2-\ss_2\leq \gamma ,\, 
\JJ_2 =n \mid \tt_1=y\right) \,,
\end{align*}
for $(x,y) \in [0,1]^2_\le$.
Then \eqref{eq:LUC1} 
is proved if we show that there exists $c_{\alpha}\in (0,\infty)$ such that 
\begin{equation}\label{eq:usebou}
p_n(\gamma,x,y) := \P_{x}\left(\tt_2-\ss_2\leq \gamma ,\, 
\JJ_2 =n \mid \tt_1=y\right)
\leq \frac{c_{\alpha}}{n^{1+\alpha}} \, \gamma^{{\alpha}}, \quad
\forall n \ge 2, \ \forall (x,y)\in [0,1]_{\leq}^2.
\end{equation}

Let us write down the density of $(\tt_2,\ss_2, \JJ_2)$ given $\ss_1=x,\, \tt_1=y$.
Writing for simplicity $\mathtt{g}_t := \mathtt{g}_t(\btau)$ and
$\mathtt{d}_t := \mathtt{d}_t(\btau)$, we can write for $(z,w) \in [n-1,n]^2_\le$
\begin{equation*}
\begin{split}
	\P_{x}\left(\left. \ss_2 \in \dd z\, , \tt_2\in \dd w\, , \JJ_2=n \,\right\vert\, 
	\tt_1 =y\right) & = \frac{\P_{x}\left(\mathtt{g}_{1}\in  \dd y, \, 
	\mathtt{d}_1\in \dd z,\, \mathtt{g}_{n}\in \dd w \right)}
	{\P_x(\mathtt{g}_1 \in \dd y)} \\
	& = \frac{\P_{x}\left(\mathtt{g}_{1}\in  \dd y, \, 
	\mathtt{d}_1\in \dd z \right) \P_z\left( \mathtt{g}_{n}\in \dd w \right)}
	{\P_x(\mathtt{g}_1 \in \dd y)} \,,
\end{split}
\end{equation*}
where we have applied the regenerative property at the stopping time $\mathtt{d}_1$.
Then by \eqref{gdlaw}, \eqref{glaw} we get
\begin{equation}\label{glawcon}
\begin{split}
\frac{\P_{x}\left(\left. \ss_2 \in \dd z\, , \tt_2\in \dd w\, , \JJ_2=n \,\right\vert\, 
\tt_1 =y\right)}{ \dd z\, \dd w} =\, &
C_{\alpha}\, \frac{(1-y)^{\alpha}}{(z-y)^{1+\alpha}(w-z)^{1-\alpha}
(n-w)^{\alpha}} \\
& \rule{0pt}{1.1em}\text{for} \quad x \le y \le 1 \,, \ \ n-1 \le z \le w \le n \,.
\end{split}
\end{equation}
where $C_{\alpha}=\frac{\alpha\sin(\pi\alpha)}{\pi}$.
Note that this density is independent of $x$. Integrating over $w$, by
\eqref{glaw} we get
\begin{equation}\label{glawcon2}
\begin{split}
\frac{\P_{x}\left(\left. \ss_2 \in \dd z\, , \JJ_2=n \,\right\vert\, 
\tt_1 =y\right)}{ \dd z} =\, &
\alpha \, \frac{(1-y)^{\alpha}}{(z-y)^{1+\alpha}} \qquad
\text{for} \quad x \le y \le 1 \,, \ \ n-1 \le z \le n \,.
\end{split}
\end{equation}

We can finally estimate $p_n(\gamma,x,y)$ in \eqref{eq:usebou}.
We compute separately the contributions from the
events $\{\ss_2 \le n- \gamma\}$ and $\{\ss_2 > n - \gamma\}$,
starting with the former.
By \eqref{glawcon}
\begin{align}\label{eq:intze}
\begin{split}
&{C_{\alpha}}\, (1-y)^{\alpha} \int_{n-1}^{n-\gamma}
\, \frac{1}{(z-y)^{1+\alpha}}
\left( \int_{z}^{z+\gamma}
\, \frac{1}{(w-z)^{1-\alpha}(n-w)^{\alpha}} \, \dd w  \right)  \dd z\\
& \quad \leq \frac{C_\alpha}{\alpha}\, (1-y)^{\alpha}  \, \gamma^{\alpha}
\int_{n-1}^{n-\gamma}\, \frac{1}{(z-y)^{1+\alpha}}\frac{1}{(n -\gamma-z)^{\alpha}}\dd z \,,\end{split}
\end{align}
because $n-w \ge n-\gamma-z$.
In case $n\ge 3$, since $z-y \ge n-2$ (recall that  $y \in [0,1]$),
\begin{equation} \label{eq:intze2}
	\eqref{eq:intze} \le
	\frac{C_\alpha}{\alpha}\, \gamma^{\alpha} \, \frac{1}{(n-2)^{1+\alpha}}
	\int_{n-1}^{n-\gamma}\, \frac{1}{(n -\gamma-z)^{\alpha}} \, \dd z \, \le \,
	\frac{C_{\alpha}}{\alpha (1-\alpha)} \, \frac{\gamma^{\alpha}}{(n-2)^{1+\alpha}} \,,
\end{equation}
which matches with the right hand side of \eqref{eq:usebou}
(just estimate $n-2 \ge n/3$ for $n\ge 3$).
The same computation works also for $n=2$, provided 
we restrict the last integral in \eqref{eq:intze} on $\frac{3}{2} \le z \le 2-\gamma$,
which leads to \eqref{eq:intze2} with $(n-2)$ replaced by $1/2$.
On the other hand, in case $n=2$ and $1 \le z \le \frac{3}{2}$,
we bound $n-\gamma-z = 2 -\gamma-z \ge \frac{1}{4}$ in \eqref{eq:intze}
(recall that $\gamma < \frac{1}{4}$ by assumption), getting
\begin{equation*}
	\eqref{eq:intze} \le \frac{C_\alpha}{\alpha}\, (1-y)^{\alpha}  \, \gamma^{\alpha}
	\, 4^\alpha \int_1^{\infty} \frac{1}{(z-y)^{1+\alpha}} \, \dd z
	= \frac{C_{\alpha}}{\alpha^2} \, 4^\alpha \, \gamma^{\alpha} < \infty \,.
\end{equation*}
Finally, we consider the contribution
to $p_n(\gamma,x,y)$ of the event $\{\ss_2 > n- \gamma\}$,
i.e.\ by \eqref{glawcon2}
\begin{align*}
\int_{n-\gamma}^{n}\, \alpha\, \frac{(1-y)^{\alpha}}{(z-y)^{1+\alpha}}\dd z\leq\, 
\alpha\,  \frac{\gamma}{(n-\frac{3}{2})^{1+\alpha}} \,,
\qquad \forall n \ge 2 \,,
\end{align*}
because for $y\le 1$ we have $z-y \ge n-\gamma-1 \ge n-\frac{3}{2}$
(recall that $\gamma < \frac{1}{4}$).
Recalling that $\alpha < 1$, this matches with \eqref{eq:usebou},
completing the proof of \eqref{eq:LUC1}.

\smallskip
Next we turn to \eqref{lemmaP2}.
Disintegrating over the value of $\JJ_2$, for $0 \le x \le y \le 1$ we write
\begin{equation*}
	\P_x\left(\left. \tt_2\in [\JJ_2-\gamma,\JJ_2]\,\right\vert\, \tt_{1}=y\right)
	= \sum_{n=2}^\infty 
	\P_x\left(\left. \tt_2\in [n-\gamma,n], \,
	\JJ_2 = n\,\right\vert\, \tt_{1}=y\right)
	=: \sum_{n=2}^\infty q_n(\gamma,x,y) \,.
\end{equation*}
It suffices to prove that there exists $c_\alpha \in (0,\infty)$ such that
\begin{equation}\label{eq:goaa}
	q_n(\gamma,x,y) \le  \frac{c_\alpha}{n^{1+\alpha}} \, \gamma^{1-\alpha} \,,
	\qquad \forall n \ge 2 \,, \ \forall (x,y) \in [0,1]^2_\le \,.
\end{equation}
By \eqref{glawcon} we can write
\begin{equation} \label{eq:inner}
	q_n(\gamma,x,y) = {C_{\alpha}}\, (1-y)^{\alpha} 
	\int_{n-\gamma}^{n}
	\left( \int_{n-1}^{w}
	\frac{1}{(z-y)^{1+\alpha} \, (w-z)^{1-\alpha}} \,  \dd z \right)
	\frac{1}{(n-w)^{\alpha}}  \, \dd w \,.
\end{equation}
If $n \ge 3$ then $z-y \ge n-2$ (since $y \le 1$), which plugged into
in the inner integral yields
\begin{equation} \label{eq:yie}
	q_n(\gamma,x,y) \le {C_{\alpha}}\, \frac{(1-y)^{\alpha}}{(n-2)^{1+\alpha}} \,
	\frac{1}{\alpha} \, \int_{n-\gamma}^{n}\frac{1}{(n-w)^{\alpha}}  \, \dd w
	\le {C_{\alpha}}\, \frac{1}{(n-2)^{1+\alpha}} \,
	\frac{1}{\alpha} \, \frac{\gamma^{1-\alpha}}{(1-\alpha)} \,,
\end{equation}
which matches with \eqref{eq:goaa}, since $n-2 \ge n/3$ for $n\ge 3$.
An analogous estimate applies also for $n=2$, if we restrict the inner
integral in \eqref{eq:inner} to $z \ge n-1+\frac{1}{2} = \frac{3}{2}$,
in which case \eqref{eq:yie} holds with $(n-2)$ replaced by $1/2$.
On the other hand, always for $n=2$, in the range $1 \le z \le \frac{3}{2}$
we can bound $w-z \ge (2-\gamma)-\frac{3}{2} \ge \frac{1}{4}$ 
in the inner integral in \eqref{eq:inner} (recall that $\gamma < \frac{1}{4}$), getting
the upper bound
\begin{equation*}
	\frac{{C_{\alpha}}\, (1-y)^{\alpha}}{(\frac{1}{4})^{1-\alpha}}
	\left( \int_{1}^{\infty}
	\frac{1}{(z-y)^{1+\alpha}} \,  \dd z \right)
	\left( \int_{2-\gamma}^{2}
	\frac{1}{(2-w)^{\alpha}}  \, \dd w \right)
	= \frac{4^{1-\alpha} \, C_\alpha}{\alpha (1-\alpha)} \, \gamma^{1-\alpha} \,.
\end{equation*}
This completes the proof of \eqref{eq:goaa}, hence of Lemma~\ref{th:lemmaP2}.\qed

\subsection{Proof of Lemma~\ref{LT1}}\label{proofLemmaLT1}
Recall the definition \eqref{eqG1sec} of $\Lambda_{N,\epsilon}$. Note that
\begin{equation*}
	\E\big[f_{N,\epsilon}(\ss_k, \tt_k) \,|\, \ss_{k-1}, \tt_{k-1}\big]
	= \E_x \big[f_{N,\epsilon}(\ss_2, \tt_2) \,|\, \tt_1=y\big] 
	\big|_{(x,y) = (\ss_{k-1},\tt_{k-1})} \,,
\end{equation*}
where we recall that $\E_x$ denotes expectation with respect to the
regenerative set started at $x$, 
and $\tt_1$ under $\P_x$ denotes the last visited point of $\btau$
in the block $[n,n+1)$, where $n = \lfloor x \rfloor$, while $\ss_2, \tt_2$ denote
the first and last points of $\btau$ in the next visited block, cf.\ \eqref{eq:JJ}.
Then we can rewrite \eqref{eqG1sec} as
%
\begin{equation}
\begin{split}\label{eqG1app0}
\Lambda_{N,\epsilon}=
\sup_{n\in\N_0} \,
\sup_{(x,y)\in[n,n+1)_{\leq}^{2}}\E_{x}
\left[ \left. f_{N,\epsilon}(\ss_2, \tt_2)\,\right|\, \tt_{1}=y\right] \,.
\end{split}
\end{equation}

We first note that one can set $n=0$ in \eqref{eqG1app0}, by
translation invariance,
because $f_{N,\epsilon}(s+n, t+n) = f_{N,\epsilon}(s, t)$, cf. \eqref{DeltaEq}, and the joint law of 
$\big(\Z(s,t), \Znt(Ns, Nt) \big)_{(s,t) \in [m,m+1)^2_\le}$
does not depend on $m\in\N$, by the choice of the coupling, cf.\ \S\ref{sec:coupling}.
Setting $n=0$ in \eqref{eqG1app0}, we obtain
\begin{equation}
\begin{split}\label{eqG1app}
\Lambda_{N,\epsilon}=
\sup_{(x,y)\in[0,1)_{\leq}^{2}}\E_{x}
\left(\mathbb{E}\left[\left.\left(\frac{\Z(\ss_2,\tt_2)}{\Znt(N\ss_2, N\tt_2)^{1-\epsilon}}\right)^{\frac{1}{\epsilon}}\right]\,\right|\, \tt_{1}=y\right) \,.
\end{split}
\end{equation}

In the sequel we fix $\hb > 0$ and $\hh, \hh' \in \R$ with $\hh'>\hh$ (thus $h_N'>h_N$).
Our goal is to prove that 
\begin{equation}\label{eq:lemma1app5}
\limsup_{\epsilon\to 0} \, \limsup_{N\to\infty}\, \Lambda_{N,\epsilon} =0 \,.
\end{equation}
By Proposition~\ref{p1results}, there exists a constant $C < \infty$ such that
\begin{align*}
\sup_{N\in \mathbb{N}} \, \sup_{0 \le s \le t < \infty: \ |t-s| < 1} 
\mathbb{E}\left[\Znt(Ns, Nt)^2 \right] =
\sup_{N\in \mathbb{N}} \, \sup_{(s,t)\in\, [0,1]_{\leq}^2} 
\mathbb{E}\left[\Znt(Ns, Nt)^2 \right]\, \leq C \,,
\end{align*}
where the first equality holds because the law of $\Znt(Ns, Nt)$ only depends on $t-s$.
If we set
\begin{align} \label{eq:WW}
& W_N(s,t) :=\frac{\Z(s,t)}{\Znt(Ns,Nt)}, \qquad
\mathbf{W}(s,t) :=\frac{\Z(s,t)}{\Zrho(s,t)},
\end{align} 
we can get rid of the exponent $1-\epsilon$ in the denominator of \eqref{eqG1app},
by Cauchy-Schwarz:
\begin{align*}
\mathbb{E}\left[\left(\frac{\Z(s,t)}{\Znt(Ns, Nt)^{1-\epsilon}}\right)^{\frac{1}{\epsilon}}\right]&=
\mathbb{E}\left[\Znt(Ns, Nt)
\, W_N(s,t)^{\frac{1}{\epsilon}}
\right]\leq\,  C^{\frac{1}{2}}\, \mathbb{E}\left[W_N(s,t)^{\frac{2}{\epsilon}}\right]^{\frac{1}{2}} .
\end{align*}
We can then conclude by Jensen's inequality that
\begin{equation} \label{eqG2bis}
	(\Lambda_{N,\epsilon})^2
	\leq C\, \sup_{(x,y)\in[0,1)_{\leq}^{2}} \E_{x}
	\left(\left. \mathbb{E}\left[W_N(\ss_2,\tt_2)^{\frac{2}{\epsilon}}\right]\, 
	\right\vert\,  \tt_{M-1}=y\right) \,,
\end{equation}
and we can naturally split the proof of our goal \eqref{eq:lemma1app5} in two parts:
\begin{align}
\forall \epsilon > 0: \qquad
& \limsup_{N\to\infty} \, (\Lambda_{N,\epsilon})^2 	\leq C\, \sup_{(x,y)\in[0,1)_{\leq}^{2}} \E_{x}
	\left(\left. \mathbb{E}\left[\mathbf{W}(\ss_2,\tt_2)^{\frac{2}{\epsilon}}\right]\, 
	\right\vert\,  \tt_{M-1}=y\right) \,, 
	\label{ac1}\\
&\limsup_{\epsilon\to 0} \left( \sup_{(x,y)\in [0,1)_{\leq}^2} 
\E_{x}\left[\left. \mathbb{E}\left(\mathbf{W}(\ss_2,\tt_2)^{\frac{2}{\epsilon}}\right)\,  
\right\vert\,  \tt_1=y\right] \,\right) =0.\label{ac2}
\end{align}

We start proving \eqref{ac1}.
Let $\epsilon>0$ be fixed. 
It suffices to show that the right hand side of \eqref{eqG2bis} converges to the right hand
side of \eqref{ac1} as $N\to\infty$.
Writing the right hand sides of \eqref{eqG2bis} and \eqref{ac1} respectively as
$C \sup_{(x,y) \in [0,1)^2_\le} g_N(x,y)$ and $C \sup_{(x,y) \in [0,1)^2_\le} 
\mathbf{g}(x,y)$, it suffices to show that
$\sup_{(x,y) \in [0,1]^2_\le} |g_N(x,y) - \mathbf{g}(x,y)| \to 0$ as $N\to\infty$.
Note that
\begin{equation} \label{eq:trel}
\begin{split}
|g_N(x,y) - \mathbf{g}(x,y)| & = \left\vert\, 
\E_{x}\left[\left. \mathbb{E}\left(W_N(\ss_2,\tt_2)^{\frac{2}{\epsilon}}\right)\,  
\right\vert\,  \tt_1=y\right]-\E_{x}\left[\left. \mathbb{E}
\left(\mathbf{W}(\ss_2,\tt_2)^{\frac{2}{\epsilon}}\right)\,  \right\vert\,  
\tt_1=y\right]\, \right\vert\,\\
& \leq \, 
\E_{x}\left[\left. 
\mathbb{E}\left( \left\vert W_N(\ss_2,\tt_2)^{\frac{2}{\epsilon}}
\, -\, \mathbf{W}(\ss_2,\tt_2)^{\frac{2}{\epsilon}} \right\vert \right)  \,
\right\vert\,  \tt_1=y\right] \\
& \le \sup_{n\in\N_0} \, \sup_{(s,t) \in [n,n+1]^2_\le}
 \mathbb{E}\left( \left\vert W_N(s,t)^{\frac{2}{\epsilon}}
\, -\, \mathbf{W}(s,t)^{\frac{2}{\epsilon}} \right\vert \right)  \,,
\end{split}
\end{equation}
where the last inequality holds because $n \le \ss_2 \le \tt_2 \le n+1$ for some integer $n\in\N$.
The joint law of $(W_N(s,t), \mathbf{W}(s,t))_{(s,t) \in [n,n+1]^2_\le}$ does
not depend on $n\in\N$, 
by our definition of the coupling in \S\ref{sec:coupling}, hence
the $\sup_{n\in\N_0}$ in the last line of \eqref{eq:trel} can be dropped,
setting $n=0$. The proof of \eqref{ac1} is thus reduced to showing that
\begin{equation} \label{eq:ttoopp}
\forall \epsilon > 0: \qquad
\lim_{N\to\infty} \, \mathbb{E}\left[\mathtt{S}_N\right]=0 \,,
\qquad \text{with} \quad
\mathtt{S}_N := \sup_{(s,t)\in [0,1]_{\leq}^2}
\left\vert\, W_N(s,t)^{\frac{2}{\epsilon}}
-\mathbf{W}(s,t)^{\frac{2}{\epsilon}}\, \right\vert \,.
\end{equation}

Recall the definition \eqref{eq:WW} of $W_N$ and $\mathbf{W}_N$
and observe that $\lim_{N\to\infty}\mathtt{S}_N = 0$ a.s., because
by construction $\Zn(Ns,Nt)$ converges a.s.\ to $\Z(s,t)$,
uniformly in $(s,t) \in [0,1]^2_\le$, and $\Z(s,t) > 0$
uniformly in $(s,t) \in [0,1]^2_\le$,
by \cite[Theorem 2.4]{CRZ14}.
To prove that $\lim_{N\to\infty} \, \mathbb{E}\left[\, \mathtt{S}_N\, \right]=0$ 
it then suffices to show that $(\mathtt{S}_N)_{N\in\mathbb{N}}$ is bounded in $L^2$
(hence uniformly integrable). To this purpose we observe 
\begin{equation*}
\mathtt{S}_N^2\leq \, 2\sup_{(s,t)\in [0,1]_{\leq}^2} 
W_N(s,t)^{\frac{4}{\epsilon}} + 2\sup_{(s,t)\in [0,1]_{\leq}^2}
\mathbf{W}(s,t)^{\frac{4}{\epsilon}} \,,
\end{equation*}
and note that $\mathbf{W}(s,t)\leq 1$, because $\hh\mapsto \Z(s,t)$ is increasing,
cf. Proposition~\ref{th:continuous}. Finally, the first term has bounded expectation,
by Proposition~\ref{p1results} and Corollary~\ref{p1resultscont}: recalling \eqref{eq:WW},
\begin{equation*}
\sup_{N\in\N}\mathbb{E}\left[\sup_{(u,v)\in [0,1]_{\leq}^2}
W_N(s,t)^{\frac{4}{\epsilon}} \right]\leq
\mathbb{E}\left[\sup_{(u,v)\in [0,1]_{\leq}^2}
{\Z(u,v)}^{\frac{8}{\epsilon}}
\right]^{\frac{1}{2}} \sup_{N\in\N} \mathbb{E}\left[\sup_{(s,t)\in [0,1]_{\leq}^2}
{\Znt(s,t)}^{-\frac{8}{\epsilon}}\right]^{\frac{1}{2}} < \infty \,.
\end{equation*}

\medskip

Having completed the proof of \eqref{ac1}, we focus on \eqref{ac2}.
Let us fix $\gamma > 0$.
In analogy with \eqref{eq:trel}, we can bound the contribution to \eqref{ac2} of the
event $\{\tt_2 - \ss_2 \ge \gamma\}$ by
\begin{equation} \label{eq:chai}
	\sup_{n\in\N_0} \, \sup_{\substack{(s,t)\in [n,n+1]_{\leq}^2\\ |t-s|\geq \gamma}} \,
	\bbE \left[ \mathbf{W}(s,t)^{\frac{2}{\epsilon}} \right] =
	\sup_{\substack{(s,t)\in [0,1]_{\leq}^2\\ |t-s|\geq \gamma}} \,
	\bbE \left[ \mathbf{W}(s,t)^{\frac{2}{\epsilon}} \right] \le
	\bbE \left[ \sup_{\substack{(s,t)\in [0,1]_{\leq}^2\\ |t-s|\geq \gamma}}
	\mathbf{W}(s,t)^{\frac{2}{\epsilon}} \right] \,,
\end{equation}
where the equality holds because the law of $(\mathbf{W}(s,t))_{(s,t) \in [n,n+1]^2_\le}$
does not depend on $n\in\N_0$. Recall that by Proposition~\ref{th:continuous} one has, a.s.,
$\mathbf{W}(s,t) \leq  1$ for all $(s,t)\in (0,1]_{\leq}^2$,
with $\mathbf{W}(s,t) <  1$ for $s < t$. By continuity of $(s,t) \mapsto \mathbf{W}(s,t)$
it follows that also $\sup_{(s,t) \in [0,1]^2_\le: \ |t-s| \ge \gamma} \mathbf{W}(s,t) < 1$, a.s.,
hence the right hand side of \eqref{eq:chai} vanishes as $\epsilon \to 0$,
for any fixed $\gamma > 0$, by dominated convergence. This means that in order
to prove \eqref{ac2} we can focus on the event $\{\tt_2 - \ss_2 < \gamma\}$, and note that
\begin{equation*}
	\sup_{(x,y)\in [0,1)_{\leq}^2} 
	\E_{x}\left[\left. \mathbb{E}\left(\mathbf{W}(\ss_2,\tt_2)^{\frac{2}{\epsilon}}\right)\,  
	\ind_{\{\tt_2 - \ss_2 < \gamma\}}\right\vert\,  \tt_1=y\right] \le
	\sup_{(x,y)\in [0,1)_{\leq}^2} \P_x \left(\left. \tt_2-\ss_2\leq \gamma 
	\,\right\vert \tt_1=y\right) \,,
\end{equation*}
because $\mathbf{W}(s,t) \leq  1$.
Since $\gamma > 0$ was arbitrary, in order to prove \eqref{ac2} it is enough to show that
\begin{equation}\label{eq:lemma1appa2}
	\lim_{\gamma \to 0} \, \sup_{(x,y)\in [0,1)_{\leq}^2} \,
	\P_x \left(\left. \tt_2-\ss_2\leq \gamma 
	\,\right\vert \tt_1=y\right) = 0 \,.
\end{equation}
This is a consequence of
relation \eqref{eq:LUC1} in Lemma~\ref{th:lemmaP2}, which concludes the proof of 
Lemma~\ref{LT1}.\qed

\subsection{Proof of Lemma~\ref{lemma:lemmone1}}\label{sec:RNSETAPP}

We omit the proof of relation \eqref{eq:point1lemmabv}, because it is
analogous to (and simpler than)
the proof of relation \eqref{eq:lemma1sec} in Lemma~\ref{LT1}: compare
the definition of $f_{N,\epsilon}$
in \eqref{DeltaEq} with that of $g_{N,\epsilon}$ in \eqref{eq:tildef}, and the definition of
$\Lambda_{N,\epsilon}$ in \eqref{eqG1sec} with that of
$\Lambda_{N,\epsilon,p}$ in \eqref{eqG1app00} (note that the exponent $p$ in \eqref{eqG1app00}
can be brought inside the $\bbE$-expectation in \eqref{eq:tildef},
by Jensen's inequality).

\smallskip

In order complete the proof of Lemma~\ref{lemma:lemmone1}, we state
an auxiliary Lemma, proved in \S\ref{sec:auxi} below.
Recall that
$\RN_t(M, (x_k,y_k)_{k=1}^{M})$ was defined in \eqref{eq:RADONNIKODYM},
for $t,M\in\N$ and $x_k, y_k \in \frac{1}{N}\N_0$
satisfying the constraints $0 = x_1 \le y_1 < x_2 \le y_2 < \ldots
< x_M \le y_M \le t$. 
Also recall that $L: \N \to (0,\infty)$ 
denotes the slowly varying function appearing in \eqref{assRP0},
and we set $L(0) = 1$ for convenience.

\begin{lemma} \label{th:auxi}
Relation \eqref{eq:RNfact} holds for suitable functions $r_N$, $\tilde r_{N}$,
satisfying the following relations:
\begin{itemize}
\item there is $C \in (0,\infty)$ such that
for all $N\in\N$ and all admissible $y', x, y$, resp.\ $z, t$,
\begin{equation}\label{eq:rNbound}
	r_N(y', x, y) \le C \, \frac{L(N(x-y'))}{L(N(\lceil y' \rceil - y'))}
	\frac{L(N(\lceil y \rceil - y))}{L(N(y-x))} \,, \qquad
	\tilde r_N(z,t) \le C \, \frac{L(N(t-z))}{L(N(\lceil z \rceil-z))} \,;
\end{equation}

\item for all $\eta > 0$ there is $M_0 = M_0(\eta) < \infty$
such that for all $N \in \N$
and for admissible $y', x, y$ 
\begin{align}
	\label{eq:rNbound20}
	r_N(0, 0, y) & \le (1+\eta) \, \frac{L(N(\lceil y \rceil - y))}{L(N y )} \,, \qquad
	\text{if} \quad y \ge \frac{M_0}{N} \,; \\
	\label{eq:rNbound2}
	r_N(y', x, y) & \le (1+\eta) \, \frac{L(N(x-y'))}{L(N(\lceil y' \rceil - y'))}
	\frac{L(N(\lceil y \rceil - y))}{L(N(y-x))} \,, \qquad
	\text{if} \quad y-x \ge \frac{M_0}{N}, \ \ x - y' \ge \frac{M_0}{N} \,.
\end{align}
\end{itemize}
\end{lemma}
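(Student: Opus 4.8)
The plan is to write both the numerator and the denominator of $\RN_t$ in \eqref{eq:RADONNIKODYM} explicitly, extract $r_N$ and $\tilde r_N$ from the resulting ratio, and estimate them from the renewal asymptotics together with the slow variation of $L$. For the numerator, set $a_k:=Nx_k$, $b_k:=Ny_k$, $b_0:=a_1:=0$, and write $u(n)=\P(n\in\tau)$ and $\overline K(m)=\P(\tau_1>m)$. Decomposing the event $\{\rmm=M,(\rss_k,\rtt_k)_{k=1}^M=(x_k,y_k)_{k=1}^M\}$ by the renewal property of $\tau$ --- into the inter-block jumps, the in-block sojourns, and the final ``no return before $t$'' constraint, which is folded into the last increment --- the numerator equals
\[
	u(b_1)\,\Bigl(\prod_{k=2}^M K(a_k-b_{k-1})\,u(b_k-a_k)\Bigr)\,\overline K(Nt-b_M)\,.
\]
For the denominator, the regenerative property of $\btau$ together with the explicit laws \eqref{gdlaw}--\eqref{dlaw} of $\mathtt d_\cdot$ and $\mathtt g_\cdot$ yields the joint density of $(\mm,(\ss_k,\tt_k)_{k=1}^M)$ as a product of transition kernels; after the block-boundary factors telescope, this density, evaluated at $(0,w_1,u_2,w_2,\dots,u_M,w_M)$, equals $\frac{C_\alpha^M}{\alpha}\frac{1}{w_1^{1-\alpha}(t-w_M)^\alpha}\prod_{k=2}^M\frac{1}{(u_k-w_{k-1})^{1+\alpha}(w_k-u_k)^{1-\alpha}}$, so that $\P(\mm=M,(\dss_k,\dtt_k)_{k=1}^M=(x_k,y_k)_{k=1}^M)$ is its integral over the product of the $\tfrac1N$-boxes $[x_k,x_k+\tfrac1N)\times[y_k,y_k+\tfrac1N)$ ($\ss_1\equiv0$ being fixed, so only $w_1$ is free in the first factor).

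Next I would obtain the factorization \eqref{eq:RNfact}. One bounds the iterated box integral in the denominator from below by a product of single-block integrals --- decoupling consecutive blocks by replacing each $u_k-w_{k-1}$ by its largest admissible value and restricting the domain of integration --- and then matches these, block by block, against the discrete factors above. This yields the inequality (rather than an equality) in \eqref{eq:RNfact}, at the cost of multiplicative constants bounded uniformly in $N$, $M$ and the admissible configuration, and it defines $r_N(y_{\ell-1},x_\ell,y_\ell)$ as the ratio of the $\ell$-th discrete factor ($u(b_1)$ for $\ell=1$, and $K(a_\ell-b_{\ell-1})\,u(b_\ell-a_\ell)$ for $\ell\ge2$) to the corresponding continuum single-block integral, and $\tilde r_N(y_M,t)$ as the ratio of $\overline K(Nt-b_M)$ to the last continuum integral. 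Some care is needed in deciding how the overall constant $\frac{C_\alpha^M}{\alpha}$ and the degenerate first block are distributed among the factors, so that each ratio is ``slowly-varying-valued'' rather than carrying a genuine power of $N$.

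Finally I would estimate $r_N$ and $\tilde r_N$. Inserting $u(n)\sim C_\alpha/(L(n)n^{1-\alpha})$ from \eqref{assRP1}, $K(n)\sim L(n)/n^{1+\alpha}$ from \eqref{assRP0}, and $\overline K(m)\sim L(m)/(\alpha m^\alpha)$ (Karamata's theorem for the tail sum, \cite{BGT89}), and evaluating the elementary box integrals, all polynomial factors and the constants $C_\alpha$ cancel; what remains in $r_N(y',x,y)$ is a quotient of values of $L$ at $N(x-y')$, $N(y-x)$, $N(\lceil y'\rceil-y')$ and $N(\lceil y\rceil-y)$ --- the last two boundary arguments coming from the $(\mathtt t-\text{boundary})$-type factors in the $\mathtt d_\cdot$- and $\mathtt g_\cdot$-densities --- times a uniformly bounded correction, which is exactly \eqref{eq:rNbound}; likewise for $\tilde r_N$. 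The sharp bounds \eqref{eq:rNbound20}--\eqref{eq:rNbound2} then follow because, once the relevant gaps exceed $M_0/N$, each of the three error sources --- the asymptotic equivalences just used, the Riemann-sum approximation of a box integral by $N^{-2}$ (resp.\ $N^{-1}$) times the density value, and the variation of $L$ across a box (controlled by slow variation, cf.\ \cite[Theorem 1.5.6]{BGT89}) --- can be made $\le1+\eta$ by choosing $M_0=M_0(\eta)$ large.

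The main obstacle is the regime in which some inter- or intra-block gap, or a distance $\lceil y\rceil-y$ to a block boundary, is of order $1/N$: there the asymptotics for $u$ and $K$ are unavailable (one has only $u(0)=1$ and $u(n)\le C/(L(n)n^{1-\alpha})$ for $n\ge1$, and similar crude bounds for $K$), while the continuum box integral of the ``singular'' kernels $(w-u)^{-(1-\alpha)}$ or $(u-w')^{-(1+\alpha)}$ produces anomalous powers of $1/N$ instead of the naive $1/N$ per variable --- this being finite precisely because $\alpha<1$. Treating all such boundary configurations by hand, and checking that every constant generated along the way is uniform in $N$, in $M$ and in the admissible $(x_k,y_k)$, is where the real work lies; it is also what forces the split into the ``always valid'' crude bound \eqref{eq:rNbound} and the ``sharp for large gaps'' bounds \eqref{eq:rNbound20}--\eqref{eq:rNbound2}.
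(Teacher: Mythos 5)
Your overall plan matches the paper's: factorize the numerator of $\RN_t$ via the renewal property as $u(Ny_1)\prod K\cdot u\cdot \bar K$, express the denominator as the iterated integral over $\frac1N$-boxes of the explicit regenerative-set density, lower-bound that integral by a product of per-block terms via uniform bounds on the integrand, and define $r_N,\tilde r_N$ as per-block ratios to be estimated with the asymptotics of $K$, $u$, $\bar K$. This is exactly the paper's route.

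There is, however, one genuine gap. You assert that the arguments $N(\lceil y'\rceil-y')$ and $N(\lceil y\rceil-y)$ of $L$ in \eqref{eq:rNbound} come ``from the $(\mathtt t-\text{boundary})$-type factors in the $\mathtt d_\cdot$- and $\mathtt g_\cdot$-densities''. But the densities in \eqref{gdlaw}--\eqref{dlaw} are pure power laws and contribute no slowly varying factor whatsoever. If you literally define $r_N(y',x,y)$ (for $\ell\ge2$) as the ratio of $K(N(x-y'))\,u(N(y-x))$ to the corresponding continuum per-block factor, what you get after inserting the renewal asymptotics is $\asymp L(N(x-y'))/L(N(y-x))$. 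This does \emph{not} satisfy \eqref{eq:rNbound}: the discrepancy is precisely the factor $L(N(\lceil y\rceil-y))/L(N(\lceil y'\rceil-y'))$, and that ratio is not bounded below uniformly in the admissible configuration (both arguments range over $(0,1]$ and the ratio of $L$-values can be arbitrarily small by Potter's bounds). The paper's fix is to \emph{insert this ratio artificially} into the definition of $r_N$ and to insert the matching $1/L(N(\lceil z\rceil-z))$ into $\tilde r_N$, so that these extra factors \emph{telescope to $1$} across the product in \eqref{eq:RNfact} (using $y_0=0$, $L(0)=1$). Without recognizing this telescoping device, the $r_N$ you propose would fail the stated bound, and the subsequent estimates in Lemma~\ref{lemma:lemmone1} (which exploit $\lceil y'\rceil-y'\le x-y'$ and $y-x,\lceil y\rceil-y\le1$) would not apply.

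A secondary remark: your worry about ``anomalous powers of $1/N$'' from singular box integrals is largely moot in the paper's argument. The lower bound \eqref{eq:LBint} is obtained by replacing each distance appearing in a denominator of the integrand by its maximum over the box (corner value $+\frac1N$), giving a constant integrand and hence the clean $N^{-(2M-1)}$ volume factor; one never computes the singular integral. Combined with the crude estimates $K(n)\le\mathtt C\,L(n)/(n+1)^{1+\alpha}$, $u(n)\le\mathtt C\,C_\alpha/(L(n)(n+1)^{1-\alpha})$, $\bar K(n)\le\mathtt C\,L(n)/(\alpha(n+1)^\alpha)$ (which hold for all $n\in\N_0$ because of the ``$+1$''), this gives \eqref{eq:rNbound} directly with no special case analysis, and replacing $\mathtt C$ by $1+\eta$ for $n\ge M_0(\eta)$ gives \eqref{eq:rNbound20}--\eqref{eq:rNbound2}.
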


We can now prove relations \eqref{eq:alw}, \eqref{eq:alw2}.
By Potter's bounds \cite[Theorem 1.5.6]{BGT89},
for any $\delta>0$ there is a constant $c_{\delta}>0$ such that 
$L(m)/L(\ell)\leq c_{\delta}\max\big\{\frac{m+1}{\ell+1}, 
\frac{\ell+1}{m+1}\big\}^{\delta}$
for all $m,\ell\in\N_0$ (the ``$+1$''
is because we allow $\ell, m$ to attain the value $0$).
Looking at \eqref{eq:rNbound}-\eqref{eq:rNbound2}, 
recalling that the admissible values of $y', x, y$ are such that
$\lceil y' \rceil - y' \le x - y'$ and $y-x \le 1$,
$\lceil y \rceil - y \le 1$,
we can estimate
\begin{equation*}
\begin{split}
	\frac{L(N(x-y'))}{L(N(\lceil y' \rceil - y'))}
	\frac{L(N(\lceil y \rceil - y))}{L(N(y-x))} & \le c_\delta^2 \,
	\left(\frac{x-y'+ \frac{1}{N}}{\lceil y' \rceil-y' + \frac{1}{N}}\right)^\delta
	\max\left\{ \frac{y-x+ \frac{1}{N}}
	{\lceil y \rceil - y + \frac{1}{N}}, 
	\frac{\lceil y \rceil - y+ \frac{1}{N}}{y-x + \frac{1}{N}} \right\}^\delta \\
	& \le 2^\delta c_\delta^2 \,
	\left(\frac{x-y'+ \frac{1}{N}}{\lceil y' \rceil-y' + \frac{1}{N}}\right)^\delta
	\frac{1}{(\lceil y \rceil - y + \frac{1}{N})^\delta}
	\,\frac{1}{(y-x + \frac{1}{N})^\delta} \,.
\end{split}
\end{equation*}
We now plug in $y' = \dss_{k-1}$, $x = \dss_k$, $y = \dtt_k$
(so that $\lceil y' \rceil = \JJ_{k-1}$ and $\lceil y \rceil = \JJ_{k}$).
The first relation in \eqref{eq:rNbound} then yields
\begin{equation*}
\begin{split}
	r_N\left( \dtt_{k-1}, \dss_k, \dtt_k \right) & \le C \, 2^\delta c_\delta^2 \,
	\left(\frac{\dss_k-\dtt_{k-1}+ \frac{1}{N}}{\JJ_{k-1} - \dtt_{k-1} + \frac{1}{N}}\right)^\delta
	\frac{1}{(\JJ_k - \dtt_k+ \frac{1}{N})^\delta}
	\, \frac{1}{(\dtt_k - \dss_k+ \frac{1}{N})^\delta} \\
	& \le C \, 2^\delta c_\delta^2 \,
	\left(\frac{\ss_k-\tt_{k-1}}{\JJ_{k-1} - \tt_{k-1}}\right)^\delta
	\frac{1}{(\JJ_k - \tt_k)^\delta}
	\,\frac{1}{(\tt_k - \ss_k)^\delta} \,,
\end{split}
\end{equation*}
where the last inequality holds by monotonicity,
since
$\dss_k \le \ss_k$, $\dtt_i \le \tt_i$ for $i=k-1, k$ and
$\dtt_k-\dss_k + \frac{1}{N} \ge \tt_k - \ss_k$ by definition \eqref{eq:disccont}.
Setting $C'_\delta := C \,
2^\delta c_\delta^2$, by the regenerative property
\begin{equation*}
	\E\left[\left. r_{N}\left(\dtt_{k-1},\, \dss_k,\, \dtt_k \right)^{\frac{p}{\epsilon}}\, 
	\right\vert \, \ss_{k-1}\, \tt_{k-1}\right] \leq 
	\left(C'_\delta \right)^{\frac{p}{\epsilon}} \,
	\E_x\left[\left. \left(\frac{\ss_2-y}{1 - y}
	\right)^{\frac{\delta p}{\epsilon}} 	\frac{1}{(\JJ_2 - \tt_2)^{\frac{\delta p}{\epsilon}}}
	\,\frac{1}{(\tt_2 - \ss_2)^{\frac{\delta p}{\epsilon}}} \right| \tt_1 = y \right] \,,
\end{equation*}
with $(x,y) = (\ss_{k-1},\tt_{k-1})$. Since
$\E[XYZ] \le (\E[X^3] \E[Y^3] \E[Z^3])^{1/3}$ by H\"older's inequality, we split
the expected value in the right hand side in three parts, estimating each term separately. 

First, given $x, y \in [n,n+1)$ for some $n\in\N$, then
$\tt_1 = \mathtt{g}_n(\btau)$ and $\ss_2 = \mathtt{d}_n(\btau)$, hence by \eqref{dlaw}
\begin{equation*}
	\E_x\left[\left. \left(\frac{\ss_2-y}{1 - y}
	\right)^{\frac{3\delta p}{\epsilon}} \right| \tt_1=y \right] =
	\E_x\left[\left. \left(\frac{\mathtt{d}_n(\btau)-y}{1 - y}
	\right)^{\frac{3\delta p}{\epsilon}} \right| \mathtt{g}_n(\btau)=y \right]
	= \int_{n}^\infty \left(\frac{v-y}{n - y}
	\right)^{\frac{3\delta p}{\epsilon}} \, \frac{(n-y)^\alpha}{(v-y)^{1+\alpha}}
	\, \dd v \,,
\end{equation*}
and the change of variable $z := \frac{v-y}{n-y}$ yields
\begin{equation} \label{eq:C1}
	\E_x\left[\left. \left(\frac{\ss_2-y}{1 - y}
	\right)^{\frac{3\delta p}{\epsilon}} \right| \tt_1=y \right] =
	\int_{1}^\infty z^{\frac{3\delta p}{\epsilon}-1-\alpha}	\, \dd z
	= \frac{1}{\alpha - \frac{3\delta p}{\epsilon}}
	=: C_1 < \infty \,,
	\qquad \text{if} \quad \delta < \frac{\alpha \epsilon}{3p} \,.
\end{equation}
Next, since $\E[X^{-a}] = \int_0^\infty \P(X^{-a} \ge t) \, \dd t
= \int_0^\infty \P(X \le t^{-1/a}) \, \dd t$ for any random variable $X \ge 0$,
\begin{equation} \label{eq:C2}
\begin{split}
	& \E_x\left[\left. \frac{1}{(\JJ_2 - \tt_2)^{\frac{3\delta p}{\epsilon}}}
	\right| \tt_1=y \right] = \int_0^\infty \P_x\left( \left.
	\JJ_2 - \tt_2 \le \gamma^{-\frac{\epsilon}{3\delta p}} \right| \tt_1=y \right) \,
	\dd \gamma \\
	& \qquad \qquad \le A_\alpha \int_0^\infty 
	\min\{1,\gamma^{-(1-\alpha)\frac{\epsilon}{3\delta p}}\} \, \dd \gamma 
	=: C_2 < \infty \,, \qquad
	\text{if} \quad \delta < \frac{(1-\alpha)\epsilon}{3p} \,,
\end{split}
\end{equation}
having used \eqref{lemmaP2}. Analogously, using \eqref{eq:LUC1},
\begin{equation} \label{eq:C3}
\begin{split}
	& \E_x\left[\left. \frac{1}{(\tt_2 - \ss_2)^{\frac{3\delta p}{\epsilon}}}
	\right| \tt_1=y \right] \le B_\alpha \int_0^\infty 
	\min\{1,\gamma^{- \alpha\frac{\epsilon}{3\delta p}}\} \, \dd \gamma 
	=: C_3 < \infty \,, \qquad
	\text{if} \quad \delta < \frac{\alpha\epsilon}{3p} \,.
\end{split}
\end{equation}
In conclusion, given $\epsilon \in (0,1)$ and $p \ge 1$, if we fix
$\delta < \min\{\alpha, 1-\alpha\}\frac{\epsilon}{3p}$, 
by \eqref{eq:C1}-\eqref{eq:C2}-\eqref{eq:C3}
there are constants $C_1, C_2, C_3 < \infty$
(depending on $\epsilon, p$) such that for all $N\in\N$ and $k \ge 2$
\begin{equation} \label{eq:C123}
	\E\left[\left. r_{N}\left(\dtt_{k-1},\, \dss_k,\, \dtt_k \right)^{\frac{p}{\epsilon}}\, 
	\right\vert \, \ss_{k-1}\, \tt_{k-1}\right] \leq \left(C'_\delta\right)^{\frac{p}{\epsilon}}
	\left(C_1 \, C_2 \, C_3\right)^{1/3}
	=: C_{\epsilon,p} < \infty \,,
\end{equation}
which proves \eqref{eq:alw}. Relation \eqref{eq:alw2} is proved
with analogous (and simpler) estimates, using the second relation
in \eqref{eq:rNbound}.

\smallskip

Finally, we prove relations \eqref{eq:spe}-\eqref{eq:spe2},
exploiting the upper bound \eqref{eq:rNbound2}
in which we plug $y' = \dss_{k-1}$, $x = \dss_k$, $y = \dtt_k$
(recall that $\lceil y' \rceil = \JJ_{k-1}$ and $\lceil y \rceil = \JJ_{k}$).
We recall that, by the uniform convergence theorem of slowly
varying functions \cite[Theorem 1.2.1]{BGT89},
$\lim_{N\to\infty} L(Na)/L(Nb) = 1$ \emph{uniformly for $a,b$
in a compact subset of $(0,\infty)$}. It follows by \eqref{eq:rNbound2}
that for all $\eta > 0$ and for all
$\gamma, \tilde\gamma \in (0,1)$, $T \in (0,\infty)$ there
is $\hat N_0 = \hat N_0(\gamma, \tilde \gamma, \eta,T) < \infty$ such that
for all $N \ge \hat N_0$ and for $k\ge 2$
\begin{gather*}
	r_N\left( \dtt_{k-1}, \dss_k, \dtt_k \right) \le (1+\eta)^2 \\
	\text{on the event} \ \ \ \big\{ \JJ_{k-1} - \tt_{k-1} \ge \gamma \big\}
	\cap \big\{ \JJ_k- \tt_k \ge \tilde\gamma\,, \ 
	\tt_k - \ss_k \ge \tilde\gamma\,,  \ \ss_k - \tt_{k-1} \le T \big\} \,.
\end{gather*}
Consequently, on the event $\{\JJ_{k-1} - \tt_{k-1} \ge \gamma\}
= \{\tt_{k-1} \le \JJ_{k-1}-\gamma\}$ we can write
\begin{equation*}
\begin{split}
	& \E\left[\left. r_{N}\left(\dtt_{k-1},\, \dss_k,\, \dtt_k \right)^{\frac{p}{\epsilon}}\, 
	\right\vert \, \ss_{k-1}\, \tt_{k-1}\right] \\
	& \qquad \le
	(1+\eta)^{\frac{2p}{\epsilon}} +
	\E\left[\left. r_{N}\left(\dtt_{k-1},\, \dss_k,\, \dtt_k \right)^{\frac{p}{\epsilon}}\, 
	\ind_{\{\JJ_k - \tt_k \ge \tilde\gamma, \, \tt_k - \ss_k \ge \tilde\gamma, \,
	\ss_k - \tt_{k-1} \le T\}^c}
	\right\vert \, \ss_{k-1}\, \tt_{k-1}\right] \\
	& \qquad \le (1+\eta)^{\frac{2p}{\epsilon}} +
	\sqrt{ C_{\epsilon, 2p} \,
	\P_x \big( \{\JJ_2 - \tt_2 \ge \tilde\gamma, \, \tt_2 - \ss_2 \ge \tilde\gamma, \,
	\ss_2 - y \le T\}^c  \,\big|\,  \tt_{1} = y \big) } \,,
\end{split}
\end{equation*}
where in the last line we have applied Cauchy-Schwarz,
relation \eqref{eq:C123}
and the regenerative property, with $(x,y) = (\ss_{k-1},\tt_{k-1})$. Since
for $x, y \in [n,n+1)$ one has
$\tt_1 = \mathtt{g}_n(\btau)$ and $\ss_2 = \mathtt{d}_n(\btau)$, by \eqref{dlaw}
\begin{equation*}
	\P_x(\ss_2 - y > T \,|\, \tt_1 = y) = \P_x ( \mathtt{d}_n(\btau) > T+y
	\,|\, \mathtt{g}_n(\btau) = y)
	= \int_{y+T}^\infty \frac{(n-y)^\alpha}{(v-y)^{1+\alpha}}
	\, \dd v 
	\le \frac{1}{\alpha \, T^\alpha} \,,
\end{equation*}
because $n-y \le 1$. Applying relations \eqref{lemmaP2}-\eqref{eq:LUC1},
we have shown that for $N \ge \hat N_0$ and $k \ge 2$,
on the event $\{\tt_{k-1} \le \JJ_{k-1}-\gamma\}$ we have the estimate
\begin{equation} \label{eq:quasispe}
	\E\left[\left. r_{N}\left(\dtt_{k-1},\, \dss_k,\, \dtt_k \right)^{\frac{p}{\epsilon}}\, 
	\right\vert \, \ss_{k-1}\, \tt_{k-1}\right] \le
	(1+\eta)^{\frac{2p}{\epsilon}} +
	\sqrt{ C_{\epsilon, 2p} \, \left( A_\alpha \tilde\gamma^{1-\alpha}
	+ B_\alpha \tilde\gamma^{\alpha} + \alpha^{-1} \, T^{-\alpha}
	\right)} \,.
\end{equation}

We can finally fix $\eta$, $\tilde\gamma$ small enough
and $T$ large enough (depending only on $\epsilon$ and $p$)
so that the right hand side of \eqref{eq:quasispe}
is less than $2$. This proves relation \eqref{eq:spe},
for all $\epsilon \in (0,1)$, $p\ge 1$, $\gamma \in (0,1)$,
with $\tilde N_0(\epsilon, p, \gamma) 
:= \hat N_0(\gamma, \tilde \gamma, \eta, T)$.
Relation \eqref{eq:spe2} is proved similarly, using  \eqref{eq:rNbound20}.\qed

\subsection{Proof of Lemma~\ref{th:auxi}}
\label{sec:auxi}

We recall that the random variables $\rss_k, \rtt_k, \rmm$
in the numerator of \eqref{eq:RADONNIKODYM} refer to the rescaled renewal
process $\tau/N$, cf.\ Definition~\ref{def:Jstm}. By \eqref{assRP0}-\eqref{assRP1}, we can write
the numerator in \eqref{eq:RADONNIKODYM},
which we call $\mathtt{L}_{M}$,  as follows:
for $0 = x_1 \le  y_1 < x_2 \le y_2 < \ldots < x_M \le y_M < t$, with
$x_i, y_i \in \frac{1}{N}\N_0$,
\begin{equation} \label{eq:rendisc}
	\mathtt{L}_{M}=u\big(Ny_1\big) \left( \prod_{i=2}^M K\big(N(x_i - y_{i-1})\big) 
	\, u\big(N(y_i - x_i)\big) \right)
	\bar K\big(N(t-y_M)\big) \,,
\end{equation}
where we set $\bar K(\ell) := \sum_{n > \ell} K(n)$.
Analogously,  using repeatedly \eqref{gdlaw} and the regenerative property,
the denominator in \eqref{eq:RADONNIKODYM}, which we call $\mathtt{I}_{M}$, 
can be rewritten as
\begin{gather}\label{eq:int}
	\mathtt{I}_{M} := \idotsint\limits_{\substack{u_i \in [x_i, x_i + \frac{1}{N}], \, 2 \le i \le M \\
	v_i \in [y_i, y_i+\frac{1}{N}], \, 1 \le i \le M}}
	\frac{C_{\alpha}}{v_1^{1-\alpha}}
	\left( \prod_{i=2}^M 
	\frac{C_{\alpha} \, \ind_{\{u_i < v_i\}}}{(u_i-v_{i-1})^{1+\alpha} \, (v_i - u_i)^{1-\alpha}}
	 \right) \frac{1}{\alpha\, (t-v_M)^\alpha} \, 
	 \dd v_1 \, \dd u_2 \, \dd v_2 \cdots \dd u_M \, \dd v_M \,.
\end{gather}
Bounding uniformly 
\begin{equation}\label{eq:unifbound}
	u_i - v_{i-1} \le x_i - y_{i-1} + \tfrac{1}{N}, \qquad
	v_i - u_i \le y_i - x_i + \tfrac{1}{N}, \qquad
	t-v_M \le t-y_M+\tfrac{1}{N} \,,
\end{equation}
we obtain a lower bound for $\mathtt{I}_{M}$ which is factorized
as a product over blocks:
\begin{equation} \label{eq:LBint}
\begin{split}
	& \frac{1}{N^{2M-1}} \,
	\frac{C_{\alpha}}{(x_1 + \frac{1}{N})^{1-\alpha}}
	\left( \prod_{i=2}^M 
	\frac{C_{\alpha}}{(x_i-y_{i-1}+\frac{1}{N})^{1+\alpha} \, 
	(y_i - x_i + \frac{1}{N})^{1-\alpha}}
	 \right) \frac{1}{\alpha\, (t-y_M)^\alpha} \\
	 & \quad \ = 
	 \frac{C_{\alpha}}{(Nx_1 + 1)^{1-\alpha}}
	\left( \prod_{i=2}^M 
	\frac{C_{\alpha}}{(N(x_i-y_{i-1})+1)^{1+\alpha} \, 
	(N(y_i - x_i)+1)^{1-\alpha}}
	 \right) \frac{1}{\alpha\,(N(t-y_M)+1)^\alpha} \,.
\end{split}
\end{equation}
Looking back at \eqref{eq:rendisc} and
recalling \eqref{eq:RADONNIKODYM}, 
 it follows that relation \eqref{eq:RNfact} holds with
\begin{align*} 
	r_N(y',x,y) & := (N(x-y')+1)^{1+\alpha} \, K\big(N(x - y')\big) \,
	\frac{(N(y - x)+1)^{1-\alpha}}{C_\alpha} \,
	u\big(N(y - x)\big) \,
	\left\{ \frac{L\big(N(\lceil y\rceil - y)\big)}{L\big(N(\lceil y'\rceil - y')\big)}
	\right\} \,, \\
	\tilde r_N(z,t) & := \alpha \, (N(t-z)+1)^{\alpha} \, 
	\frac{\bar K\big(N(t - z)\big)}{L\big(N(\lceil z\rceil - z)\big)} \,,
\end{align*}
where we have ``artificially'' added the last terms inside the brackets, which
get simplified telescopically when one considers the product in \eqref{eq:RNfact}.
(In order to define $r_N(y',x,y)$ also when $y' = x = 0$, which is necessary
for the first term in the product in \eqref{eq:RNfact},
we agree that $K(0) := 1$.)

Recalling \eqref{assRP0} and \eqref{assRP1}, there is some constant 
$\mathtt{C} \in (1, \infty)$ such that for all $n\in\N_0$
\begin{equation} \label{eq:basicest}
	K(n) \le \mathtt{C} \frac{L(n)}{(n+1)^{1+\alpha}} \,, \qquad 
	\bar K(n) \le \mathtt{C} \frac{L(n)}{\alpha (n+1)^{\alpha}} \,, \qquad
	u(n) \le \mathtt{C} \frac{C_\alpha}{L(n) \, (n+1)^{1+\alpha}} \,.
\end{equation}
Plugging these estimates into the definitions of $r_N$, $\tilde r_N$
yields the first and second relations in \eqref{eq:rNbound}, with $C = \mathtt{C}^2$
and $C = \mathtt{C}$, respectively.
Finally, given $\eta > 0$ there is $M_0 = M_0(\eta) < \infty$ such that
for $n \ge M_0$ one can replace $\mathtt{C}$ by $(1+\eta)$ in \eqref{eq:basicest},
which yields \eqref{eq:rNbound20} and \eqref{eq:rNbound2}.\qed

\begin{remark}\rm
To prove $f^{(1)}\prec f^{(2)}$ we have shown that it is possible to give an upper bound, 
cf. \eqref{eq:RNfact}, for the Radon-Nikodym derivative $\RN_t$ by suitable functions 
$r_N$ and $\tilde r_N$ satisfying Lemma~\ref{lemma:lemmone1}.
Analogously, to prove the complementary step $f^{(3)}\prec f^{(2)}$,
that we do not detail, one would need an analogous upper bound for 
the \emph{inverse} of the Radon-Nikodym derivative, i.e.
\begin{equation} \label{eq:RNfact1}
	\RN_t\left(M, (x_k,y_k)_{k=1}^{M}\right)^{-1} \le
	\left\{\prod_{\ell=1}^M q_N(y_{\ell-1}, x_\ell, y_\ell) \right\}
	{ \tilde q_{N}(y_M, t)} \,,
\end{equation}
for suitable functions $q_N$ and $\tilde q_{N}$ that
satisfy conditions similar to $r_N$ and $\tilde r_{N}$ in 
Lemma~\ref{th:auxi}, thus yielding an analogue of Lemma~\ref{lemma:lemmone1}. 
To this purpose, 
we need to show that the multiple integral $\mathtt{I}_M$ admits an upper bound given by 
a suitable factorization, analogous to \eqref{eq:LBint}. The natural idea is to use uniform bounds that 
are complementary to \eqref{eq:unifbound},  i.e.\
$u_i - v_{i-1} \ge x_i - y_{i-1} - \frac{1}{N}$ etc., which  
work when the distances like $x_i - y_{i-1}$ are at least  $\frac{2}{N}$. When some of such distances 
is $0$ or  $\frac{1}{N}$, the integral must be estimated by hands. This is based on routine 
computations, for which we refer to \cite{cf:T15}.
\end{remark}


\section{Miscellanea}
\label{sec:misc}

\subsection{Proof of Lemma~\ref{th:improve}}
\label{sec:improve}

We start with the second part: assuming \eqref{eq:distconc},
we show that \eqref{assD2} holds. Given $n\in\N$ and a convex
$1$-Lipschitz function $f:\R^n \to \R$,
the set $A := \{\omega\in\R^n: \ f(\omega) \le a\}$ is convex, for all $a\in\R$,
and $\{f(\omega) \ge a+t\} \subseteq \{d(\omega,A) \ge t\}$, because $f$ is $1$-Lipschitz.
Then by \eqref{eq:distconc}
\begin{equation} \label{eq:apply}
	\bbP(f(\omega) \le a) \, \bbP(f(\omega) \ge a+t) \le
	\bbP(\omega \in A) \bbP(d(\omega,A) \ge t)
	\le C_1' \exp \left(-\frac{t^\gamma}{C_2'}\right) \,.
\end{equation}
Let $M_f \in \R$ be a median for $f(\omega)$,
i.e.\ $\bbP(f(\omega) \ge M_f) \ge \frac{1}{2}$ and $\bbP(f(\omega) \le M_f) \ge \frac{1}{2}$.
Applying \eqref{eq:apply} for $a=M_f$ and $a=M_f-t$ yields
\begin{equation*}
	\bbP\Big( \big|f(\omega) - M_f \big| \ge t \Big) \le 4 \, C_1' 
	\exp \left(-\frac{t^\gamma}{C_2'}\right) \,,
\end{equation*}
which is precisely our goal \eqref{assD2}.

\smallskip

Next we assume \eqref{assD2} and we show that \eqref{eq:distconc} holds.
We actually prove a stronger statement: for any $\eta \in (0,\infty)$
\begin{equation}\label{eq:bet}
	\bbP(\omega \in A)^\eta \, \bbP(d(\omega, A) > t) \le
	C_1^{1+\eta} \, \exp \bigg(-\frac{\epsilon_\eta \, t^\gamma}{C_2} \bigg) \,,
	\quad \text{with} \quad
	\epsilon_\eta := 
 \frac{\eta}{(1+\eta^{\frac{1}{\gamma-1}})^{\gamma-1}}.
\end{equation}
In particular, choosing $\eta = 1$, \eqref{eq:distconc} holds
with $C_1' := C_1^2$ and $C_2' = 2^{(\gamma-1)^+} C_2$.\qed

If $A$ is convex, the function $f(x) := d(x,A)$ is convex, $1$-Lipschitz and also $M_f \geq 0$, hence
by \eqref{assD2}
\begin{align} \label{eq:use}
	\bbP(\omega \in A) & =
	\bbP(f(\omega) \le 0)
	\le \bbP(|f(\omega) - M_f| \ge M_f) \le C_1 \exp \bigg(-\frac{M_f^\gamma}{C_2}\bigg) \,, \\
	\label{eq:use2}
	\bbP(d(\omega, A) > t)
	& \le \bbP(|f(\omega) - M_f| > t-M_f)
	\le C_1 \exp \bigg(-\frac{(t-M_f)^\gamma}{C_2}\bigg) \,,
	\quad \forall t \ge M_f \,,
\end{align}
hence for every $\eta \in (0,\infty)$ we obtain
\begin{equation}\label{eq:use3}
	\bbP(\omega \in A)^\eta \, \bbP(d(\omega, A) > t) \le
	C_1^{1+\eta} \, \exp \bigg(-\frac{1}{C_2}
	\big(\eta\, M_f^\gamma +  (t-M_f)^\gamma \big)\bigg) \,,
	\quad \forall t \ge M_f \,.
\end{equation}

The function $m \mapsto \eta \, m^\gamma +  (t-m)^\gamma$ is convex and,
by direct computation, it attains its minimum in the interval $[0, t]$.
at the point $m = \bar m := t / (1+\eta^{1/(\gamma-1)})$. Replacing $M_f$ by
$\bar m$ in \eqref{eq:use3} yields
precisely \eqref{eq:bet} for all $t \ge M_f$. 

It remains to prove \eqref{eq:bet} for $t \in [0,M_f)$. This follows by \eqref{eq:use}:
\begin{equation*}
	\bbP(\omega \in A)^\eta \, \bbP(d(\omega, A) > t) \le
	\bbP(\omega \in A)^\eta \le C_1^\eta \exp \bigg(-\frac{\eta \, M_f^\gamma}{C_2}\bigg)
	\le C_1^{1+\eta} \, \exp \bigg(-\frac{\epsilon_\eta \, t^\gamma}{C_2} \bigg)
	\quad \ \text{for } \ t \le M_f \,,	
\end{equation*}
where the last inequality holds because $\eta \ge \epsilon_\eta$ (by \eqref{eq:bet}) and $C_1 \ge 1$
(by \eqref{assD2}, for $t=0$).\qed

\subsection{Proof of Proposition~\ref{th:conve}}
\label{sec:conve}
By convexity, $f(\omega) - f(\omega') \le \langle \nabla f(\omega), \omega-\omega'\rangle
\le |\nabla f(\omega)| \, |\omega-\omega'|$  for all $\omega, \omega' \in \R^n$, 
where $\langle \cdot, \cdot \rangle$
is the usual scalar product in $\R^n$. Defining the convex set $A := \{\omega \in \R^n:
\ f(\omega) \le a-t\}$, we get
\begin{equation*}
	f(\omega) \le a-t + |\nabla f(\omega)| \, |\omega-\omega'| \,, \qquad
	\forall \omega \in \R^n, \ \forall \omega' \in A \,,
\end{equation*}
hence $f(\omega) \le a-t + |\nabla f(\omega)| \, d(\omega, A)$ for all 
$\omega \in \R^n$. Consequently, by inclusion of events and \eqref{eq:distconc},
\begin{equation*}
	\bbP(f(\omega) \ge a, \, |\nabla f(\omega)| \le c) \le
	\bbP(d(\omega, A) \ge t/c) \le
	\frac{C_1'}{\bbP(\omega \in A)} \,	\exp \left(-\frac{(t/c)^\gamma}{C_2'}\right) \,.
\end{equation*}
Since $\bbP(\omega \in A) = \bbP(f(\omega) \le a-t)$
by definition of $A$, we have proved \eqref{eq:keycon}.\qed

\section*{Acknowledgements}

We thank Rongfeng Sun and Nikos Zygouras for fruitful discussions.

\bibliographystyle{amsplain}
\bibliography{biblioT.bib}

\end{document}